\newif\ifshowtikz
\let\oldtikzcd\tikzcd
\let\oldendtikzcd\endtikzcd
\renewenvironment{tikzcd}{%
    \ifshowtikz\expandafter\oldtikzcd%
    \else\comment%
    \fi
}{%
    \ifshowtikz\oldendtikzcd%
    \else\endcomment%
    \fi
}
\let\oldtikzpicture\tikzpicture
\let\oldendtikzpicture\endtikzpicture
\renewenvironment{tikzpicture}{%
    \ifshowtikz\expandafter\oldtikzpicture%
    \else\comment%
    \fi
}{%
    \ifshowtikz\oldendtikzpicture%
    \else\endcomment%
    \fi
}
\newcolumntype{C}{>{$}c<{$}}
\renewcommand{\P}{\mathbb{P}}
\newcommand{\R}{\mathbb{R}}
\newcommand{\C}{\mathbb{C}}
\newcommand{\Z}{\mathbb{Z}}
\newcommand{\Cone}{\operatorname{Cone}}
\newcommand{\gr}{\operatorname{gr}}
\newcommand{\Tw}{\operatorname{Tw}}
\newcommand{\Perf}{\operatorname{Perf}}
\newcommand{\Hom}{\operatorname{Hom}}
\newcommand{\Ext}{\operatorname{Ext}}
\newcommand{\End}{\operatorname{End}}
\renewcommand{\hom}{\operatorname{hom}}
\newcommand{\Proj}{\operatorname{Proj}}
\newcommand{\Split}{\operatorname{\Pi}}
\newcommand{\diff}{\mathrm{d}}
\newcommand{\fac}{w}
\newcommand{\fact}{\check{w}}
\newcommand{\obj}[2]{{{}^{#2}K_{#1}}}
\newcommand{\vc}[2]{{{}^{#2}V_{#1}}}
\newcommand{\vcpr}[2]{{{}^{#2}V^\mathrm{pr}_{#1}}}
\newcommand{\w}{\mathbf{w}}
\newcommand{\wt}{\check{\mathbf{w}}}
\renewcommand{\phi}{\varphi}
\newcommand{\eps}{\varepsilon}
\newcommand{\xt}{\check{x}}
\newcommand{\yt}{\check{y}}
\theoremstyle{plain}
\newtheorem{mthm}{Theorem}
\newtheorem{mconj}{Conjecture}
\newtheorem{thm}{Theorem}[section]
\newtheorem{lem}[thm]{Lemma}
\newtheorem{prop}[thm]{Proposition}
\theoremstyle{remark}
\newtheorem{ex}[thm]{Example}
\newtheorem{rmk}[thm]{Remark}
\title{Homological Berglund--H\"ubsch mirror symmetry for curve singularities}
\address{Department of Mathematics\\ University College London\\ Gower Street\\ London\\ WC1E  6BT}
\email{m.habermann.17@ucl.ac.uk}
\address{St John's College\\ Cambridge\\ CB2 1TP}
\email{j.smith@dpmms.cam.ac.uk}
\author{Matthew Habermann and Jack Smith}
\begin{document}
\begin{abstract}
Given a two-variable invertible polynomial, we show that its category of maximally-graded matrix factorisations is quasi-equivalent to the Fukaya--Seidel category of its Berglund--H\"ubsch transpose.  This was previously shown for Brieskorn--Pham and $D$-type singularities by Futaki--Ueda.  The proof involves explicit construction of a tilting object on the B-side, and comparison with a specific basis of Lefschetz thimbles on the A-side.
\end{abstract}
\maketitle

\section{Introduction}

\subsection{Berglund--H\"ubsch mirror symmetry}
\label{BHMirrorSymmetry}

Suppose $f : \C^n \rightarrow \C$ is a polynomial with an isolated singularity at the origin.  This paper is concerned with two $A_\infty$-categories one can naturally associate to such an object: the Fukaya--Seidel category $\mathcal{F}(f)$ as defined in \cite{SeidelBook} (the `A-model'), which categorifies the intersections of vanishing cycles in the Milnor fibre of a Morsification of $f$; and the (dg-)category $\mathrm{mf}(\C^n, f)$ of matrix factorisations of $f$ (the `B-model').  Mirror symmetry predicts that for certain pairs of singularities the A-model of one is equivalent to the B-model of the other (after taking some symmetries into account), and vice versa, and our main result confirms this conjecture for curve singularities ($n=2$).

More precisely, given an $n\times n$ matrix $A$ with non-negative integer entries $a_{ij}$, one can define a polynomial $\w$ in $\C[x_1, \dots, x_n]$ by
\[
\w = \sum_{i=1}^n \prod_{j=1}^n x_j^{a_{ij}}.
\]
The \emph{Berglund--H\"ubsch transpose} of $\w$, denoted $\wt$, is then defined by
\[
\wt = \sum_{i=1}^n \prod_{j=1}^n \xt_j^{a_{ji}}.
\]
A polynomial is called \emph{invertible} if it is quasi-homogeneous and of the form $\w$ for some matrix $A$ with non-zero determinant, such that both $\w$ and $\wt$ have isolated singularities at the origin.

Quasi-homogeneity means that there exist positive integral weights $d_1, \dots, d_n$ and $h$ such that
\[
\w(t^{d_1}x_1, \dots, t^{d_n}x_n) = t^h\w(x_1, \dots, x_n)
\]
for all $t$ in $\C^*$.  The \emph{maximal symmetry group} $\Gamma_\w$ of $\w$ is defined by
\[
\Gamma_\w = \{(t_1, \dots, t_n, t_{n+1}) \in (\C^*)^{n+1} : \w(t_1x_1, \dots, t_nx_n) = t_{n+1}\w(x_1, \dots, x_n)\}.
\]
This group acts on $\C^n$ in the obvious way, and we consider the category $\mathrm{mf}(\C^n, \Gamma_\w, \w)$ of matrix factorisations which are equivariant with respect to this group action.  This is equivalent to considering \emph{graded} matrix factorisations with respect to the maximal grading group for which $\w$ is homogeneous, namely the abelian group $L$ freely generated by elements $\vec{x}_1, \dots, \vec{x}_n$ (the degrees of $x_1, \dots, x_n$ respectively) and $\vec{c}$ (the degree of $\w$) modulo the relations
\[
\sum_{j=1}^n a_{ij}\vec{x}_j = \vec{c} \quad \text{for all } i.
\]
The prediction of mirror symmetry is then:

\begin{mconj}
\label{MainConjecture}
For any invertible polynomial $\w$ there is a quasi-equivalence of pretriangulated $A_\infty$-categories
\[
\mathrm{mf}(\C^n, \Gamma_\w, \w) \simeq \mathcal{F}(\wt).
\]
\end{mconj}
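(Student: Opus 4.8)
The plan is to establish the conjecture for all invertible $\w$ by reducing it, via a Thom--Sebastiani argument, to the case of \emph{atomic} polynomials, and then running the tilting-object-versus-Lefschetz-basis comparison on each atom. By the Kreuzer--Skarke classification, after permuting the variables every invertible polynomial is a direct sum $\w = \w^{(1)}\oplus\dots\oplus\w^{(r)}$ in disjoint blocks of variables, each $\w^{(k)}$ being a Fermat $x^{a}$, a chain $x_1^{a_1}x_2 + \dots + x_{m-1}^{a_{m-1}}x_m + x_m^{a_m}$, or a loop $x_1^{a_1}x_2 + \dots + x_m^{a_m}x_1$. Transposition respects this decomposition --- $\wt = \wt^{(1)}\oplus\dots\oplus\wt^{(r)}$, with each summand again Fermat, chain, or loop --- and the maximal grading group $L_\w$ is the amalgam of the $L_{\w^{(k)}}$ along the common class $\vec{c}$ (equivalently, $\Gamma_\w$ is the fibre product of the $\Gamma_{\w^{(k)}}$ over the target $\C^*$). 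So it suffices to (i) prove the conjecture when $\w$ is atomic, and (ii) show that both sides send a direct sum $\w^{(1)}\oplus\w^{(2)}$ to the corresponding amalgamated tensor product of categories.

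For step (ii) on the B-side, $\mathrm{mf}(\C^n,\Gamma_\w,\w)$ should be the idempotent completion of the $L_\w$-graded tensor product of the $\mathrm{mf}(\C^{n_k},\Gamma_{\w^{(k)}},\w^{(k)})$; this is the graded, equivariant refinement of the classical Thom--Sebastiani and Kn\"orrer-periodicity statements for matrix factorisations (Orlov, Dyckerhoff, Preygel), and the only real content is careful bookkeeping with the grading group. For step (ii) on the A-side one needs the K\"unneth theorem $\mathcal{F}(\wt^{(1)}\oplus\wt^{(2)})\simeq\mathcal{F}(\wt^{(1)})\otimes\mathcal{F}(\wt^{(2)})$ in a form that keeps track of the extra grading by the quasi-homogeneous symmetry group, so that it matches the amalgamated grading on the B-side. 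This is where I expect the principal obstacle to lie: realising the Milnor fibre, a distinguished basis of thimbles, and the directed $A_\infty$-structure of a Thom--Sebastiani sum in a sufficiently functorial way, and threading the correct equivariant gradings through the whole argument.

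For step (i) I would follow exactly the template of the two-variable case treated here. On the B-side: construct an explicit tilting object $T$ --- a finite sum of $L$-grading shifts of the rank-one, Koszul-type matrix factorisations built from the monomials of $\w$ --- arranged so that $T$ split-generates $\mathrm{mf}(\C^n,\Gamma_\w,\w)$ and the grading forces $\Hom^{>0}(T,T)=0$, and compute $\End^\bullet(T)$ as an explicit graded directed algebra with prescribed arrows and relations; this is classical for Fermat and a direct, if combinatorially heavier, computation for chains and loops of any length. On the A-side: choose a distinguished basis of Lefschetz thimbles for a Morsification of $\wt$ with explicitly drawn vanishing cycles --- in the expected $A_m$-type or cyclic configuration for chains and loops --- and compute the directed $A_\infty$-algebra $\mathcal{A}$ of the resulting exceptional collection.

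It then remains to match the two sides. The underlying quivers and relations of $\mathcal{A}$ and of $\End^\bullet(T)$ coincide, and both algebras are concentrated in non-negative degree with a single generating degree, so a degree count (or a formality argument, or an explicit comparison of the finitely many potentially nonzero higher products) promotes the isomorphism of cohomology algebras to an $A_\infty$-quasi-equivalence, establishing the conjecture for atomic $\w$; combined with the two Thom--Sebastiani/K\"unneth statements above, compatibly with the amalgamation of grading groups, this gives it for all invertible $\w$. Beyond the A-side K\"unneth theorem already flagged, the remaining delicate point is verifying that the generators and relations genuinely correspond for \emph{every} chain and loop length: the two-variable analysis handles only the two-variable atomic types (Brieskorn--Pham, and chains and loops of length two), so new input is needed for the longer atomic pieces.
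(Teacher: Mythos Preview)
The statement you are asked to prove is the paper's \emph{Conjecture}, not a theorem: the paper does not prove it in general. What the paper establishes is Theorem~1, the case $n=2$, by a direct and uniform computation on each of the three two-variable types (Brieskorn--Pham, chain, loop): on the B-side an explicit full exceptional collection whose endomorphism algebra is a degree-zero quiver algebra, and on the A-side an explicit distinguished basis of vanishing cycles in the Milnor fibre whose directed Fukaya category matches that quiver. There is no Thom--Sebastiani reduction in the paper's argument; the two-variable Brieskorn--Pham case is handled on its own, not as a tensor product of one-variable Fermat pieces.

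Your proposal is therefore not a proof but a programme for the general conjecture, and you are candid about this. The gaps you flag are the right ones, and they are substantial open problems rather than routine extensions. First, a graded/equivariant K\"unneth theorem for Fukaya--Seidel categories of Thom--Sebastiani sums is not available in the literature in the form you need; partial results exist (and Futaki--Ueda do handle sums of Brieskorn--Pham and type $D$ pieces), but a clean general statement compatible with the amalgamated $L$-grading is missing. Second, the atomic chain and loop cases in $m\geq 3$ variables are genuinely harder than the two-variable case treated here: the Milnor fibre is now $(m-1)$-complex-dimensional, the vanishing cycles are Lagrangian $(m-1)$-spheres rather than curves, and the explicit control of parallel transport, intersections, and holomorphic polygons that drives the paper's argument has no obvious analogue. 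The B-side for chains has since been addressed by Hirano--Ouchi and Aramaki--Takahashi (as the paper notes), but matching the A-side for higher atoms remains the essential difficulty. So your outline is a reasonable roadmap, but it does not constitute a proof, and the paper itself makes no claim beyond $n=2$.
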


\begin{rmk}
Our Fukaya categories are all implicitly completed with respect to cones.
\end{rmk}

This conjecture appears in \cite{FutakiUedaBrieskornPhamProceedings}, which explains some of the background on mirror symmetry for Landau--Ginzburg models.  See also \cite[Conjecture 1.2]{LekiliUeda}, and references therein.  The underlying construction of mirror pairs via the transpose operation originated with Berglund--H\"ubsch \cite{BerglundHubsch}, and was later extended by Krawitz \cite{Krawitz}, who replaced the $\Gamma_\w$ on the left-hand side of \cref{MainConjecture} with a subgroup.  This requires the introduction of a `transpose' group on the right-hand side, but to make this precise one would need a rigorous definition of an orbifold Fukaya--Seidel category which is currently out of reach \cite[Problem 3]{FutakiUedaBrieskornPhamProceedings}.

Recall that the \emph{derived category of singularities} of a stack $X_0$ is defined to be the quotient
\[
D^b_\mathrm{sing}(X_0) \coloneqq D^b(X_0) / \Perf(X_0)
\]
of the derived category of coherent sheaves on $X_0$ by the category of perfect complexes (those complexes quasi-isomorphic to complexes of vector bundles).  Orlov \cite[Theorem 39]{Orlov09} showed that when $X_0$ is a hypersurface in a regular scheme, its singularity category can be expressed in terms of matrix factorisations of the defining equation.  This can be extended to stacks \cite[Proposition 3.19]{PolishchukVaintrobSingularityCategoriesForStacks} and in our setting we obtain an equivalence of triangulated categories
\begin{equation}
\label{HMFStacksingEquivalence}
\mathrm{HMF}(\C^2, \Gamma_\w, \w) \rightarrow D^b_\mathrm{sing}([\w^{-1}(0) / \Gamma_\w]),
\end{equation}
where $\mathrm{HMF}(\C^2, \Gamma_\w, \w)$ denotes the cohomology category of $\mathrm{mf}(\C^2, \Gamma_\w, \w)$.  \Cref{MainConjecture} therefore relates the algebraic geometry of the singularity $\w$ to the symplectic topology of the singularity $\wt$.

Our main result is:

\begin{mthm}
\label{Thm1}
\Cref{MainConjecture} holds when $n=2$, i.e.~for curve singularities.
\end{mthm}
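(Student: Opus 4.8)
The plan is to realise each side as the perfect derived category of an explicit $A_\infty$-algebra and then identify the two algebras. First one reduces to models: up to permuting and rescaling the variables, every two-variable invertible polynomial is of \emph{Fermat} type $x^p+y^q$, \emph{chain} type $x^py+y^q$, or \emph{loop} type $x^py+xy^q$, and the Berglund--H\"ubsch transpose preserves each family (Fermats and loops go to themselves, a chain to the chain with the roles of $p$ and $q$ exchanged). Since $\wt$ has an isolated singularity, a Morsification together with a distinguished system of vanishing paths yields a full collection of Lefschetz thimbles $L_1,\dots,L_\mu$ generating $\mathcal F(\wt)$; by Seidel's theory $\mathcal F(\wt)\simeq\Perf(\mathcal A_{\wt})$, where $\mathcal A_{\wt}=\bigoplus_{i\le j}\hom(L_i,L_j)$ is the directed $A_\infty$-algebra of the collection. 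The Brieskorn--Pham case and the chains with $q=2$ (the $D$-type singularities) are Futaki--Ueda's, so the new content is in the general chain and loop cases, but it is cleanest to run the three families in parallel.

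On the B-side I would write down, for each family, a generator $T=\bigoplus_i T_i$ of $\mathrm{mf}(\C^2,\Gamma_\w,\w)$ whose summands are twists, by a carefully chosen finite set of characters of $\Gamma_\w$, of the stabilised residue field (equivalently the Koszul matrix factorisation of $\w$ coming from writing $\w$ in terms of its monomial summands), and, where convenient, of the structure sheaves of the components of $\w^{-1}(0)$; these are small, explicit matrix factorisations, and such twists generate by a standard argument. One then computes the graded morphism spaces $\Hom^\bullet_{\mathrm{mf}}(T_i,T_j)$ directly from the factorisations: they are finite dimensional, and with the correct ordering of the summands they vanish in the wrong direction, so $T$ is a tilting object. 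Its $L$-graded endomorphism algebra $\End^\bullet(T)$ is then the path algebra of an explicit quiver with relations (and a priori some higher $A_\infty$-products, read off from the factorisations), and one finds that it has $\mu$ vertices, matching the Milnor number of $\wt$. A degree/gap argument should show $\End^\bullet(T)$ is formal, so that $\mathrm{mf}(\C^2,\Gamma_\w,\w)\simeq\Perf\bigl(\End^\bullet(T)\bigr)$.

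What remains is to identify $\End^\bullet(T)$ with $\mathcal A_{\wt}$, and this is the main obstacle. I would match the $\mu$ vertices with the $\mu$ thimbles, the arrows with the degree-one Floer morphisms coming from the intersections of the vanishing cycles of $\wt$, and the relations with the vanishing of the corresponding holomorphic-triangle products. Carrying this out requires a concrete grip on the Milnor fibre of $\wt$ and its vanishing cycles in the chain and loop cases, where one can exploit that these singularities admit very transparent Morsifications -- for a loop the Milnor fibre is essentially a cyclic branched cover of a disc, on which the vanishing cycles and their intersection pattern can be drawn explicitly -- so that the Floer products reduce to counting immersed polygons. Two further points need care: matching the $L$-grading on the matrix factorisation side with the $\Z$-grading on thimbles, which passes through the weight system of $\wt$; and fixing signs and orientations consistently on the two sides. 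Once these are settled and the minimal model of $\mathcal A_{\wt}$ is shown to be $\End^\bullet(T)$, passing to perfect modules gives $\mathrm{mf}(\C^2,\Gamma_\w,\w)\simeq\mathcal F(\wt)$, which is \cref{MainConjecture} for $n=2$.
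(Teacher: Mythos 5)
Your overall strategy coincides with the paper's: reduce to the Fermat/chain/loop families, produce explicit generators on the B-side, produce a distinguished basis of vanishing cycles for a Morsification of $\wt$ on the A-side, and match the two directed algebras. The genuine gap is in your choice of B-side generators and the properties you ascribe to them. You take the summands of $T$ to be character twists of the stabilised residue field (plus, optionally, component structure sheaves). But the twists of $k^{\mathrm{stab}}$ behave like simple modules: their mutual morphisms in $\mathrm{HMF}$ are \emph{not} concentrated in degree $0$. Already in one variable, for $w=x^p$ one computes $\Hom^1\bigl(k^{\mathrm{stab}},k^{\mathrm{stab}}(-\vec{x})\bigr)\neq 0$, and the same odd-degree morphisms persist between adjacent twists for the two-variable chain and loop polynomials. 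Consequently $\End^\bullet(T)$ is not concentrated in degree $0$, $T$ is not a tilting object, morphisms do not all "vanish in the wrong direction", and no degree/gap argument yields formality: with these generators one is computing the Ext-algebra of simple-type objects, where the higher $A_\infty$/Massey products carry essential information and must be controlled explicitly. This is exactly the difficulty the paper sidesteps by its key B-side move (\cref{rmkOrlov}): Orlov's collection of residue-field twists is replaced by the fattenings $\obj{0}{i,j}$, i.e.\ the stabilisations of $R((i+1)\vec{x}+(j+1)\vec{y})/(x^i,y^j)$, together with shifted structure sheaves of the components of $\w^{-1}(0)$. Only for that collection are all morphisms in degree $0$, so that directedness forces formality and the direct sum is genuinely tilting; your proposed $T$ does not have these properties, so the subsequent identification with the A-side as you describe it would not go through without substantial extra input.

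Two further points. First, a counting/generation issue: in the loop case there are only $|L/\Z\vec{c}|=pq-1$ distinct twists of $k^{\mathrm{stab}}$ while the Milnor number is $pq$, and these twists only split-generate rather than generate (\cref{rmkGenVsSplit}); including component sheaves repairs generation, but the vertex count then requires a careful selection and still does not produce a degree-$0$ commuting-square quiver. Second, on the A-side your sketch is the paper's strategy in outline, but the bulk of the actual proof lives in steps you do not engage: the resonant Morsification $\wt-\eps\xt\yt$, the explicit parallel-transport analysis on the neck regions needed to carry the rotated vanishing cycles into a single fibre, the modification of the overlapping vanishing paths (and the lemma that the relevant intermediate cycles are disjoint, so the extra monodromy acts trivially), and the homological argument that each nontrivial composition is computed by exactly one triangle. "Counting immersed polygons on a branched cover" does not by itself address these, and without them neither the intersection pattern nor the compositions and gradings are pinned down.
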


As a by-product of our proof we also show:

\begin{mthm}[{\cite[Conjecture 1.4, $n=2$]{LekiliUeda}}]
\label{Thm2}
For every two-variable invertible polynomial $\w$ the category $\mathrm{mf}(\C^2, \Gamma_\w, \w)$ has a tilting object, meaning an object $\mathcal{E}$ satisfying $\End^i(\mathcal{E}) = 0$ for all $i\neq 0$ and such that $\hom^\bullet(\mathcal{E}, X) \simeq 0$ implies $X \cong 0$.
\end{mthm}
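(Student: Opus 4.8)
The plan is to build the tilting object $\mathcal{E}$ explicitly on the B-side — which is in any case the object whose endomorphism algebra is matched against a basis of Lefschetz thimbles in the proof of \cref{Thm1}, so the construction does double duty. First I would reduce to normal forms: up to rescaling the variables and interchanging $x_1$ and $x_2$, every two-variable invertible polynomial is of Brieskorn--Pham type $\w = x_1^{p}+x_2^{q}$, chain type $\w = x_1^{p}x_2+x_2^{q}$, or loop type $\w = x_1^{p}x_2+x_1x_2^{q}$, for suitable integers $p,q$. In each case I would fix a presentation of $L$ from its generators $\vec{x}_1,\vec{x}_2,\vec{c}$ and the relations $\sum_j a_{ij}\vec{x}_j=\vec{c}$, record the natural partial order on $L$ generated by $\vec{x}_1,\vec{x}_2$, and note that twisting by $\vec{c}$ implements the shift $[2]$ in $\mathrm{HMF}(\C^2,\Gamma_\w,\w)$.

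Next I would write down the candidate: a finite family of low-rank graded matrix factorisations $\obj{i}{\eps}$, indexed by a window of $i\in L$ together with an auxiliary label $\eps$, and set $\mathcal{E}=\bigoplus_{i,\eps}\obj{i}{\eps}$. The number of summands will be the Milnor number $\mu(\w)$ — for example $(p-1)(q-1)$ in the Brieskorn--Pham case, matching the eventual count of thimbles of $\wt$. The factorisations are the obvious coordinate-monomial ones attached to each additive term of $\w$, assembled by tensoring matrix factorisations in the Brieskorn--Pham case and by the corresponding, slightly more involved, presentations in the chain and loop cases.

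To check that $\End^i(\mathcal{E})=0$ for $i\neq 0$, I would compute each morphism complex $\hom^\bullet(\obj{i}{\eps},\obj{j}{\delta})$ directly: since $\w$ is homogeneous and the modules are $L$-graded, this $2$-periodic complex decomposes into graded strands whose differentials are multiplication by coordinate monomials, so its cohomology reduces to counting monomials in two variables of a prescribed degree. Ordering the summands by the partial order on $L$ makes the family directed, so only the expected degree-zero morphisms (and identities) survive and all higher $\Ext$ vanish. This is the longest step, but it is an elementary type-by-type verification.

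For the generation property — that $\hom^\bullet(\mathcal{E},X)\simeq 0$ forces $X\cong 0$ — I would pass through \eqref{HMFStacksingEquivalence} to $D^b_\mathrm{sing}([\w^{-1}(0)/\Gamma_\w])$, which, since $\w$ has an isolated singularity, is split-generated by the equivariant residue field $\C$ and its finitely many $L$-twists (finitely many modulo the periodicity above). It then suffices to exhibit each such twist as an iterated mapping cone of the $\obj{i}{\eps}$: a Koszul-type resolution built from the structure morphisms $x_1,x_2$ of the factorisations does this for a single twist, and a short induction over $L$ — using the relations in $L$ to stay inside the chosen window — reaches all of them. Combined with the $\Ext$-vanishing this exhibits $\mathcal{E}$ as a tilting object. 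The main obstacle I anticipate is precisely this window: it must be small enough that the $\Ext$-computation comes out "upper triangular", yet large enough that the Koszul triangles reach every twist of $\C$, and this balance has to be struck uniformly across the Brieskorn--Pham, chain and loop families; once the right window is pinned down the individual triangles are routine.
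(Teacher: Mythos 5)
Your overall route is the same as the paper's: an explicit B-side collection, $\Ext$-vanishing by counting monomials in prescribed $L$-degrees (the paper's divisibility lemmas), and generation by reducing to the $L$-twists of $R/(x,y)$ via the Polishchuk--Vaintrob/Dyckerhoff split-generation result and then building those twists by explicit cones. In the Brieskorn--Pham case your candidate (tensor products of one-variable factorisations, i.e.\ the objects $\obj{0}{i,j}$) is exactly the paper's and the plan goes through. The gap is in the chain and loop cases, where the candidate $\mathcal{E}$ is never actually specified and the numerology guiding it is wrong precisely where it matters: the number of summands of a tilting object is the rank of $K_0(\mathrm{mf}(\C^2,\Gamma_\w,\w))$, which equals $\mu(\wt)$, not $\mu(\w)$. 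For the chain polynomial $\w=x^py+y^q$ these differ ($\mu(\w)=pq-q+1$ versus $\mu(\wt)=pq-p+1$), so ``$\mu(\w)$ summands, matching the thimbles of $\wt$'' is not a consistent prescription, and the inconsistency signals the missing idea.

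More substantively, no ``window of $L$-twists'' of Koszul-type factorisations attached to the monomials of $\w$ --- i.e.\ objects corresponding to finite-length modules supported at the origin --- can be tilting in the chain or loop cases. A tilting object induces an equivalence of $\mathrm{mf}(\C^2,\Gamma_\w,\w)$ with perfect modules over $\End(\mathcal{E})$, under which its summands become the indecomposable projectives, so their classes must span $K_0$; but the classes of origin-supported objects lie in the span of the twists $R(l)/(x,y)$, which is a proper subgroup (of rank at most $|L/\Z\vec{c}|=pq-1$ in the loop case, and contained in the kernel of a surjection $K_0\to\Z/2$ in the chain case --- see \cref{rmkGenVsSplit,rmkGenVsSplitChain}). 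This is why the paper's $\mathcal{E}$ contains, besides the fattened-origin objects $\obj{0}{i,j}$ (stabilisations of twists of $R/(x^i,y^j)$), the shifted structure sheaves of the components of $\w^{-1}(0)$: $\obj{y}{j}[3]$ and $\obj{\fac}{}[3]$ in the chain case, and additionally $\obj{x}{i}[3]$ in the loop case. These component objects are also indispensable in your generation step: the explicit cones reaching, e.g., $R(\vec{x}+\vec{y})/(x,y)$ and the twists $R(\vec{c})/(x,y)$ use $\obj{\fac}{}$ and shifts of $\obj{x}{p-1}$ (respectively $\obj{y}{}$-type objects in the chain case), not just the grid. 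So the ``window balance'' you flag as the anticipated obstacle is not a tuning issue but the crux: one must enlarge the collection beyond origin-supported factorisations by the component sheaves, after which your Hom-computation and cone arguments do proceed essentially as in the paper.
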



\subsection{Proof outline}
\label{sscOutline}

Invertible polynomials have been classified Kreuzer--Skarke \cite{KreuzerSkarke} and are known to be Thom--Sebastiani sums of \emph{atomic} polynomials of the following three types:
\begin{itemize}
\item Fermat, or type $A_{p-1}$: $\w = x^p$
\item chain: $\w = x_1^{p_1}x_2 + \dots + x_{n-1}^{p_{n-1}}x_n + x_n^{p_n}$
\item loop: $\w = x_1^{p_1}x_2 + \dots + x_{n-1}^{p_{n-1}}x_n + x_n^{p_n}x_1$.
\end{itemize}

\begin{ex}
A sum of Fermat polynomials is called \emph{Brieskorn--Pham}, and \cref{MainConjecture} was established for these polynomials, for all values of $n$, by Futaki--Ueda \cite{FutakiUedaBrieskornPhamProceedings,FutakiUedaBrieskornPhamJournal}.
\end{ex}

\begin{ex}
The $D_k$ singularity corresponds to the polynomial $x_1^2x_2 + x_2^{k-1}$ of chain type.  Futaki and Ueda also proved the conjecture for these singularities \cite{FutakiUedaD} (where the $D_k$ polynomial is on the A-side), as well as for Thom--Sebastiani sums of Brieskorn--Pham and type $D$ polynomials.
\end{ex}

We restrict attention to $n=2$, and use variables $x$ and $y$ rather than $x_i$, and $p$ and $q$ in place of $p_i$.  By the above classification we need to deal with the following cases:
\begin{itemize}
\item Brieskorn--Pham: $\w = x^p+y^q$, $\wt = \xt^p+\yt^q$
\item chain: $\w = x^py + y^q$, $\wt = \xt^p + \xt\yt^q$
\item loop: $\w = x^py + xy^q$, $\wt = \xt^p\yt + \xt\yt^q$.
\end{itemize}
We treat all three families in a uniform way, and obtain new proofs of the results of Futaki--Ueda for the two-variable Brieskorn--Pham and type $D$ (chain, $q=2$) singularities.  We shall always assume that $p$ and $q$ are at least $2$.  In the Brieskorn--Pham and chain cases these inequalities are necessary in order for the origin to be a critical point of both $\w$ and $\wt$, whilst if $p$ or $q$ is $1$ in the loop case then $\w$ and $\wt$ can be reduced to $x^2+y^2$ and $\xt^2+\yt^2$ by a change of variables.

The general strategy of proof is familiar: we match up explicit collections of generators on the two sides.  Concretely, on the A-side we compute the directed $A_\infty$-category $\mathcal{A}$  associated to a basis of vanishing cycles in the Milnor fibre of $\wt$.  Seidel \cite[Theorem 18.24]{SeidelBook} famously showed that after taking twisted complexes we obtain a quasi-equivalence
\[
\Tw \mathcal{A} \rightarrow \mathcal{F}(\wt),
\]
and readers unfamiliar with Fukaya--Seidel categories can take this as a definition of $\mathcal{F}(\wt)$.  The number of vanishing cycles in the basis, i.e.~the Milnor number of the singularity, is given by
\[
(p-1)(q-1)
\]
in the Brieskorn--Pham case,
\[
pq-p+1 = (p-1)(q-1) + (q-1) + 1
\]
in the chain case, and
\[
pq = (p-1)(q-1) + (p-1) + (q-1) + 1
\]
in the loop case.  These quantities, and the reasons for expressing them in this way, will fall out of our computations.

Meanwhile, on the $B$-side we identify a collection of objects in $\mathrm{mf}(\C^2, \Gamma_\w, \w)$ whose corresponding full subcategory $\mathcal{B}$ is quasi-equivalent to $\mathcal{A}$.  Since the matrix factorisation category is already pretriangulated we obtain a functor
\[
\Tw \mathcal{B} \rightarrow \mathrm{mf}(\C^2, \Gamma_\w, \w),
\]
and by a generation result (see \cref{PVGeneration,GenerationRefs}) this becomes a quasi-equivalence after taking the idempotent completion.  Our calcuations will actually show that the objects in $\mathcal{B}$ form a full exceptional collection so by \cite[Remark 5.14]{SeidelBook} the categories are in fact already idempotent complete.  Putting everything together we obtain a chain of quasi-equivalences
\[
\mathcal{F}(\wt) \simeq \Tw \mathcal{A} \simeq \Tw \mathcal{B} \simeq \mathrm{mf}(\C^2, \Gamma_\w, \w),
\]
proving \cref{Thm1}.  The sum of the objects in $\mathcal{B}$ gives the tilting object of \cref{Thm2}.

The choice of generators on the B-side is fairly natural; the main difficulty in proving \cref{Thm1} is to construct a Morsification and basis of vanishing paths for $\wt$ such that the category $\mathcal{A}$ built from the corresponding vanishing cycles matches up with $\mathcal{B}$.  In order to do this systematically we make a preliminary perturbation of $\wt$ by subtracting $\eps\xt\yt$ for small positive real $\eps$.  This has Morse critical points but not, in general, distinct critical values---following a suggestion of Yank\i~Lekili, we call this a \emph{resonant} Morsification.  The central fibre is nodal and upon passing to a nearby regular fibre the nodes are smoothed to thin necks, each supporting a vanishing cycle as the waist curve.  These cycles naturally pair up with the B-side generators supported along components of $\w^{-1}(0)$.

Understanding the remaining vanishing cycles, which are mirror to sheaves supported at the origin in $\w^{-1}(0)$, requires most of the work.  There is an obvious `real' vanishing cycle, and by acting by roots of unity on the $\xt$- and $\yt$-coordinates we obtain curves which are almost the other vanishing cycles.  The problem is that they live in different regular fibres, and carrying them to the same fibre requires explicit analysis of the parallel transport equation on the thin neck regions.  The resulting vanishing paths overlap each other, so we carefully perturb them to reduce to a small set of transverse intersections, and then eliminate these intersections by large deformations of the paths which do not affect the vanishing cycles.  Finally we modify the vanishing cycles by Hamiltonian perturbations to resolve the remaining ambiguities in their intersection pattern.

We end this discussion by pointing out recent work of Hirano and Ouchi \cite{HiranoOuchi}, which constructs semi-orthogonal decompositions of matrix factorisation categories for sums of polynomials which are only partially decoupled (non-Thom--Sebastiani).  In particular, this gives an approach to understanding the B-model for chain polynomials, and Hirano--Ouchi show that in this case the category has a full exceptional collection whose size matches the Milnor number of the Berglund--H\"ubsch transpose, providing further evidence for \cref{MainConjecture} in higher dimensions.  Shortly after the present paper appeared on arXiv, Aramaki and Takahashi gave an explicit full exceptional collection for chain polynomials, and showed that the Euler characteristics of the morphism complexes match the intersection form for a specific choice of vanishing cycles on the mirror, proving the chain case of \cref{MainConjecture} at the level of Grothendieck groups \cite[Corollary 3.8]{AramakiTakahashi}.

\subsection{Structure of the paper}

We first consider the case of loop polynomials in detail, describing the B-model in \cref{BModel} and the A-model in \cref{AModel}, culminating in proofs of \cref{Thm2} and \cref{Thm1} (in the loop case) respectively.  In \cref{BModelChain,AModelChain} we describe the minor modifications needed to deal with chain polynomials, and finally in \cref{BrieskornPham} we summarise the further modifications needed for Brieskorn--Pham polynomials.  We emphasise that these modifications are essentially just simplifications of the argument---the general approach is identical and all of the ingredients are contained in the loop case.

\subsection{Acknowledgements}

The authors are indebted to Yank\i~Lekili for suggesting this project, for many useful discussions, and for valuable feedback.  We are also grateful to the anonymous referee for helpful comments and suggestions.  JS would like to thank Jonny Evans and Michael Wong for their interest in this work.  MH is supported by the Engineering and Physical Sciences Research Council [EP/L015234/1], The EPSRC Centre for Doctoral Training in Geometry and Number Theory (The London School of Geometry and Number Theory), University College London.  JS is supported by EPSRC grant [EP/P02095X/1].


\section{B-model for loop polynomials}
\label{BModel}

\subsection{Graded matrix factorisations}
\label{sscMatrixFacs}

Our goal in this section is to understand the category $\mathrm{mf}(\C^2, \Gamma_\w, \w = x^py + xy^q)$ of equivariant matrix factorisations for the loop polynomial.  Recall that here $p$ and $q$ are assumed to be at least $2$.  We begin by briefly reviewing the definition, following \cite{FutakiUedaD}.  As mentioned in \cref{BHMirrorSymmetry}, we shall encode equivariance as respect for the grading by the abelian group $L$ freely generated by elements $\vec{x}$, $\vec{y}$ and $\vec{c}$ modulo the relations
\[
p\vec{x} + \vec{y} = \vec{x} + q\vec{y} = \vec{c}.
\]
Equivalently, $L$ is the quotient of $\Z^2$ by the subgroup generated by $(p-1, 1-q)$: the elements $\vec{x}$, $\vec{y}$ and $\vec{c}$ correspond to $(1, 0)$, $(0, 1)$ and $(p, 1) = (1, q)$ respectively.  Note that the quotient $L / \Z\vec{c}$ is isomorphic to $\Z/(pq-1)$, generated by $\vec{x}$ or equivalently by $\vec{y} = -p\vec{x}$.

Let $S$ denote the $L$-graded algebra $\C[x, y]$ in which $x$ has degree $\vec{x}$ and $y$ has degree $\vec{y}$.  The polynomial $\w = x^py+xy^q$ is a homogeneous element of degree $\vec{c}$, and we write $R$ for the quotient $S/(\w)$.  Given an $L$-graded $R$- or $S$-module $M$, and an element $l$ of $L$, we write $M(l)$ for the module obtained from $M$ by shifting the degree of each element downwards by $l$.  We shall use subscripts to denote $L$-graded pieces, so that $M(l)_i = M_{i+l}$ and $S_{\vec{x}} = k\cdot x$ for example.  Note that our notation for $R$ and $S$ is consistent with Futaki--Ueda \cite{FutakiUedaD}, but opposite to that of Dyckerhoff \cite{DyckerhoffCompactGenerators}.

By an \emph{$L$-graded matrix factorisation} of $W$ we mean a sequence
\[
K^\bullet = ( \cdots \rightarrow K^i \xrightarrow{k^i} K^{i+1} \xrightarrow{k^{i+1}} K^{i+2} \rightarrow \cdots )
\]
of $L$-graded free $S$-modules of finite rank such that $K^\bullet[2]$ is identified with $K^\bullet(\vec{c})$---i.e.~$K^{i+2}$ with $K^{i}(\vec{c})$ and $k^{i+2}$ with $k^i(\vec{c})$ for all $i$---and such that under these identifications the composition of any two consecutive maps in the sequence is multiplication by $\w$.  A finitely generated $L$-graded $R$-module $M$ gives rise to a matrix factorisation by taking a free resolution, which eventually stabilises (becomes $2$-periodic to the left, up to shifting the $L$-grading by $\vec{c}$ every two terms), then extending this $2$-periodic part indefinitely to the right, and replacing the free $R$-modules by the corresponding free $S$-modules; see \cite[Sections 2.1 and 2.2]{DyckerhoffCompactGenerators}.  This is the \emph{stabilisation} of $M$.

The set of $L$-graded matrix factorisations forms a $\Z$-graded dg-category $\mathrm{mf}(\C^2, \Gamma_\w, \w)$ as follows: $\hom^i(K^\bullet, H^\bullet)$ comprises sequences $(f^\bullet : K^\bullet \rightarrow H^\bullet[i])$ satisfying $f^\bullet[2] = f^\bullet(\vec{c})$, the differential
\[
\diff : \hom^i(K^\bullet, H^\bullet) \rightarrow \hom^{i+1}(K^\bullet, H^\bullet)
\]
is given by \cite[Definition 2.1]{DyckerhoffCompactGenerators}, namely
\[
\diff f = h \circ f - (-1)^i f \circ k,
\]
and composition is component-wise.  We shall write $\Hom^i$ for the degree $i$ cohomology of $\hom^\bullet$.

Finitely generated $L$-graded $R$-modules correspond to coherent sheaves on the stack $[\w^{-1}(0) / \Gamma_\w]$ and this gives a natural equivalence between $D^b_\mathrm{sing}([\w^{-1}(0) / \Gamma_\w])$ and the derived category of singularities of graded $R$-modules
\[
D^b_\mathrm{sing}(\gr R) \coloneqq D^b(\gr R) / \Perf(\gr R),
\]
where $\mathrm{Perf}$ now refers to complexes of projective modules ($D^b(\gr R)$ is the usual derived category of finitely-generated $L$-graded $R$-modules).  The equivalence \eqref{HMFStacksingEquivalence} then becomes an equivalence
\begin{equation}
\label{HMFsingEquivalence}
\mathrm{HMF}(\C^2, \Gamma_\w, \w) \rightarrow D^b_\mathrm{sing}(\gr R).
\end{equation}
Stabilisation of a module gives an inverse to this equivalence, and we will frequently switch between talking about matrix factorisations, modules, and sheaves on $[\w^{-1}(0) / \Gamma_\w]$.

\subsection{The basic objects}

The stack $[\w^{-1}(0) / \Gamma_\w]$ has three components: the lines $x=0$ and $y=0$ and the curve $x^{p-1}+y^{q-1} = 0$.  For brevity we will denote $x^{p-1}+y^{q-1}$ by $\fac$, so that $\w = xy\fac$.  The matrix factorisations corresponding to the structure sheaves of these components are
\begin{gather*}
\obj{x}{}^\bullet = ( \cdots \rightarrow S(-\vec{c}) \xrightarrow{y\fac} S(-\vec{x}) \xrightarrow{x} S \rightarrow \cdots ),
\\ \obj{y}{}^\bullet = ( \cdots \rightarrow S(-\vec{c}) \xrightarrow{x\fac} S(-\vec{y}) \xrightarrow{y} S \rightarrow \cdots ),
\end{gather*}
and
\[
\obj{\fac}{}^\bullet = ( \cdots \rightarrow S(-\vec{c}) \xrightarrow{xy} S(-\vec{c}+\vec{x}+\vec{y}) \xrightarrow{\fac} S \rightarrow \cdots )
\]
respectively, obtained by applying the stabilisation procedure of \cref{sscMatrixFacs} to the $L$-graded $R$-modules $R/(x)$, $R/(y)$ and $R/(\fac)$.  In each case, the third of the three terms written lies in degree $0$ within the sequence.  We will be particularly interested in the shifts
\[
\obj{x}{i} = \obj{x}{}((i+1-p)\vec{x}) \quad \text{for } i=1, \dots, p-1
\]
and
\[
\obj{y}{j} = \obj{x}{}((j+1-q)\vec{y}) \quad \text{for } j=1, \dots, q-1
\]
of the $\obj{x}{}$ and $\obj{y}{}$ objects.

The unique singular point of the stack is the origin, and the other main objects we will be interested in are $L$-grading shifts of the structure sheaf of its fattenings.  Specifically, for $i=1, \dots, p-1$ and $j=1, \dots, q-1$ let $\obj{0}{i,j}^\bullet$ be the matrix factorisation
\begin{center}
\begin{tikzcd}[row sep=7ex, column sep=10ex]
S(\vec{x}+\vec{y}) \arrow{r}{y^j} \arrow{dr}[near start, outer sep=-2pt]{-x^i} \ar[d, phantom, description, "\cdots\hskip7ex\bigoplus\phantom{\hskip7ex\cdots}"]
&  S(\vec{x}+(j+1)\vec{y}) \arrow{r}{xy^{q-j}} \arrow{dr}[near start]{x^i} \ar[d, phantom, description, "\bigoplus"]
& S(\vec{c}+\vec{x}+\vec{y}) \ar[d, phantom, description, "\phantom{\cdots\hskip7ex}\bigoplus\hskip7ex\cdots"]
\\ S(-\vec{c}+(i+1)\vec{x}+(j+1)\vec{y}) \arrow{ur}[near start, outer sep=-1pt]{x^{p-i}y} \arrow{r}{xy^{q-j}}
& S((i+1)\vec{x}+\vec{y}) \arrow{ur}[near start, outer sep=-1pt]{-x^{p-i}y} \arrow{r}{y^j}
& S((i+1)\vec{x}+(j+1)\vec{y})
\end{tikzcd}
\end{center}
corresponding to the $R$-module $R((i+1)\vec{x}+(j+1)\vec{y})/(x^i, y^j)$.  This stabilisation can be computed by starting with the obvious first steps of an $R$-free resolution
\[
R(\vec{x}+(j+1)\vec{y}) \oplus R((i+1)\vec{x}+\vec{y}) \xrightarrow{(\begin{smallmatrix} x^i & y^j \end{smallmatrix})} R((i+1)\vec{x}+(j+1)\vec{y})
\]
and extending by hand.  Shifts of the object $R/(x,y)$ appear in the work of Dyckerhoff \cite[Section 4.1]{DyckerhoffCompactGenerators}, who calls it $k^\mathrm{stab}$ ($k$ is the ground field), and Seidel \cite[Section 11]{SeidelGenusTwo}; here the resolution is described abstractly as a Koszul complex.  A concrete example close to our setting is given by Futaki--Ueda \cite[Section 4]{FutakiUedaD}.

\begin{rmk}
\label{rmkOrlov}
The motivation for considering these objects is Orlov's result \cite[Theorem 40(ii)]{Orlov09}, extended to the present setting in \cite[Theorem B.2]{HiranoOuchi}, which gives a semi-orthogonal decomposition
\[
D^b_\mathrm{sing}(\gr R) = \langle \mathcal{C},  D^b(Y) \rangle,
\]
where $Y$ is the projectivised stack $[(\w^{-1}(0) \setminus \{0\}) / \Gamma_\w]$ and $\mathcal{C}$ is the full subcategory on a certain collection of grading shifts of the structure sheaf of the origin.  In our case $Y$ is the zero locus of $\w$ inside the weighted projective line $\Proj S$, and it consists of three points: one is smooth and its structure sheaf corresponds to $\obj{\fac}{}$; the other two are stacky and their structure sheaves, twisted by characters of their isotropy groups, are given by the $\obj{x}{i}$ and $\obj{y}{j}$.  We replace $\mathcal{C}$ by the related category $\langle \obj{0}{i,j} \rangle$ to give the right pattern of morphisms.
\end{rmk}

Let $\mathcal{B}$ be the full $A_\infty$-subcategory of $\mathrm{mf}(\C^2, \Gamma_\w, \w)$ on the $pq-1$ objects
\[
\{\obj{0}{i,j}, \obj{x}{i}[3], \obj{y}{j}[3], \obj{\fac}{}[3]\}_{i=1, \dots, p-1;\ j=1, \dots, q-1}.
\]
The reason for the shifts is so that all morphisms turn out to have degree $0$.  In \cref{BMorphisms1,BMorphisms2,BMorphisms4,BMorphisms5,BMorphisms6} we compute the morphisms between these objects in the cohomology category $\mathrm{HMF}(\C^2, \Gamma_\w, \w)$.  The reader willing to take these calculations on trust may skip immediately to \cref{BMorphismsTotal}, where we assemble the results and deduce that $\mathcal{B}$ is quasi-equivalent to a specific quiver algebra with relations, with formal $A_\infty$-structure.  Then in \cref{BGeneration} we address the issue of generation, and show that $\Tw\mathcal{B} \rightarrow \mathrm{mf}(\C^2, \Gamma_\w, \w)$ is a quasi-equivalence.  We conclude that the sum of the objects in $\mathcal{B}$ is a tilting object for $\mathrm{mf}(\C^2, \Gamma_\w, \w)$, proving \cref{Thm2} for loop polynomials.


\subsection{Morphisms between $\obj{x}{}$'s, between $\obj{y}{}$'s, and from $\obj{\fac}{}$ to itself}
\label{BMorphisms1}

We wish to compute morphisms in the cohomology category $\mathrm{HMF}(\C^2, \Gamma_\w, \w)$, and a priori this involves taking the cohomology of the morphism complexes described in \cref{sscMatrixFacs}.  Thinking of matrix factorisations as stabilisations of $R$-modules, this corresponds to computing module $\Ext$'s by (projectively) resolving both the domain and codomain.  One might expect the latter to be unnecessary, and Buchweitz \cite[Section 1.3, Remark (a)]{Buchweitz} showed that this is indeed the case: given $L$-graded $R$-modules $M$ and $M'$ with stabilisations $K$ and $K'$, we have
\[
\Hom^\bullet_{\mathrm{HMF}(\C^2, \Gamma_\w, \w)} (K, K') \cong \mathrm{H}^\bullet \left(\Hom_{\gr R} (K \otimes_S R, M')\right).
\]
The $\Hom$ on the right-hand side is taken component-wise on the complex $K \otimes_S R$.

For any $l$ in $L$ we therefore have
\[
\Hom^\bullet(\obj{x}{}, \obj{x}{}(l)) \cong \mathrm{H}^\bullet \big( \cdots \rightarrow (R/(x))_l \xrightarrow{x} (R/(x))_{l+\vec{x}} \xrightarrow{y\fac} (R/(x))_{l+\vec{c}} \rightarrow \cdots \big),
\]
where the first of the three written terms now lives in degree $0$ (we have taken $L$-graded module homomorphisms from $\obj{x}{}^\bullet \otimes_S R$ into $R(l)/(x)$).  This gives
\[
\Hom^{2m}(\obj{x}{}, \obj{x}{}(l)) \cong (R/(x,yw))_{m\vec{c}+l}
\]
for any integer $m$, whilst $\Hom^{2m+1}(\obj{x}{}, \obj{x}{}(l)) = 0$.

One can easily compute a basis of $\Hom^{2m}$ by hand in this situation, but since we will repeatedly make similar arguments we record the following general facts relating gradings and divisibility:

\begin{lem}
\label{GradingIdeal}
Suppose that $a$ and $b$ are integers satisfying $a \leq p-1$ and $b \leq q-1$, and that $s$ is an element of $S$ (or $R$) which is homogeneous modulo $\vec{c}$, of degree $a\vec{x}+b\vec{y} \mod \vec{c}$.  Then:
\begin{enumerate}[(i)]
\item\label{gr1} $s$ lies in the ideal $(x^a, y^{q-1+b}) \cap (x^{p-1+a}, y^b)$.
\item\label{gr2} If $a \leq p-2$ then $s$ also lies in $(x^a, y^{q+b})$.
\item\label{gr3} If $b \leq q-2$ then $s$ also lies in $(x^{p+a}, y^b)$.
\end{enumerate}
\end{lem}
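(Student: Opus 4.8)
The plan is to convert the grading hypothesis into a linear congruence on the exponents of the monomials of $s$, and then settle each containment by an elementary estimate modulo $pq-1$.

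First I would reformulate. By the description of $L$ in \cref{sscMatrixFacs}, the quotient $L/\Z\vec c$ is cyclic of order $pq-1$, generated by $\vec x$, with $\vec y=-p\vec x$; hence $x^m y^n$ has degree $(m-pn)\vec x$ in $L/\Z\vec c$, so ``$s$ is homogeneous modulo $\vec c$ of degree $a\vec x+b\vec y$'' is exactly the statement that every monomial $x^m y^n$ occurring in $s$ satisfies
\[
m-pn \equiv a-pb \pmod{pq-1}.
\]
Multiplying by $-q$ (a unit modulo $pq-1$, since $\gcd(q,pq-1)=1$) gives the equivalent form $n-qm\equiv b-qa\pmod{pq-1}$; whichever is more convenient depends on which generator of the ideal we are testing against. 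If $s$ lies in $R$ rather than $S$ I would first replace it by a homogeneous-modulo-$\vec c$ lift to $S$ of the same degree — harmless since $\w$ is homogeneous of degree $\vec c$ — so that the containments over $R$ follow from those over $S$. Finally, if a generator of one of the ideals in the statement is a non-positive power of $x$ or $y$, I read it as $1$, so that ideal is all of $S$ and the assertion is vacuous; otherwise all exponents occurring are non-negative, and I argue as follows.

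For (i), suppose for contradiction that some monomial $x^m y^n$ of $s$ lies in neither $(x^a)$ nor $(y^{q-1+b})$; then $0\le m\le a-1$ and $0\le n\le q+b-2$. Setting $u=a-m\ge 1$ and $v=(q-1+b)-n\ge 1$ and substituting into $n-qm\equiv b-qa$, the congruence becomes $q(u+1)\equiv v+1\pmod{pq-1}$. Now $m\ge 0$ and $a\le p-1$ give $2\le u+1\le p$, hence $q(u+1)\le pq$; and $n\ge 0$ and $b\le q-1$ give $2\le v+1\le q+b\le 2q-1$. Therefore $q(u+1)-(v+1)$ lies in $[\,1,\,pq-2\,]$, which contains no multiple of $pq-1$ — a contradiction. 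So $s\in(x^a,y^{q-1+b})$, and $s\in(x^{p-1+a},y^b)$ follows by the symmetric argument (swap $x\leftrightarrow y$, $p\leftrightarrow q$, $a\leftrightarrow b$, or equivalently use the form $m-pn\equiv a-pb$). For (ii) we assume the sharper bound $a\le p-2$; if some monomial of $s$ lies in neither $(x^a)$ nor $(y^{q+b})$ then $0\le m\le a-1$ and $0\le n\le q+b-1$, and the substitution $u=a-m$, $v=(q+b)-n$ turns the congruence into $q(u+1)\equiv v\pmod{pq-1}$. This time $a\le p-2$ gives $u+1\le p-1$, so $q(u+1)\le pq-q$, while $v\le q+b\le 2q-1$, whence $q(u+1)-v\in[\,1,\,pq-q-1\,]\subseteq[\,1,\,pq-2\,]$ — again impossible. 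Part (iii) is part (ii) under the same interchange of $(x,p,a)$ and $(y,q,b)$.

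This is all elementary; the one step needing care is the range bookkeeping in the previous paragraph — verifying that in each case the integer in question is pinned strictly between $0$ and $pq-1$, and that this uses precisely the stated inequality ($a\le p-1$, $b\le q-1$ for (i); the sharpened $a\le p-2$, $b\le q-2$ for (ii) and (iii)). To guard against an off-by-one error I would check the extreme monomials $m=0$ and $n=0$ separately by hand; beyond that it is routine computation and symmetry.
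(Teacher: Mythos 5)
Your proposal is correct and is essentially the paper's own argument: both reduce the grading hypothesis to a congruence modulo $pq-1$ on the exponents of each monomial and then rule out any nonzero multiple of $pq-1$ by elementary range bookkeeping, using the extra hypothesis $a\leq p-2$ (or $b\leq q-2$) exactly where the sharper containments (ii), (iii) require it. Your multiplication by $-q$ to get $n-qm\equiv b-qa$ is just a cosmetic reparametrisation of the paper's congruence $(u-a)-p(v-b)\equiv 0 \pmod{pq-1}$.
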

\begin{proof}
Assume $a \leq p-1$ and $b \leq q-1$, and let $x^uy^v$ be a monomial in $s$, so that
\begin{equation}
\label{eqMonomialDegree}
(u-a)\vec{x} + (v-b)\vec{y} \equiv 0 \mod \vec{c}.
\end{equation}
We claim first that $u \geq a$ or $v \geq q-1+b$, so suppose for contradiction that neither holds.  Then
\[
-(p-1) \leq u-a \leq -1 \quad \text{and} \quad -(q-1) \leq v-b \leq q-2,
\]
so $(u-a) - p(v-b)$ is non-zero (by reducing modulo $p$) and lies strictly between $\pm (pq-1)$.  Substituting $\vec{y} = -p\vec{x} \mod \vec{c}$ into \eqref{eqMonomialDegree} tells us that $(u-a)-p(v-b) \equiv 0 \mod{(pq-1)}$, which gives the desired contradiction, and we deduce that $u \geq a$ or $v \geq q-1+b$, and hence that $s$ lies in $(x^a, y^{q-1+b})$.  The other arguments are analogous.
\end{proof}

\begin{lem}
\label{GradingZero}
Suppose $s$ is an element of degree $0 \mod \vec{c}$.  Then the non-constant terms in $s$ lie in the ideal $(x^{pq-1}, x^py, xy^q, y^{pq-1})$.
\end{lem}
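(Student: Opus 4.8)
The plan is to reduce immediately to monomials and then run an elementary case analysis on exponents, in the style of the proof of \cref{GradingIdeal}. Recall that modulo $\vec{c}$ the $L$-degree of a monomial $x^uy^v$ is recorded by the image of $u\vec{x}+v\vec{y}$ in $L/\Z\vec{c}\cong\Z/(pq-1)$, and since $\vec{y}=-p\vec{x}$ there this image is the residue class of $u-pv$. So the hypothesis on $s$ says precisely that every monomial $x^uy^v$ occurring in it satisfies $u-pv\equiv 0\pmod{pq-1}$; as ideal membership can be checked monomial by monomial, it suffices to show that every such monomial with $(u,v)\neq(0,0)$ lies in $I\coloneqq(x^{pq-1},x^py,xy^q,y^{pq-1})$.

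I would then split into three cases. If $v=0$ then $u\equiv 0\pmod{pq-1}$ with $u\geq 1$, forcing $u\geq pq-1$, so $x^u\in(x^{pq-1})$. If $u=0$ then $pv\equiv 0\pmod{pq-1}$, and since $\gcd(p,pq-1)=1$ this gives $v\equiv 0\pmod{pq-1}$, hence $v\geq pq-1$ and $y^v\in(y^{pq-1})$. If instead $u,v\geq 1$, I would argue by contradiction: if neither $u\geq p$ nor $v\geq q$, so that $0\leq u\leq p-1$ and $0\leq v\leq q-1$, then $-p(q-1)\leq u-pv\leq p-1$ and hence $-(pq-1)<u-pv<pq-1$ using $p,q\geq 2$; combined with $u-pv\equiv 0\pmod{pq-1}$ this forces $u=pv$, which with $v\geq 1$ gives $u\geq p$, a contradiction. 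Hence $u\geq p$, so $x^uy^v\in(x^py)$ since $v\geq 1$, or $v\geq q$, so $x^uy^v\in(xy^q)$ since $u\geq 1$. In every case the non-constant monomial lies in $I$, whence the non-constant part of $s$ does too.

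There is no real obstacle here: the argument is a routine refinement of \cref{GradingIdeal}, and the only points worth flagging are that the strictness of the inequalities bounding $u-pv$ is exactly where the standing assumption $p,q\geq 2$ is used (the statement would be false if $p$ or $q$ were $1$), and that it is immaterial whether one works in $S$ or in $R$, since $\w=x^py+xy^q$ already lies in $I$, so both the constant term of $s$ and the ideal $I$ are unambiguous in $R$.
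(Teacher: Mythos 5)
Your proof is correct and follows essentially the same route as the paper's: reduce to a single non-constant monomial $x^uy^v$, use $u-pv\equiv 0\pmod{pq-1}$, handle $u=0$ and $v=0$ separately, and in the mixed case force $u\geq p$ or $v\geq q$ by a size estimate on $u-pv$. The only (immaterial) difference is that you run the mixed case by contradiction with two-sided strict bounds, in the style of \cref{GradingIdeal}, where the paper argues directly that $u<p$ forces $u-pv\leq-(pq-1)$ and hence $v\geq q$.
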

\begin{proof}
Let $x^uy^v$ be a non-constant monomial in $s$.  If $u=0$ (or $v=0$) then one easily obtains $v \geq pq-1$ (respectively $u \geq pq-1$), so suppose now that $u$ and $v$ are both positive.  We have $u-pv \equiv 0 \mod{(pq-1)}$, so if $u < p$ then we must have $u-pv \leq -(pq-1)$ and hence $v \geq q$.
\end{proof}

From these we conclude:

\begin{lem}\label{KIorthonormal}
In $\mathrm{HMF}(\C^2, \Gamma_\w, \w)$ the objects $\obj{x}{1}, \dots, \obj{x}{p-1}$ are exceptional (the endomorphisms of each are just the scalar multiples of the identity) and pairwise orthogonal.
\end{lem}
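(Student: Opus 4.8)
The plan is to read the morphism spaces straight off the computation of $\Hom^\bullet(\obj{x}{}, \obj{x}{}(l))$ made above, and then cut them down using the grading constraints of \cref{GradingIdeal,GradingZero}. Since each $\obj{x}{i}$ is an $L$-grading shift of $\obj{x}{}$, for all $i,j \in \{1,\dots,p-1\}$ the odd groups $\Hom^{2m+1}(\obj{x}{i}, \obj{x}{j})$ vanish and
\[
\Hom^{2m}(\obj{x}{i}, \obj{x}{j}) \cong \big(R/(x, y\fac)\big)_{m\vec{c} + (j-i)\vec{x}}.
\]
A useful preliminary simplification is that $R/(x, y\fac) = S/(x, y^q) \cong \C[y]/(y^q)$: indeed $\w = xy\fac$ and $y\fac \equiv y^q \bmod x$ because $\fac = x^{p-1}+y^{q-1}$, and then $\C[y]/(y^q)$ is graded with $y^k$ in degree $k\vec{y}$ for $0 \le k \le q-1$. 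In particular every generator of the ideal $(x^{pq-1}, x^p y, xy^q, y^{pq-1})$ of \cref{GradingZero} maps to zero in $R/(x, y\fac)$ (using $pq - 1 \ge q$ for the last one).

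For exceptionality I would take $i = j$, so the relevant degree is $m\vec{c}$, which is $\equiv 0 \bmod \vec{c}$. By \cref{GradingZero} the non-constant part of any homogeneous element of this degree lies in $(x^{pq-1}, x^p y, xy^q, y^{pq-1})$ and hence dies in $R/(x,y\fac)$, whereas a surviving constant term would force $m\vec{c} = 0$ in $L$; but the quotient $L/\Z\vec{c}$ is finite, so $\vec{c}$ has infinite order and this only happens when $m = 0$. Thus $\Hom^\bullet(\obj{x}{i}, \obj{x}{i})$ is $\C$ concentrated in degree $0$, spanned by the identity, so each $\obj{x}{i}$ is exceptional.

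For orthogonality take $i \ne j$, so that $1 \le |j-i| \le p-2$, and treat the two sign cases in turn; in both, the degree $m\vec{c} + (j-i)\vec{x}$ is $\equiv (j-i)\vec{x} \bmod \vec{c}$ and we want its graded piece in $R/(x,y\fac)$ to vanish. If $j - i \ge 1$, present the degree as $a\vec{x} + b\vec{y}$ with $(a,b) = (j-i, 0)$; then $a = j - i \le p - 2$, so \cref{GradingIdeal}(ii) puts any element of this degree in $(x^{j-i}, y^q) \subseteq (x, y^q)$. If $j - i \le -1$, shift the degree by $\vec{c}$ first, obtaining the representative $(a,b) = (j-i+p, 1)$, which is admissible because $2 \le j - i + p \le p - 1$ and $1 \le q - 1$; then \cref{GradingIdeal}(i) puts the element in $(x^{j-i+p}, y^q) \cap (\cdots) \subseteq (x, y^q)$. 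In both cases the element vanishes in $R/(x,y\fac) \cong S/(x, y^q)$, so $\Hom^\bullet(\obj{x}{i}, \obj{x}{j}) = 0$ and the collection is pairwise orthogonal.

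I do not anticipate a real obstacle: all the content sits in the two grading lemmas together with the collapse $R/(x, y\fac) \cong \C[y]/(y^q)$. The only point needing care is the $L$-grading bookkeeping in the orthogonality step, since \cref{GradingIdeal} requires a representative $a\vec{x} + b\vec{y}$ with $a \le p - 1$ and $b \le q - 1$, so for the morphism space in the ``negative'' direction one must add $\vec{c}$ before the lemma applies. (One could also dispense with the lemmas and solve $m\vec{c} + (j-i)\vec{x} = k\vec{y}$ for $0 \le k \le q-1$ directly in $L$, but citing \cref{GradingIdeal,GradingZero} is cleaner.)
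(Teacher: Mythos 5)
Your proposal is correct and follows essentially the same route as the paper: read off $\Hom^{2m}(\obj{x}{i},\obj{x}{I})\cong (R/(x,y\fac))_{m\vec c+(I-i)\vec x}$ with odd groups zero, kill the off-diagonal pieces via \cref{GradingIdeal}(\ref{gr2}) when $I-i>0$ and via \cref{GradingIdeal}(\ref{gr1}) after rewriting the degree as $(p+I-i)\vec x+\vec y \bmod \vec c$ when $I-i<0$, and use \cref{GradingZero} for the diagonal case. Your extra observations (the identification $R/(x,y\fac)\cong\C[y]/(y^q)$ and the infinite order of $\vec c$ pinning the surviving constant to degree $0$) are just explicit versions of details the paper leaves implicit.
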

\begin{proof}
By the above computation the morphisms from $\obj{x}{i}$ to $\obj{x}{I}$ are given by the elements of $R/(x, y\fac)$ of degree $(I-i)\vec{x} \mod \vec{c}$.  If $I-i > 0$ then \cref{GradingIdeal}(\ref{gr2}) tells us that all such elements lie in $(x, y^q) = (x, y\fac)$, and hence vanish in the quotient.  If $I-i < 0$ then the same argument applies but using \cref{GradingIdeal}(\ref{gr1}) instead, after rewriting the degree as $(p+I-i)\vec{x} + \vec{y} \mod \vec{c}$.  Finally, if $I=i$ then \cref{GradingZero} tells us that only constants survive in the quotient.
\end{proof}

Likewise we have:

\begin{lem}\label{KIIorthonormal}
The objects $\obj{y}{1}, \dots \obj{y}{q-1}$ are exceptional and pairwise orthogonal.\hfill$\qed$
\end{lem}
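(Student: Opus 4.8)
The plan is to run the same argument as in the proof of \cref{KIorthonormal}, with the roles of $x$ and $y$ --- and hence of the exponents $p$ and $q$ --- interchanged throughout. The reason this is legitimate is that the loop polynomial $\w = x^py + xy^q$, the factor $\fac = x^{p-1}+y^{q-1}$, and the relations $p\vec{x}+\vec{y} = \vec{x}+q\vec{y} = \vec{c}$ defining $L$ are all invariant under this simultaneous swap; consequently so are the statements of \cref{GradingZero} and of \cref{GradingIdeal} (in the latter, parts (\ref{gr2}) and (\ref{gr3}) are exchanged). Every step of \cref{BMorphisms1} and \cref{KIorthonormal} therefore has a valid mirror counterpart.

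Concretely, I would first apply Buchweitz's formula, exactly as in \cref{BMorphisms1}, to the matrix factorisation $\obj{y}{}^\bullet = (\cdots \to S(-\vec{c}) \xrightarrow{x\fac} S(-\vec{y}) \xrightarrow{y} S \to \cdots)$ to obtain, for every $l \in L$,
\[
\Hom^{2m}(\obj{y}{}, \obj{y}{}(l)) \cong (R/(y, x\fac))_{m\vec{c}+l} \qquad \text{and} \qquad \Hom^{2m+1}(\obj{y}{}, \obj{y}{}(l)) = 0,
\]
noting that $(y, x\fac) = (y, x^p)$. Thus the morphisms from $\obj{y}{j}$ to $\obj{y}{J}$ are the elements of $R/(y, x^p)$ of degree $(J-j)\vec{y} \bmod \vec{c}$. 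If $J > j$ then $J-j \leq q-2$ (since $j, J \in \{1, \dots, q-1\}$), so \cref{GradingIdeal}(\ref{gr3}) places all such elements in $(x^p, y^{J-j}) \subseteq (y, x^p)$ and they vanish; if $J < j$ one rewrites the degree as $\vec{x} + (q+J-j)\vec{y} \bmod \vec{c}$ and applies \cref{GradingIdeal}(\ref{gr1}) to land in $(x^p, y^{q+J-j}) \subseteq (y, x^p)$; and if $J = j$ then \cref{GradingZero} shows that only constants survive modulo $(y, x^p)$. This gives exceptionality and pairwise orthogonality.

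I expect no real obstacle here. The one point that genuinely has to be checked is that \cref{GradingIdeal} and \cref{GradingZero} are symmetric under exchanging $(x,p)$ with $(y,q)$ --- which they are, precisely because $\w$ and $\fac$ are --- and, in particular, that the index bound $|J-j| \leq q-2$ coming from the range $j, J \in \{1, \dots, q-1\}$ is exactly the one absorbed by the mirror of \cref{GradingIdeal}(\ref{gr2}), namely part (\ref{gr3}). Alternatively, one could deduce the statement by transporting \cref{KIorthonormal} along the equivalence $\mathrm{mf}(\C^2, \Gamma_\w, \w) \simeq \mathrm{mf}(\C^2, \Gamma_{\w'}, \w')$ induced by the coordinate swap, where $\w' = x^qy + xy^p$, under which each $\obj{y}{j}$ corresponds to the analogue of $\obj{x}{j}$ for $\w'$; but the direct translation above is more transparent.
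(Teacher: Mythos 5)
Your proposal is correct and is exactly the argument the paper intends: the lemma is stated with a bare ``likewise'' precisely because it follows from the proof of \cref{KIorthonormal} with $(x,p)$ and $(y,q)$ interchanged, using $(y, x\fac)=(y,x^p)$, \cref{GradingIdeal}(\ref{gr3}) in place of (\ref{gr2}), the rewriting via $\vec{c}=\vec{x}+q\vec{y}$ for $J<j$, and \cref{GradingZero} for $J=j$. Your case checks (including the index bound $J-j\leq q-2$ and the use of the second ideal in \cref{GradingIdeal}(\ref{gr1})) are all accurate, so nothing further is needed.
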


Similar calculations give
\[
\Hom^{2m}(\obj{\fac}{}, \obj{\fac}{}) \cong (R/(xy, \fac))_{m\vec{c}}
\]
and $\Hom^{2m+1}(\obj{\fac}{}, \obj{\fac}{}) = 0$, so by \cref{GradingZero} we deduce:

\begin{lem}\label{KIIIends}
The object $\obj{\fac}{}$ is exceptional.\hfill$\qed$
\end{lem}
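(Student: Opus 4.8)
The plan is to run the same argument that was just used for \cref{KIorthonormal}. The displayed computation immediately above already gives $\Hom^{2m+1}(\obj{\fac}{},\obj{\fac}{}) = 0$ for all $m$ and $\Hom^{2m}(\obj{\fac}{},\obj{\fac}{}) \cong (R/(xy,\fac))_{m\vec{c}}$, so all that remains is to show that this last group is one-dimensional, spanned by the image of the identity, when $m = 0$, and vanishes for every $m \neq 0$.

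First I would note that since $\w = xy\fac$ already lies in the ideal $(xy)$, the quotient $R/(xy,\fac)$ coincides with $S/(xy,\fac)$, and that an element of degree $m\vec{c}$ is in particular homogeneous of degree $0 \bmod \vec{c}$. \Cref{GradingZero} then tells us that its non-constant part lies in $(x^{pq-1}, x^py, xy^q, y^{pq-1})$, so it suffices to check that each of these four generators dies in $S/(xy,\fac)$. The monomials $x^py$ and $xy^q$ are visibly multiples of $xy$. For the other two, the identities $x\fac = x^p + xy^{q-1}$ and $y\fac = y^q + x^{p-1}y$ show that $x^p$ and $y^q$ both lie in $\fac S + (xy)$; and since $p, q \geq 2$ we have $pq-1 \geq p$ and $pq-1 \geq q$, so $x^{pq-1} \in (x^p)$ and $y^{pq-1} \in (y^q)$ lie in $\fac S + (xy)$ as well. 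Hence every non-constant monomial of degree $0 \bmod \vec{c}$ vanishes in $S/(xy,\fac)$, and $(R/(xy,\fac))_{m\vec{c}}$ is spanned by the image of $1 \in S$.

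To conclude I would use that $\vec{c}$ has infinite order in $L$ (recall $L/\Z\vec{c} \cong \Z/(pq-1)$, so $\Z\vec{c}$ has finite index in $L \cong \Z^2$), whence the constant $1$, being of degree $0$, lies in the degree-$m\vec{c}$ piece only for $m = 0$. Therefore $\Hom^{2m}(\obj{\fac}{},\obj{\fac}{}) = 0$ for $m \neq 0$ while $\Hom^0(\obj{\fac}{},\obj{\fac}{}) \cong \C$ is generated by the identity, which is precisely the statement that $\obj{\fac}{}$ is exceptional. I do not expect a genuine obstacle: the argument is a verbatim copy of the $\obj{x}{i}$ case, and the only point that needs a moment's care is the reduction of the four generators from \cref{GradingZero} modulo $(xy,\fac)$, which is exactly where the hypotheses $p, q \geq 2$ are used.
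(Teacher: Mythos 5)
Your proof is correct and follows essentially the same route as the paper: the displayed computation of $\Hom^{2m}$ and $\Hom^{2m+1}$ followed by an appeal to \cref{GradingZero}, with your check that the four ideal generators die in $R/(xy,\fac)$ simply filling in the detail the paper leaves implicit. One minor slip: $L=\Z^2/\langle(p-1,1-q)\rangle$ can have torsion, so it need not be isomorphic to $\Z^2$; but $\vec{c}$ still has infinite order because $\Z\vec{c}$ has finite index $pq-1$ in the infinite group $L$, so your conclusion is unaffected.
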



\subsection{Morphisms between $\obj{x}{}$'s, $\obj{y}{}$'s, and $\obj{\fac}{}$}
\label{BMorphisms2}

For all $l$ and $m$ we have
\[
\Hom^{2m+1}(\obj{x}{}, \obj{y}{}(l)) \cong (R/(x,y))_{m\vec{c}+l+\vec{x}}
\]
whilst $\Hom^{2m}(\obj{x}{}, \obj{y}{}(l)) = 0$.  This gives:

\begin{lem}\label{KIKIIorthonormal}
Each $\obj{x}{i}$ is orthogonal to each $\obj{y}{j}$.
\end{lem}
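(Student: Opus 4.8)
The plan is to mimic the proofs of \cref{KIorthonormal,KIIorthonormal}: translate the vanishing of all morphism groups into an elementary non-divisibility statement about the grading group $L$, then verify it using $\vec{y} \equiv -p\vec{x} \bmod \vec{c}$ and the ranges of $i$ and $j$.

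First I would feed the defining shifts $\obj{x}{i} = \obj{x}{}((i+1-p)\vec{x})$ and $\obj{y}{j} = \obj{y}{}((j+1-q)\vec{y})$ into the formula stated just before the lemma. This expresses $\Hom^\bullet(\obj{x}{i}, \obj{y}{j})$ as $\Hom^\bullet(\obj{x}{}, \obj{y}{}(l))$ with $l = (p-1-i)\vec{x} + (j+1-q)\vec{y}$, so it is concentrated in odd degrees, where it equals $(R/(x,y))_{m\vec{c} + (p-i)\vec{x} + (j+1-q)\vec{y}}$ for the relevant $m$. Since $\w \in (x,y)$ we have $R/(x,y) \cong \C$, placed in degree $0 \in L$, so this piece can be nonzero only if $(p-i)\vec{x} + (j+1-q)\vec{y} \equiv 0 \bmod \vec{c}$. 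Under the identification $L/\Z\vec{c} \cong \Z/(pq-1)$ with $\vec{y} \equiv -p\vec{x}$, this says $pq-1$ divides $p(q-j) - i$; but $1 \le i \le p-1$ and $1 \le j \le q-1$ give $1 \le p(q-j) - i \le pq - p - 1$, which lies strictly between $0$ and $pq-1$. Hence $\Hom^\bullet(\obj{x}{i}, \obj{y}{j}) = 0$.

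For morphisms in the other direction I would invoke the $(x,p) \leftrightarrow (y,q)$ symmetry of $\w = x^p y + x y^q$ — which swaps $\obj{x}{}$ and $\obj{y}{}$ and fixes $\vec{c}$ — to obtain $\Hom^{2m+1}(\obj{y}{}, \obj{x}{}(l)) \cong (R/(x,y))_{m\vec{c} + l + \vec{y}}$ and $\Hom^{2m}(\obj{y}{}, \obj{x}{}(l)) = 0$, and then rerun the computation: $\Hom^\bullet(\obj{y}{j}, \obj{x}{i})$ is concentrated in odd degrees and equals $(R/(x,y))_{m\vec{c} + (i+1-p)\vec{x} + (q-j)\vec{y}}$, and modulo $\vec{c}$ the relevant integer is $p(q-j+1) - (i+1)$, which for $i,j$ in the stated ranges lies in $[p, pq-2] \subset (0, pq-1)$. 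So this group also vanishes, and the two objects are orthogonal.

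I do not expect a genuine obstacle here; the argument is routine. The only points needing care are the bookkeeping in $L$ — correctly converting the grading shifts that define $\obj{x}{i}$ and $\obj{y}{j}$ into a single shift of the second argument, and remembering that $R/(x,y)$ is supported in degree $0$ alone — and confirming that the integers produced by $1 \le i \le p-1$, $1 \le j \le q-1$ genuinely fall strictly inside the open interval $(0, pq-1)$ in both directions, rather than landing on an endpoint.
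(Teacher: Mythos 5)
Your proof is correct and follows essentially the same route as the paper: both reduce, via the formula preceding the lemma, to showing that certain $L$-graded pieces of $R/(x,y)$ vanish, and both conclude by the same arithmetic in $L/\Z\vec{c} \cong \Z/(pq-1)$ using $\vec{y}\equiv -p\vec{x}$. The only cosmetic difference is that you inline this divisibility check directly (exploiting $R/(x,y)\cong\C$ in degree $0$), whereas the paper invokes \cref{GradingIdeal}(\ref{gr1}), whose proof is the same computation.
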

\begin{proof}
For morphisms $\obj{x}{i}$ to $\obj{y}{j}$ we need to show that there are no (non-zero) elements in $R/(x,y)$ of degree $(1-i)\vec{x}+j\vec{y} \mod \vec{c}$, and this follows from \cref{GradingIdeal}(\ref{gr1}).  The argument is similar for morphisms in the opposite direction.
\end{proof}

Analogous computations yield
\[
\Hom^{2m+1}(\obj{x}{}(l), \obj{\fac}{}) \cong (R/(x, w))_{m\vec{c}-l+\vec{x}}
\]
and $\Hom^{2m}(\obj{x}{}(l), \obj{\fac}{}) = 0$ for all $l$ and $m$.  Similarly
\[
\Hom^{2m+1}(\obj{\fac}{}, \obj{x}{}(l)) \cong (R/(x,\fac))_{(m+1)\vec{c}+l-\vec{y}}
\]
whilst $\Hom^{2m}(\obj{\fac}{}, \obj{x}{}(l)) = 0$.

Likewise
\begin{gather*}
\Hom^{2m+1}(\obj{y}{}(l), \obj{\fac}{}) \cong (R/(y, \fac))_{m\vec{c}-l+\vec{y}},
\\ \Hom^{2m+1}(\obj{\fac}{}, \obj{y}{}(l)) \cong (R/(y, \fac))_{(m+1)\vec{c}+l-\vec{x}},
\\ \Hom^{2m}(\obj{y}{}(l), \obj{\fac}{}) = \Hom^{2m}(\obj{\fac}{}, \obj{y}{}(l)) = 0.
\end{gather*}

In particular:

\begin{lem}
For each $i$ and $j$ the objects $\obj{x}{i}$ and $\obj{y}{j}$ are orthogonal to $\obj{\fac}{}$.
\end{lem}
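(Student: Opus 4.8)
The plan is to run the same kind of argument as in the proofs of \cref{KIorthonormal,KIIorthonormal,KIKIIorthonormal}: feed the $\Hom$-complex computations recorded just above into \cref{GradingIdeal}. Orthogonality of two objects means the morphism space between them vanishes in \emph{every} degree, and this is unaffected by shifting either object, so it suffices to show that $\Hom^\bullet(\obj{x}{i}, \obj{\fac}{})$, $\Hom^\bullet(\obj{\fac}{}, \obj{x}{i})$, and the two analogous groups with $\obj{y}{j}$ in place of $\obj{x}{i}$, all vanish. By the formulas displayed above each of these is concentrated in odd degrees, where it is identified with a single $L$-graded piece of $R/(x,\fac)$ (for the $\obj{x}{i}$'s) or of $R/(y,\fac)$ (for the $\obj{y}{j}$'s).

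First I would plug in $\obj{x}{i} = \obj{x}{}((i+1-p)\vec x)$. A short calculation then identifies $\Hom^{2m+1}(\obj{x}{i}, \obj{\fac}{})$ with the part of $R/(x,\fac)$ in degree $\equiv (p-i)\vec x \bmod \vec c$, and $\Hom^{2m+1}(\obj{\fac}{}, \obj{x}{i})$ with the part in degree $\equiv (i+1-p)\vec x - \vec y \bmod \vec c$. In the first case $1 \leq p-i \leq p-1$, so \cref{GradingIdeal}(\ref{gr1}), with $a = p-i$ and $b = 0$, shows that any element of $R$ homogeneous of this degree mod $\vec c$ lies in $(x^{p-i}, y^{q-1})$; since $x^{p-i} \in (x)$ (as $p-i \geq 1$) and $y^{q-1} = \fac - x^{p-1} \in (x,\fac)$, such an element lies in $(x,\fac)$ and hence vanishes in the quotient. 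For the second case I would rewrite the degree, using $q\vec y = \vec c - \vec x$, as $(i+2-p)\vec x + (q-1)\vec y \bmod \vec c$, whose $\vec x$-coefficient is $\leq 1 \leq p-1$, and apply \cref{GradingIdeal}(\ref{gr1}) again: the element then lands in $(x^{i+1}, y^{q-1}) \subseteq (x,\fac)$ and so is zero. (Alternatively one can substitute $\vec y \equiv -p\vec x$ and treat $i \leq p-2$ and $i = p-1$ separately, using that $p\vec x \equiv -\vec y \equiv \vec x + (q-1)\vec y$.)

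The statements involving $\obj{y}{j}$ follow from the identical argument under the symmetry $x \leftrightarrow y$, $p \leftrightarrow q$, $i \leftrightarrow j$, now using $x^{p-1} = \fac - y^{q-1} \in (y,\fac)$ and the corresponding $\Hom$-formulas for $\obj{y}{}(l)$ against $\obj{\fac}{}$. The only point requiring any care — and it is pure bookkeeping rather than a genuine obstacle — is rewriting each relevant $L$-degree modulo $\vec c$ into a form $a\vec x + b\vec y$ with $a \leq p-1$ and $b \leq q-1$ so that \cref{GradingIdeal} applies, and then checking that the two ideal generators it produces (a positive power of $x$, together with $y^{q-1}$, or symmetrically) genuinely lie in $(x,\fac)$; this all comes down to the single relation $\fac = x^{p-1}+y^{q-1}$.
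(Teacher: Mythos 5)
Your proof is correct and follows essentially the same route as the paper's: identify the (odd-degree) morphism spaces with single $L$-graded pieces of $R/(x,\fac)$ (resp.\ $R/(y,\fac)$) and kill them via \cref{GradingIdeal}(\ref{gr1}), using $\fac = x^{p-1}+y^{q-1}$ to see the resulting ideals sit inside $(x,\fac)$ (resp.\ $(y,\fac)$). The only cosmetic difference is in the direction $\Hom^\bullet(\obj{\fac}{},\obj{x}{i})$, where you rewrite the degree uniformly as $(i+2-p)\vec{x}+(q-1)\vec{y}$ and use the second ideal in the intersection, whereas the paper rewrites it as $(i+1)\vec{x}$ and only treats $i=p-1$ separately---your parenthetical alternative is exactly the paper's argument.
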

\begin{proof}
For orthogonality of $\obj{x}{i}$ and $\obj{\fac}{}$ we need to check that elements of degree $(p-i)\vec{x}$ or $(i+1)\vec{x}$ modulo $\vec{c}$ lie in the ideal $(x, \fac) = (x, y^{q-1})$.  This follows immediately from \cref{GradingIdeal}(\ref{gr1}), except that for $(i+1)\vec{x}$ with $i=p-1$ we must first rewrite the degree as $\vec{x}+(q-1)\vec{y} \mod \vec{c}$.  The argument for $\obj{y}{j}$ is analogous.
\end{proof}

\begin{rmk}
These results match our expectation from \cref{rmkOrlov} that the objects $\obj{x}{i}$, $\obj{y}{j}$ and $\obj{\fac}{}$ correspond to structure sheaves of disjoint points in the projective stack $Y$, twisted by characters of their isotropy groups, and hence should be exceptional and orthogonal.
\end{rmk}


\subsection{Morphisms between $\obj{\fac}{}$ and $\obj{0}{}$'s}
\label{BMorphisms4}

We now fix $(i,j)$ with $1 \leq i \leq p-1$ and $1 \leq j \leq q-1$, and see that
\[
\Hom^\bullet(\obj{\fac}{}, \obj{0}{i,j}) \cong \mathrm{H}^\bullet \big( \cdots \rightarrow (R/(x^i, y^j))_l \xrightarrow{\fac} (R/(x^i, y^j))_{l+\vec{c}-\vec{x}-\vec{y}} \xrightarrow{xy} (R/(x^i, y^j))_{l+\vec{c}} \rightarrow \cdots \big),
\]
where $l=(i+1)\vec{x}+(j+1)\vec{y}$.  The terms in odd positions in the complex have degree $i\vec{x}+j\vec{y} \mod \vec{c}$ so by \cref{GradingIdeal}(\ref{gr1}) they lie in $(x^i, y^j)$ and therefore vanish.  The same holds in even positions after rewriting the degree $
(i+1)\vec{x}+(j+1)\vec{y} \mod \vec{c}$ as $(i+1-p)\vec{x}+j\vec{y} \mod \vec{c}$.

In the other direction, $\Hom^\bullet(\obj{0}{i,j}, \obj{\fac}{})$ is the cohomology of the complex
\begin{center}
\begin{tikzcd}[row sep=7ex, column sep=10ex]
(R/(\fac))_{-\vec{c}-\vec{x}-\vec{y}} \arrow{r}{xy^{q-j}} \arrow{dr}[near start, outer sep=-2pt]{-x^{p-i}y} \ar[d, phantom, description, "\cdots\hskip7ex\bigoplus\phantom{\hskip7ex\cdots}"]
& (R/(\fac))_{-\vec{x}-(j+1)\vec{y}} \arrow{r}{y^j} \arrow{dr}[near start]{x^{p-i}y} \ar[d, phantom, description, "\bigoplus"]
& (R/(\fac))_{-\vec{x}-\vec{y}} \ar[d, phantom, description, "\phantom{\cdots\hskip7ex}\bigoplus\hskip7ex\cdots"]
\\ (R/(\fac))_{-(i+1)\vec{x}-(j+1)\vec{y}} \arrow{ur}[near start, outer sep=-1pt]{x^i} \arrow{r}{y^j}
& (R/(\fac))_{-(i+1)\vec{x}-\vec{y}} \arrow{ur}[near start, outer sep=-1pt]{-x^i} \arrow{r}{xy^{q-j}}
& (R/(\fac))_{\vec{c}-(i+1)\vec{x}-(j+1)\vec{y}}
\end{tikzcd}
\end{center}
For each $m$, $\Hom^{2m}(\obj{0}{i,j}, \obj{\fac}{})$ is therefore given by
\[
\operatorname{Ker} \begin{pmatrix} xy^{q-j} & x^i \\ -x^{p-i}y & y^j \end{pmatrix} \quad \text{modulo} \quad \operatorname{Im} \begin{pmatrix} y^j & -x^i \\ x^{p-i}y & xy^{q-j} \end{pmatrix}
\]
in $(R/(\fac))_{(m-1)\vec{c}-\vec{x}-\vec{y}} \oplus (R/(\fac))_{m\vec{c}-l}$.  Ignoring gradings for a second, this kernel is spanned by those $f$, $g$ in $R$ such that there exist $h$, $k$ in $R$ with
\[
xy^{q-j}f+x^ig = (x^{p-1}+y^{q-1})h \quad \text{and} \quad -x^{p-i}yf+y^jg = (x^{p-1}+y^{q-1})k.
\]
Subtracting $x^i$ times the latter from $y^j$ times the former we see that $h=xh'$ and $k=yk'$ for some polynomials $h'$ and $k'$, and that $f=y^{j-1}h'-x^{i-1}k'$.  Plugging this back in gives $g=x^{p-i}h'+y^{q-j}k'$, so $\Hom^{2m}(\obj{0}{i,j}, \obj{\fac}{})$ is parametrised by
\[
\begin{pmatrix} y^{j-1} & -x^{i-1} \\ x^{p-i} & y^{q-j} \end{pmatrix} \begin{pmatrix} h' \\ k' \end{pmatrix} \quad \text{modulo} \quad \operatorname{Im} \begin{pmatrix} y^j & -x^i \\ x^{p-i}y & xy^{q-j} \end{pmatrix} \quad \text{(and modulo $w$)},
\]
with $h' \in R_{(m-2)\vec{c}+(q-j)\vec{y}}$ and $k' \in R_{(m-2)\vec{c}+(p-i)\vec{x}}$.  It is clear from this description that $h'$ and $k'$ only matter modulo $(y, w) = (y, x^{p-1})$ and $(x, w) = (x, y^{q-1})$, but $h'$ and $k'$ must lie in these ideals by \cref{GradingIdeal}(\ref{gr1}), so we conclude that $\Hom^{2m}(\obj{0}{i,j}, \obj{\fac}{})$ vanishes.

Similarly, $\Hom^{2m+1}(\obj{0}{i,j}, \obj{\fac}{})$ is parametrised by
\[
\begin{pmatrix} y^{q-j-1} & x^i \\ -x^{p-i-1} & y^j \end{pmatrix} \begin{pmatrix} h' \\ k' \end{pmatrix} \quad \text{modulo} \quad \operatorname{Im} \begin{pmatrix} xy^{q-j} & x^i \\ -x^{p-i}y & y^j \end{pmatrix} \quad \text{(and $w$)},
\]
with $h' \in R_{m\vec{c}-\vec{x}-(j+1)\vec{y}}$ and $k' \in R_{m\vec{c}-(i+1)\vec{x}-\vec{y}}$.  Obviously $k'$ can be eliminated and we're left with
\[
\Hom^{2m+1}(\obj{0}{i,j}, \obj{\fac}{}) \cong (R/(xy,\fac))_{(m-1)\vec{c}} \begin{pmatrix} y^{q-j-1} \\ -x^{p-i-1} \end{pmatrix},
\]
and by \cref{GradingZero} $(R/(xy,\fac))_{(m-1)\vec{c}}$ has only constants.  The upshot is:

\begin{lem}
In $\mathrm{HMF}(\C^2, \Gamma_\w, \w)$ the only morphisms between $\obj{\fac}{}$ and $\obj{0}{i,j}$ are from the latter to the former, spanned by $(y^{q-j-1}, -x^{p-i-1})$ in degree $3$ in the above complex.\hfill$\qed$
\end{lem}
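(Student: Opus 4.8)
The plan is to apply the formula of Buchweitz recalled above, which computes $\Hom^\bullet$ in $\mathrm{HMF}(\C^2,\Gamma_\w,\w)$ as the cohomology of the complex obtained by tensoring the stabilisation of the source down to $R$ and taking $L$-graded $R$-module homomorphisms componentwise into the target module. This reduces the lemma to two explicit cohomology computations, one in each direction between $\obj{\fac}{}$ and $\obj{0}{i,j}$, after which everything is a matter of divisibility and grading fed through \cref{GradingIdeal,GradingZero}.

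First I would dispatch $\Hom^\bullet(\obj{\fac}{},\obj{0}{i,j})$, which vanishes. Tensoring $\obj{\fac}{}^\bullet$ with $R$ and mapping into $R((i+1)\vec{x}+(j+1)\vec{y})/(x^i,y^j)$ produces a $2$-periodic complex whose terms are graded pieces of $R/(x^i,y^j)$ and whose differentials alternate between multiplication by $\fac$ and by $xy$. The odd-position terms have degree $i\vec{x}+j\vec{y}\bmod\vec{c}$, so \cref{GradingIdeal}(\ref{gr1}) puts them inside $(x^i,y^j)$ and they vanish; the even-position terms have degree $(i+1)\vec{x}+(j+1)\vec{y}\equiv(i+1-p)\vec{x}+j\vec{y}\bmod\vec{c}$, and \cref{GradingIdeal}(\ref{gr1}) kills them the same way. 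So this complex is identically zero.

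The real work is $\Hom^\bullet(\obj{0}{i,j},\obj{\fac}{})$. Now the source has rank-two free terms, so the relevant complex has terms $(R/(\fac))^{\oplus 2}$ with two alternating $2\times 2$ matrix differentials read off the tikz-cd presentation of $\obj{0}{i,j}^\bullet$ (using $R/(\fac)=S/(\fac)$, since $\fac\mid\w$), and in each parity the cohomology is a kernel modulo an image. To compute the kernel I would lift a pair $(f,g)$ to $S$ and write out the two conditions saying that the components of its image are divisible by $\fac=x^{p-1}+y^{q-1}$, with cofactors $h,k$. The decisive step is to form the combination ($y^j$ times one relation minus $x^i$ times the other): the left-hand side collapses to $xy\fac f$, so cancelling $\fac$ in the domain $S$ and using again that $S$ is a domain forces $h=xh'$ and $k=yk'$, hence $f=y^{j-1}h'-x^{i-1}k'$, and substituting back yields $g=x^{p-i}h'+y^{q-j}k'$. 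Thus the kernel is exactly the image of an explicit $2\times 2$ matrix in the free parameters $h',k'$, whose $L$-degrees are determined by the relations $p\vec{x}+\vec{y}=\vec{x}+q\vec{y}=\vec{c}$.

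It then remains to quotient by the image of the incoming differential and apply the grading lemmas. In the even case, multiplying the $h'$- and $k'$-columns of the parametrising matrix by $y$ and $x$ respectively lands them inside the image of the incoming differential, so $h'$ and $k'$ only matter modulo $(y,\fac)=(y,x^{p-1})$ and $(x,\fac)=(x,y^{q-1})$; but their $L$-degrees already force them into those ideals by \cref{GradingIdeal}(\ref{gr1}), so the even $\Hom$ vanishes. In the odd case the $k'$-column lies in the image outright, so $k'$ drops out, while multiplying the $h'$-column by $xy$ lands in the image; hence the odd $\Hom$ is a graded piece of $R/(xy,\fac)$ of degree a multiple of $\vec{c}$, tensored with the fixed vector $(y^{q-j-1},-x^{p-i-1})$. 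By \cref{GradingZero}, using that $x^py,xy^q\in(xy)$ and that $x^{pq-1},y^{pq-1}$ vanish modulo $(xy,\fac)$, this piece is the scalars when the degree is $0$ and zero otherwise, i.e.\ the odd $\Hom$ is one-dimensional and sits in degree $3$ of the complex, spanned by $(y^{q-j-1},-x^{p-i-1})$. Assembling the two directions gives the lemma. I expect the main obstacle to be the kernel computation in this second direction — especially recognising the combination of the two divisibility relations that forces the factorisations $h=xh'$, $k=yk'$ — together with keeping the $L$-grading bookkeeping precise enough that \cref{GradingIdeal} and \cref{GradingZero} apply with the right exponents at each stage.
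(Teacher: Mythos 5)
Your proposal is correct and follows essentially the same route as the paper: the same vanishing argument for $\Hom^\bullet(\obj{\fac}{},\obj{0}{i,j})$ via \cref{GradingIdeal}(\ref{gr1}) after rewriting the even-position degree, and the same kernel computation for $\Hom^\bullet(\obj{0}{i,j},\obj{\fac}{})$ — including the combination $y^j\cdot(\text{first relation})-x^i\cdot(\text{second})$ forcing $h=xh'$, $k=yk'$, the resulting parametrising matrices, and the reduction modulo the image so that \cref{GradingIdeal}(\ref{gr1}) kills the even part and \cref{GradingZero} leaves only the degree-$3$ generator $(y^{q-j-1},-x^{p-i-1})$. No substantive differences from the paper's proof.
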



\subsection{Morphisms between $\obj{x}{}$'s and $\obj{y}{}$'s and $\obj{0}{}$'s}
\label{BMorphisms5}

For each $I$ we have that $\Hom^\bullet (\obj{x}{I}, \obj{0}{i,j})$ vanishes since again the whole complex is zero by \cref{GradingIdeal}(\ref{gr1}).  Morphisms the other way are computed by the complex
\begin{center}
\begin{tikzcd}[row sep=7ex, column sep=10ex]
(R/(x))_{-2\vec{c}+I\vec{x}} \arrow{r}{xy^{q-j}} \arrow{dr}[near start, outer sep=-2pt]{-x^{p-i}y} \ar[d, phantom, description, "\cdots\hskip7ex\bigoplus\phantom{\hskip7ex\cdots}"]
& (R/(x))_{-\vec{c}+I\vec{x}-j\vec{y}} \arrow{r}{y^j} \arrow{dr}[near start]{x^{p-i}y} \ar[d, phantom, description, "\bigoplus"]
& (R/(x))_{-\vec{c}+I\vec{x}} \ar[d, phantom, description, "\phantom{\cdots\hskip7ex}\bigoplus\hskip7ex\cdots"]
\\ (R/(x))_{-\vec{c}+(I-i)\vec{x}-j\vec{y}} \arrow{ur}[near start, outer sep=-1pt]{x^i} \arrow{r}{y^j}
& (R/(x))_{-\vec{c}+(I-i)\vec{x}} \arrow{ur}[near start, outer sep=-1pt]{-x^i} \arrow{r}{xy^{q-j}}
& (R/(x))_{(I-i)\vec{x}-j\vec{y}}
\end{tikzcd}
\end{center}
All differentials vanish except $y^j$, which is injective, so we get
\begin{gather*}
\Hom^{2m}(\obj{0}{i,j}, \obj{x}{I}) \cong (R/(x, y^j))_{(m-2)\vec{c}+I\vec{x}},
\\ \Hom^{2m+1}(\obj{0}{i,j}, \obj{x}{I}) \cong (R/(x, y^j))_{(m-1)\vec{c}+(I-i)\vec{x}}.
\end{gather*}
The former is zero by \cref{GradingIdeal}(\ref{gr1}), whilst the latter is zero unless $I=i$, when it contains only constants, by the argument used in the proof of \cref{KIorthonormal}.  From this we get:

\begin{lem}
In $\mathrm{HMF}(\C^2, \Gamma_\w, \w)$ there are no morphisms from $\obj{x}{I}$ to $\obj{0}{i,j}$.  There are no morphisms in the other direction unless $I=i$, in which case the morphism space is spanned by $(0, 1)$ in degree $3$ in the above complex.  Similarly for morphisms between $\obj{y}{J}$ and $\obj{0}{i,j}$.\hfill$\qed$
\end{lem}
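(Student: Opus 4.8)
The plan is to compute the two morphism spaces $\Hom^\bullet(\obj{x}{I}, \obj{0}{i,j})$ and $\Hom^\bullet(\obj{0}{i,j}, \obj{x}{I})$ by the Buchweitz-style reduction used in the preceding subsections, and then run the resulting graded pieces of small quotient rings through \cref{GradingIdeal,GradingZero}. The statements about $\obj{y}{J}$ follow formally: exchanging $x \leftrightarrow y$ and $p \leftrightarrow q$ carries the loop polynomial to the loop polynomial with $p$ and $q$ interchanged, while swapping the $\obj{x}{}$'s with the $\obj{y}{}$'s and $\obj{0}{i,j}$ with $\obj{0}{j,i}$, so the computation below applies verbatim. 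Hence I only treat morphisms between $\obj{x}{I}$ and $\obj{0}{i,j}$.

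For $\Hom^\bullet(\obj{x}{I}, \obj{0}{i,j})$ I resolve the domain: its module is $R((I+1-p)\vec{x})/(x)$, with stabilisation $\obj{x}{I}^\bullet$, so after tensoring with $R$ and applying $\Hom_{\gr R}\bigl(-, R((i+1)\vec{x}+(j+1)\vec{y})/(x^i,y^j)\bigr)$ componentwise, every term of the resulting complex is a graded piece of $R/(x^i, y^j)$. Tracking the grading shifts, the terms in even cohomological positions have degree $\equiv (i-I)\vec{x} + j\vec{y}$ modulo $\vec{c}$ and those in odd positions $\equiv (i+1-I)\vec{x} + j\vec{y}$; in both cases the $\vec{x}$-coefficient is at most $p-1$ and the $\vec{y}$-coefficient is $j \le q-1$, so \cref{GradingIdeal}(\ref{gr1}) puts every such element into $(x^i, y^j)$. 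Thus the whole complex, and hence its cohomology, vanishes.

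For $\Hom^\bullet(\obj{0}{i,j}, \obj{x}{I})$ I instead resolve the domain $\obj{0}{i,j}$, using the explicit matrix factorisation $\obj{0}{i,j}^\bullet$ displayed above; the reduction produces the complex displayed above, $2$-periodic up to a shift by $\vec{c}$, all of whose terms are graded pieces of $R/(x) \cong \C[y]$. The key point is that every differential of $\obj{0}{i,j}^\bullet$ other than the two copies of $y^j$ --- namely $\pm x^i$, $\pm x^{p-i}y$ and $xy^{q-j}$ --- is divisible by $x$ (here $1 \le i \le p-1$), so modulo $x$ only the $y^j$-maps survive, and multiplication by $y^j$ is injective on $\C[y]$. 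The complex therefore splits into injective two-term strands, and its cohomology is a copy of $R/(x, y^j)$ in degree $(m-2)\vec{c} + I\vec{x}$ in each even cohomological degree $2m$ and in degree $(m-1)\vec{c} + (I-i)\vec{x}$ in each odd degree $2m+1$. The even contributions vanish by \cref{GradingIdeal}(\ref{gr1}), the degree being $\equiv I\vec{x}$ with $1 \le I \le p-1$. For the odd contributions the degree is $\equiv (I-i)\vec{x}$ modulo $\vec{c}$, and the case split from the proof of \cref{KIorthonormal} applies: for $I > i$ use \cref{GradingIdeal}(\ref{gr2}); for $I < i$ rewrite the degree as $(p+I-i)\vec{x} + \vec{y}$ and use \cref{GradingIdeal}(\ref{gr1}); and for $I = i$, \cref{GradingZero} shows that only constants survive, and a constant can occur only in the single cohomological degree for which $(m-1)\vec{c}$ equals $0$ in $L$, namely degree $3$, where the surviving space is $(R/(x,y^j))_0 = \C$. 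Tracing this class back through the complex, its nontrivial entry is the constant $1$ in the bottom-row summand $(R/(x))_0$, so it is represented by $(0,1)$ in degree $3$, as the statement records.

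I expect the only slightly fiddly part to be this last step --- keeping the $L$-grading shifts exactly right so that the cohomology is precisely $R/(x,y^j)$ at the stated degrees, and then checking that the lone one-dimensional piece lands in cohomological degree $3$, with representative $(0,1)$, rather than in some other degree. This is bookkeeping rather than a new idea, and it parallels the computation of $\Hom^\bullet(\obj{0}{i,j}, \obj{\fac}{})$ carried out immediately above.
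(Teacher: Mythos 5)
Your proposal is correct and follows essentially the same route as the paper: the first space vanishes because every term of its complex dies by \cref{GradingIdeal}(\ref{gr1}), and for the other direction one reduces modulo $x$ so that only the injective $y^j$ differentials survive, giving $\Hom^{2m}\cong(R/(x,y^j))_{(m-2)\vec{c}+I\vec{x}}$ and $\Hom^{2m+1}\cong(R/(x,y^j))_{(m-1)\vec{c}+(I-i)\vec{x}}$, which are then killed (or reduced to constants in degree $3$ when $I=i$) exactly by the case analysis from the proof of \cref{KIorthonormal}. The only difference is presentational: you spell out the degree bookkeeping and the $x\leftrightarrow y$ symmetry for the $\obj{y}{J}$ case, which the paper leaves implicit.
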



\subsection{Morphisms between $\obj{0}{}$'s}
\label{BMorphisms6}

The complex computing $\Hom^\bullet(\obj{0}{i,j}, \obj{0}{I,J})$ is
\begin{center}
\begin{tikzcd}[row sep=7ex, column sep=10ex]
(R/(x^I, y^J))_{-\vec{c}+I\vec{x}+J\vec{y}} \arrow{r}{xy^{q-j}} \arrow{dr}[near start, outer sep=-2pt]{-x^{p-i}y} \ar[d, phantom, description, "\cdots\hskip7ex\bigoplus\phantom{\hskip7ex\cdots}"]
& (R/(x^I, y^J))_{I\vec{x}+(J-j)\vec{y}} \arrow{r}{y^j} \arrow{dr}[near start]{x^{p-i}y} \ar[d, phantom, description, "\bigoplus"]
& (R/(x^I, y^J))_{I\vec{x}+J\vec{y}} \ar[d, phantom, description, "\phantom{\cdots\hskip7ex}\bigoplus\hskip7ex\cdots"]
\\ (R/(x^I, y^J))_{(I-i)\vec{x}+(J-j)\vec{y}} \arrow{ur}[near start, outer sep=-1pt]{x^i} \arrow{r}{y^j}
& (R/(x^I, y^J))_{(I-i)\vec{x}+J\vec{y}} \arrow{ur}[near start, outer sep=-1pt]{-x^i} \arrow{r}{xy^{q-j}}
& (R/(x^I, y^J))_{\vec{c}+(I-i)\vec{x}+(J-j)\vec{y}}
\end{tikzcd}
\end{center}
By \cref{GradingIdeal}(\ref{gr1}) all of the terms vanish except the bottom term in the even positions, giving
\begin{gather*}
\Hom^{2m}(\obj{0}{i,j}, \obj{0}{I,J}) \cong (R/(x^I, y^J))_{(I-i)\vec{x}+(J-j)\vec{y}},
\\ \Hom^{2m+1}(\obj{0}{i,j}, \obj{0}{I,J}) = 0.
\end{gather*}
If $I<i$ then we can rewrite $(I-i)\vec{x}+(J-j)\vec{y}$ as $(p+I-i)\vec{x}+(J-j+1)\vec{y}$ modulo $\vec{c}$ and apply \cref{GradingIdeal}(\ref{gr1}) to see that $\Hom^{2m}$ vanishes.  Likewise if $J<j$.

Now assume that $I\geq i$ and $J\geq j$.  By \cref{GradingIdeal}(\ref{gr1}), any element of degree $(I-i)\vec{x}+(J-j)\vec{y} \mod \vec{c}$ is divisible by $x^{I-i}y^{J-j}$ modulo $(x^I, y^J)$.  So we can rewrite $\Hom^{2m}$ as
\[
(R/(x^i, y^j))_0 \cdot x^{I-i}y^{J-j},
\]
and by \cref{GradingZero} the only surviving term is $\C \cdot x^{I-i}y^{J-j}$.  We deduce:

\begin{lem}
For all $(i,j)$ and $(I,J)$ we have that
\begin{flalign*}
&& \Hom^\bullet (\obj{0}{i,j}, \obj{0}{I,J}) &\cong \begin{cases} \C \cdot x^{I-i}y^{J-j} & \text{if } I\geq i \text{, } J \geq j \text{ and } \bullet=0 \\ 0 & \text{otherwise.} \end{cases} & \qed
\end{flalign*}
\end{lem}


\subsection{The total endomorphism algebra of the basic objects}
\label{BMorphismsTotal}

Combining the results of \cref{BMorphisms1,BMorphisms2,BMorphisms4,BMorphisms5,BMorphisms6} we see that in $\mathrm{HMF}(\C^2, \Gamma_\w, \w)$ the basic objects $\obj{x}{i}$, $\obj{y}{j}$, $\obj{\fac}{}$ and $\obj{0}{i,j}$ are all exceptional, and that the morphisms between distinct objects are spanned by:
\begin{itemize}
\item $(0, 1)$ in degree $3$ from each $\obj{0}{i,j}$ to $\obj{x}{i}$
\item $(0, 1)$ in degree $3$ from each $\obj{0}{i,j}$ to $\obj{y}{j}$
\item $(y^{q-j-1}, -x^{p-i-1})$ in degree $3$ from each $\obj{0}{i,j}$ to $\obj{\fac}{}$
\item $x^{I-i}y^{J-j}$ in degree $0$ from $\obj{0}{i,j}$ to $\obj{0}{I,J}$ whenever $I \geq i$ and $J \geq j$.
\end{itemize}
We immediately see that morphisms between the $\obj{0}{i,j}$ compose in the obvious way so that their total endomorphism algebra is the tensor product $A_{p-1} \otimes A_{q-1}$ of the path algebras of the $A_{p-1}$- and $A_{q-1}$-quivers (this is the path algebra of the obvious product quiver subject to the relations which say that the squares commute).  In fact, we have:

\begin{thm}
\label{LoopEndAlgebra}
The cohomology-level total endomorphism algebra of the objects $\obj{x}{i}[3]$, $\obj{y}{j}[3]$, $\obj{\fac}{}[3]$ and $\obj{0}{i,j}$ in $\mathcal{B}$ is the path algebra of the quiver-with-relations described in \cref{figLoopQuiver}, with all arrows living in degree zero.  In particular, $\mathcal{B}$ is a $\Z$-graded $A_\infty$-category concentrated in degree $0$, so is intrinsically formal.

\begin{figure}[ht]
\centering
\begin{tikzpicture}[blob/.style={circle, draw=black, fill=black, inner sep=0, minimum size=\blobsize}, arrow/.style={->, shorten >=6pt, shorten <=6pt}]
\def\blobsize{1.2mm}

\draw (0, 0) node[blob]{};
\draw (1, 0) node[blob]{};
\draw (2, 0) node[blob]{};
\draw (3.25, 0) node{\ $\cdots$};
\draw (4.5, 0) node[blob]{};
\draw (6, 0) node[blob]{};

\draw[arrow] (0, 0) -- (1, 0);
\draw[arrow] (1, 0) -- (2, 0);
\draw[arrow] (2, 0) -- (3, 0);
\draw[arrow] (3.5, 0) -- (4.5, 0);
\draw[arrow] (4.5, 0) -- (6, 0);

\draw[arrow] (0, 0) -- (0, 1);
\draw[arrow] (1, 0) -- (1, 1);
\draw[arrow] (2, 0) -- (2, 1);
\draw[arrow] (4.5, 0) -- (4.5, 1);

\begin{scope}[yshift=1cm]
\draw (0, 0) node[blob]{};
\draw (1, 0) node[blob]{};
\draw (2, 0) node[blob]{};
\draw (3.25, 0) node{\ $\cdots$};
\draw (4.5, 0) node[blob]{};
\draw (6, 0) node[blob]{};

\draw[arrow] (0, 0) -- (1, 0);
\draw[arrow] (1, 0) -- (2, 0);
\draw[arrow] (2, 0) -- (3, 0);
\draw[arrow] (3.5, 0) -- (4.5, 0);
\draw[arrow] (4.5, 0) -- (6, 0);

\draw[arrow] (0, 0) -- (0, 1);
\draw[arrow] (1, 0) -- (1, 1);
\draw[arrow] (2, 0) -- (2, 1);
\draw[arrow] (4.5, 0) -- (4.5, 1);
\end{scope}

\begin{scope}[yshift=3.5cm]
\draw (0, 0) node[blob]{};
\draw (1, 0) node[blob]{};
\draw (2, 0) node[blob]{};
\draw (3.25, 0) node{\ $\cdots$};
\draw (4.5, 0) node[blob]{};
\draw (6, 0) node[blob]{};

\draw[arrow] (0, 0) -- (1, 0);
\draw[arrow] (1, 0) -- (2, 0);
\draw[arrow] (2, 0) -- (3, 0);
\draw[arrow] (3.5, 0) -- (4.5, 0);
\draw[arrow] (4.5, 0) -- (6, 0);

\draw[arrow] (0, 0) -- (0, 1.5);
\draw[arrow] (1, 0) -- (1, 1.5);
\draw[arrow] (2, 0) -- (2, 1.5);
\draw[arrow] (4.5, 0) -- (4.5, 1.5);
\end{scope}

\begin{scope}[yshift=5cm]
\draw (0, 0) node[blob]{};
\draw (1, 0) node[blob]{};
\draw (2, 0) node[blob]{};
\draw (3.25, 0) node{\ $\cdots$};
\draw (4.5, 0) node[blob]{};
\draw (6, 0) node[blob]{};
\end{scope}

\draw (0, 2.37) node{$\vdots$};
\draw (1, 2.37) node{$\vdots$};
\draw (2, 2.37) node{$\vdots$};
\draw (4.5, 2.37) node{$\vdots$};
\draw (6, 2.37) node{$\vdots$};

\begin{scope}[yshift=2.5cm]
\draw[arrow] (0, 0) -- (0, 1);
\draw[arrow] (1, 0) -- (1, 1);
\draw[arrow] (2, 0) -- (2, 1);
\draw[arrow] (4.5, 0) -- (4.5, 1);
\end{scope}

\draw (3.25, 2.37) node{\ $\iddots$};
\draw[arrow] (4.5, 3.5) -- (6, 5);

\draw[line width=1mm, opacity=0.2] (-0.5, -0.5) rectangle (5, 4);
\draw[line width=1mm, opacity=0.2] (-0.5, 4.5) rectangle (5, 5.5);
\draw[line width=1mm, opacity=0.2] (5.5, -0.5) rectangle (6.5, 4);

\draw (-1.2, 2) node{$\obj{0}{i,j}$};
\draw (-1.25, 5) node{$\obj{x}{i}[3]$};
\draw (7.25, 2) node{$\obj{y}{j}[3]$};
\draw (6.6, 5) node{$\obj{\fac}{}[3]$};

\draw[->, dashed, rounded corners] (0.15, 3.72) -- (0.78, 3.72) -- (0.78, 4.85);
\draw[->, dashed, rounded corners] (1.15, 3.72) -- (1.78, 3.72) -- (1.78, 4.85);
\draw[->, dashed, rounded corners] (3.65, 3.72) -- (4.28, 3.72) -- (4.28, 4.85);
\draw[->, dashed, rounded corners] (4.72, 0.15) -- (4.72, 0.78) -- (5.85, 0.78);
\draw[->, dashed, rounded corners] (4.72, 2.65) -- (4.72, 3.28) -- (5.85, 3.28);

\draw (10.6, 2.25) node{\parbox{115pt}{\small \ \textbf{Relations:} \begin{enumerate}[(i)]\item Squares commute\item Dashed compositions vanish\end{enumerate}}};

\end{tikzpicture}
\caption{The quiver describing the category $\mathcal{B}$ for loop polynomials.\label{figLoopQuiver}}
\end{figure}

\end{thm}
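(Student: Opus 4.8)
The plan is to read the algebra straight off the morphism computations of \cref{BMorphisms1,BMorphisms2,BMorphisms4,BMorphisms5,BMorphisms6}, match it with the quiver-with-relations of \cref{figLoopQuiver}, and then obtain formality for degree reasons. Combining the summary above, every morphism in $\mathrm{HMF}(\C^2, \Gamma_\w, \w)$ between two of the basic objects already lies in degree $0$, apart from the maps $\obj{0}{i,j}\to\obj{x}{i}$, $\obj{0}{i,j}\to\obj{y}{j}$ and $\obj{0}{i,j}\to\obj{\fac}{}$, which lie in degree $3$; the shifts by $[3]$ move precisely these into degree $0$, so the cohomology-level total endomorphism algebra $\Lambda$ of the objects of $\mathcal{B}$ is concentrated in degree $0$. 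Every basic object is exceptional and every Hom-space between distinct objects is at most one-dimensional, so $\Lambda$ is a quotient of the path algebra of a quiver with one vertex per object. I would take as arrows the degree-$0$ multiplication morphisms $\obj{0}{i,j}\xrightarrow{x}\obj{0}{i+1,j}$ and $\obj{0}{i,j}\xrightarrow{y}\obj{0}{i,j+1}$ within the grid of $\obj{0}{}$'s, the boundary morphisms $\obj{0}{i,q-1}\to\obj{x}{i}[3]$ and $\obj{0}{p-1,j}\to\obj{y}{j}[3]$, and the corner morphism $\obj{0}{p-1,q-1}\to\obj{\fac}{}[3]$; since the $\obj{x}{i}[3]$, $\obj{y}{j}[3]$ and $\obj{\fac}{}[3]$ are mutually orthogonal and no morphisms run from them to any $\obj{0}{i,j}$, they are sinks and the quiver is that of \cref{figLoopQuiver}.

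Next I would check the relations. The squares in the $\obj{0}{}$-grid commute because the morphisms concerned are, on the level of $R$-modules, multiplication by $x$ and by $y$. The dashed compositions $\obj{0}{i,q-1}\xrightarrow{x}\obj{0}{i+1,q-1}\to\obj{x}{i+1}[3]$ and $\obj{0}{p-1,j}\xrightarrow{y}\obj{0}{p-1,j+1}\to\obj{y}{j+1}[3]$ vanish because their targets $\Hom^\bullet(\obj{0}{i,q-1},\obj{x}{i+1})$ and $\Hom^\bullet(\obj{0}{p-1,j},\obj{y}{j+1})$ are zero, a morphism from an $\obj{0}{i,j}$ to an $\obj{x}{I}$ being non-zero only when $I=i$ by \cref{BMorphisms5}. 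Hence there is a well-defined algebra homomorphism $\bar\phi$ from the path algebra $P_0$ of \cref{figLoopQuiver} to $\Lambda$. Using the commuting-square relations to normalise paths, one checks that the surviving normal-form paths form a basis of $P_0$ — the monotone lattice paths inside the grid, the paths straight up a column to an $\obj{x}{i}[3]$, straight along a row to a $\obj{y}{j}[3]$, or into the corner $\obj{0}{p-1,q-1}$ and out to $\obj{\fac}{}[3]$, together with the identities — with exactly one such path for each ordered pair of objects between which $\Lambda$ is non-zero and none for any other pair, so that $\dim P_0=\dim\Lambda$. It therefore suffices to show that each surviving path maps to a non-zero morphism, since then $\bar\phi$ is an isomorphism $P_0\xrightarrow{\sim}\Lambda$.

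For the grid paths the composite is multiplication by the monomial $x^{I-i}y^{J-j}$, which is non-zero in $R/(x^I, y^J)$ by the grading computation of \cref{BMorphisms6}. The remaining, and main, obstacle is the non-vanishing of the column, row and corner composites. For a column I would take the explicit cocycle representing the generator `$(0,1)$' of $\Hom^3(\obj{0}{i,q-1},\obj{x}{i})$, compose it with the evident chain map $\obj{0}{i,j}^\bullet\to\obj{0}{i,q-1}^\bullet$ lifting multiplication by $y^{q-1-j}$, and trace the result through the differentials of $\obj{0}{i,j}^\bullet$ to identify it with the generator of $\Hom^3(\obj{0}{i,j},\obj{x}{i})\cong(R/(x,y^j))_0=\C$; the $\obj{y}{j}[3]$ case and the $\obj{\fac}{}[3]$ case — the latter obtained by running the monomial $x^{p-1-i}y^{q-1-j}$ into the corner and then composing with the class `$(1,-1)$' — are entirely analogous. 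This is a calculation with the $L$-grading and the $2$-periodic stabilisations rather than anything structural, but it is the step that really requires work; granting it, $\bar\phi$ is an isomorphism and the first assertion of the theorem follows.

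Finally, intrinsic formality is automatic once $\Lambda$ is concentrated in a single degree: for any $A_\infty$-structure on such an algebra the higher cohomology operations $m^d$ ($d\geq 3$) have degree $2-d<0$ and so vanish, whence the minimal model of $\mathcal{B}$ reduces to $(\Lambda,m^2)$ and $\mathcal{B}$ is quasi-equivalent to $\Lambda$ regarded as an ordinary category. This yields the remaining assertions.
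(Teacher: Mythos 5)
Your proposal is correct and takes essentially the same route as the paper: read the degree-$0$ additive structure off the Hom computations, identify the resulting algebra with the quiver-with-relations of \cref{figLoopQuiver}, and deduce formality for degree reasons. The one step you leave granted---non-vanishing of the column/row/corner composites---is precisely what the paper verifies, and it is immediate once one writes the chain-level lift of the generator $x^{I-i}y^{J-j}\colon \obj{0}{i,j}\to\obj{0}{I,J}$ as $\left(\begin{smallmatrix}1&0\\0&x^{I-i}y^{J-j}\end{smallmatrix}\right)$ in even degree and $\left(\begin{smallmatrix}y^{J-j}&0\\0&x^{I-i}\end{smallmatrix}\right)$ in odd degree, so that composing with the generators $(0,1)$ and $(y^{q-J-1},-x^{p-I-1})$ visibly gives the corresponding generators out of $\obj{0}{i,j}$.
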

\begin{proof}
To prove the cohomology statement we just need to check that the morphisms compose correctly, namely that for $I\geq i$ and $J\geq j$ the compositions
\begin{gather*}
\Hom^3(\obj{0}{I,J}, \obj{\fac}{}) \otimes \Hom^0(\obj{0}{i,j}, \obj{0}{I,J}) \rightarrow \Hom^3(\obj{0}{i,j}, \obj{\fac}{}),
\\ \Hom^3(\obj{0}{i,J}, \obj{x}{i}) \otimes \Hom^0(\obj{0}{i,j}, \obj{0}{i,J}) \rightarrow \Hom^3(\obj{0}{i,j}, \obj{x}{i}),
\\ \Hom^3(\obj{0}{I,j}, \obj{y}{j}) \otimes \Hom^0(\obj{0}{i,j}, \obj{0}{I,j}) \rightarrow \Hom^3(\obj{0}{i,j}, \obj{y}{j})
\end{gather*}
send generators to generators.  This is immediate from the explicit descriptions of the morphisms above after noting that the generator
\[
R((i+1)\vec{x}+(j+1)\vec{y})/(x^i, y^j) \xrightarrow{x^{I-i}y^{J-j}} R((I+1)\vec{x}+(J+1)\vec{y})/(x^I, y^J)
\]
of $\Hom^0(\obj{0}{i,j}, \obj{0}{I,J})$ induces the maps
\[
\begin{pmatrix} 1 & 0 \\ 0 & x^{I-i}y^{J-j} \end{pmatrix} \text{ in even degree and } \begin{pmatrix} y^{J-j} & 0 \\ 0 & x^{I-i} \end{pmatrix} \text{ in odd degree}
\]
between the matrix factorisations (the degree $3$ matrix is the only one we actually need).

The final claim, about the $A_\infty$-structure, follows from the fact that a directed algebra concentrated in degree zero is formal---there is no room for non-trivial higher $A_\infty$-operations.
\end{proof}

\subsection{Generation}
\label{BGeneration}

We have now computed the quasi-isomorphism type of the full $A_\infty$-subcategory $\mathcal{B} \subset \mathrm{mf}(\C^2, \Gamma_\w, \w)$ on the basic objects $\obj{x}{i}, \obj{y}{j}, \obj{\fac}{}, \obj{0}{i,j}$.  The goal of this subsection is to prove:

\begin{prop}
\label{BGenerates}
The functor
\[
\Split (\Tw \mathcal{B}) \rightarrow \Split(\mathrm{mf}(\C^2, \Gamma_\w, \w))
\]
is a quasi-equivalence, where $\Split$ denotes $A_\infty$- (or dg-) idempotent completion.
\end{prop}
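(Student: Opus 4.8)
The plan is to apply a general criterion for when a full subcategory generates: since $\mathrm{mf}(\C^2, \Gamma_\w, \w)$ is pretriangulated, it suffices to show that the smallest thick (idempotent-complete, triangulated) subcategory containing the basic objects $\obj{x}{i}, \obj{y}{j}, \obj{\fac}{}, \obj{0}{i,j}$ is everything. By the equivalence \eqref{HMFsingEquivalence} we may work instead in $D^b_\mathrm{sing}(\gr R)$, and by the generation results cited as \cref{PVGeneration,GenerationRefs} (which say essentially that $D^b_\mathrm{sing}(\gr R)$ is generated by the images of finitely many shifts of a single explicit module, e.g.\ a version of Dyckerhoff's compact generator $k^\mathrm{stab}$, or all $L$-grading shifts of $R/\mathfrak{m}$), the task reduces to exhibiting the relevant generating objects inside $\langle \obj{x}{i}, \obj{y}{j}, \obj{\fac}{}, \obj{0}{i,j}\rangle$.

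First I would invoke Orlov's semi-orthogonal decomposition from \cref{rmkOrlov}, $D^b_\mathrm{sing}(\gr R) = \langle \mathcal{C}, D^b(Y)\rangle$, to split the problem into two pieces. For the $D^b(Y)$ piece: $Y$ is the zero locus of $\w$ in the weighted projective line $\Proj S$, which is a union of three (stacky) points, so $D^b(Y)$ is generated by the structure sheaves of those points twisted by characters of the isotropy groups --- and these are exactly (up to shift and twist) the $\obj{x}{i}$, $\obj{y}{j}$ and $\obj{\fac}{}$, by the identification recorded in \cref{rmkOrlov}. One has to check that the \emph{full} set of character-twists needed to generate $D^b(Y)$ is accounted for; this is where the specific ranges $i = 1, \dots, p-1$ and $j = 1, \dots, q-1$ matter, and it follows from the fact that $L/\Z\vec{c} \cong \Z/(pq-1)$ together with the structure of the isotropy groups at the two stacky points (of orders dividing $p$ and $q$ respectively). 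For the $\mathcal{C}$ piece: $\mathcal{C}$ is the subcategory on certain grading shifts of the skyscraper at the origin, and one checks that these shifts lie in $\langle \obj{0}{i,j}\rangle$ --- each $\obj{0}{i,j}$ is built from the skyscraper $R/\mathfrak{m}$ by iterated extensions (the module $R((i+1)\vec{x}+(j+1)\vec{y})/(x^i, y^j)$ has a filtration with graded pieces grading-shifts of $R/\mathfrak{m}$), and conversely the right grading shifts of $R/\mathfrak{m}$ are recovered as cones of the maps $x^{I-i}y^{J-j}$ between the $\obj{0}{i,j}$ computed in \cref{BMorphisms6}. Assembling these two inclusions shows $\langle \obj{x}{i}, \obj{y}{j}, \obj{\fac}{}, \obj{0}{i,j}\rangle = D^b_\mathrm{sing}(\gr R)$, hence $\Tw\mathcal{B} \to \mathrm{mf}(\C^2, \Gamma_\w, \w)$ is a quasi-equivalence after idempotent completion.

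The main obstacle I anticipate is the bookkeeping in the $\mathcal{C}$ piece: matching Orlov's explicit list of grading shifts of the skyscraper (which depends on the choice of weights $d_1, d_2, h$ and hence on $p, q$) against what the filtrations of the $\obj{0}{i,j}$ actually produce, and making sure the counts are consistent --- the total $pq - 1 = (p-1)(q-1) + (p-1) + (q-1) + 1$ should come out of combining the $(p-1)(q-1)$ origin-objects with the $p - 1$, $q - 1$, and $1$ objects on $Y$. A cleaner alternative, which I would fall back on if the Orlov-decomposition route gets mired in twists, is to argue directly: show that every $L$-grading shift of $R/\mathfrak{m}$ (which generate $D^b_\mathrm{sing}(\gr R)$ by Dyckerhoff/the cited generation results) lies in $\langle \obj{x}{i}, \obj{y}{j}, \obj{0}{i,j}\rangle$ by writing down explicit short exact sequences of $R$-modules --- for instance the sequences $0 \to R/(x^{i-1},y^j)(\ldots) \to R/(x^i,y^j)(\ldots) \to R/(x,y^j)(\ldots) \to 0$ and their transposes --- together with the sequence $0 \to R/(x) \to R/(x) \to R/(x,y) \to 0$ relating the line objects to the skyscraper, and checking that these suffice to reach all of the needed grading shifts modulo $\vec{c}$. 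Either way the key input is that the basic objects span the same grading shifts modulo $\vec{c}$ as the generators supplied by the cited generation theorem.
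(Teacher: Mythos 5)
Your fallback route is essentially the paper's proof: invoke the Polishchuk--Vaintrob result (\cref{PVGeneration}) that the $L$-grading shifts of $R/(x,y)$ split-generate, pass through \eqref{HMFsingEquivalence}, and then build every shift $R(l)/(x,y)$ (for $l$ running over representatives of $L/\Z\vec{c}$) out of the basic objects by explicit cones. But you leave exactly that verification as ``checking that these suffice'', and this is where all the work in the paper's proof lies. The sequences you name (peeling off $x$ or $y$ one power at a time, and $0 \rightarrow R/(x) \rightarrow R/(x) \rightarrow R/(x,y) \rightarrow 0$) do produce the shifts $R(a\vec{x}+b\vec{y})/(x,y)$ for $2 \leq a \leq p$, $2 \leq b \leq q$ and most boundary values, but the remaining classes are genuinely delicate: $R(\vec{x}+\vec{y})/(x,y)$ is obtained in the paper only by using $\obj{\fac}{}[1]$ (as the cokernel of $(y\ x): R(\vec{x})/(x)\oplus R(\vec{y})/(y) \rightarrow R(\vec{x}+\vec{y})/(xy)$), and $R(\vec{c})/(x,y)$ requires presenting $R/(x,y^{pq-1})$ as a cone on $y^{pq-1}$ between two $[\cdot]$-shifts of $\obj{x}{p-1}$ and then a further cone \eqref{Cone2}. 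Without producing these last few shifts the argument is incomplete, since the quotient $L/\Z\vec{c} \cong \Z/(pq-1)$ must be exhausted in full.

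Your primary route through the semi-orthogonal decomposition of \cref{rmkOrlov} is a genuinely different strategy, but as written it has two unproved identifications. First, \cref{rmkOrlov} is stated in the paper as motivation only: it is not shown there that the images of the twisted structure sheaves of the three points of $Y$ under Orlov's embedding $D^b(Y) \hookrightarrow D^b_\mathrm{sing}(\gr R)$ are literally the objects $\obj{x}{i}$, $\obj{y}{j}$, $\obj{\fac}{}$ (Orlov's functor involves a choice of window, not just ``regard the sheaf as an $R$-module''), so generation of the $D^b(Y)$ piece does not follow from the remark; incidentally the isotropy groups at the stacky points have orders $p-1$ and $q-1$, not orders dividing $p$ and $q$, which suggests the counting you defer has not been carried out. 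Second, the paper explicitly says only that $\langle \obj{0}{i,j} \rangle$ is \emph{related} to Orlov's $\mathcal{C}$; your claim that the grading shifts of the skyscraper appearing in $\mathcal{C}$ are cones of the morphisms $x^{I-i}y^{J-j}$ of \cref{BMorphisms6} between the $\obj{0}{i,j}$ alone needs the same kind of explicit bookkeeping as the direct route, and (as the constructions above indicate) some skyscraper shifts are only reachable using the $\obj{x}{i}$, $\obj{y}{j}$, $\obj{\fac}{}$ as well, so one must check that those shifts do not occur in Orlov's window. Either route can be completed, but in both cases the missing content is the concrete cone-by-cone construction that the paper carries out; as it stands the proposal is a plan rather than a proof.
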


\begin{rmk}
As mentioned in \cref{sscOutline}, the $\Split$'s can be removed from this statement (and this is what we need to prove \cref{Thm1}) using the fact that the objects in $\mathcal{B}$ form a full exceptional collection in $\Tw \mathcal{B}$, so that the category is already idempotent complete by \cite[Remark 5.14]{SeidelBook}.
\end{rmk}

For a triangulated category $\mathcal{C}$ and a collection $V$ of objects in $\mathcal{C}$, let $\langle V \rangle$ denote the smallest full triangulated subcategory of $\mathcal{C}$ which contains the objects in $V$ and is closed under isomorphism, and let superscript $\pi$ denote idempotent completion.  We'll say that $V$ \emph{split-generates} $\mathcal{C}$ if the functor $\langle V \rangle^\pi \rightarrow \mathcal{C}^\pi$ induced by the obvious inclusion of $\langle V \rangle$ in $\mathcal{C}$ is an equivalence.

The content of \cref{BGenerates} is that the set
\[
V = \{\obj{x}{i}, \obj{y}{j}, \obj{\fac}{}, \obj{0}{i,j}\}
\]
split-generates $\mathcal{C} = \mathrm{HMF}(\C^2, \Gamma_\w, \w)$.  The key to establishing this is the following application of a result of Polishchuk--Vaintrob:

\begin{lem}[{\cite[Proposition 2.3.1]{PolishchukVaintrobCFT}}]
\label{PVGeneration}
The category $\mathrm{HMF}(\C^2, \Gamma_\w, \w)$ is split-generated by the $L$-grading shifts of the stabilisation of the module $R/(x, y)$.\hfill$\qed$
\end{lem}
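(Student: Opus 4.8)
The plan is to reduce the statement to showing that the full thick (triangulated, summand-closed) subcategory $\mathcal T$ of $\mathrm{HMF}(\C^2, \Gamma_\w, \w)$ generated by the $L$-grading shifts of (the stabilisation of) $k \coloneqq R/(x,y)$ exhausts the category up to summands. There are two natural ways to attack this, and I would keep both in mind: a geometric one via the localisation theory of singularity categories, close in spirit to Orlov's semi-orthogonal decomposition recalled in \cref{rmkOrlov}; and a categorical one via compact generators, which is the method of Dyckerhoff and Polishchuk--Vaintrob and which most cleanly yields the \emph{split}-generation in the statement.

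For the geometric route I would use \eqref{HMFsingEquivalence} to work in $D^b_\mathrm{sing}(\gr R) \simeq D^b_\mathrm{sing}(\mathcal X)$, where $\mathcal X \coloneqq [\w^{-1}(0)/\Gamma_\w]$. Because $\w$ has an isolated singularity at the origin, $\mathcal X$ is regular away from the single point $x_0$ lying over it, and $\mathcal X \setminus \{x_0\}$ is the smooth (in fact zero-dimensional) stack $Y$ of \cref{rmkOrlov}. Since $(x,y)$ is the unique graded maximal ideal of $R$, the only graded simple $R$-module is $k$ up to $L$-grading shift, so every finite-length graded module is an iterated extension of grading shifts of $k$; hence $\mathcal T$ is exactly the thick subcategory generated by the finite-length modules. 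The Serre quotient of $\gr R$-modules by the finite-length ones is $\mathrm{coh}(Y)$, and this identification descends to an equivalence $D^b_\mathrm{sing}(\gr R)/\mathcal T \simeq D^b_\mathrm{sing}(Y)$; as $Y$ is smooth the right-hand side vanishes, so $\mathcal T = D^b_\mathrm{sing}(\gr R)$ --- which in fact proves generation, not just split-generation. The step requiring care is the claim that the Serre quotient of abelian categories descends to this equivalence of bounded derived categories and stays compatible with perfect complexes in the stacky setting; this is precisely the ingredient behind \cite[Theorem 39]{Orlov09} and its extension \cite[Proposition 3.19]{PolishchukVaintrobSingularityCategoriesForStacks}.

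For the categorical route I would instead enlarge $\mathrm{mf}(\C^2, \Gamma_\w, \w)$ to the dg-category of $L$-graded matrix factorisations whose free modules are allowed infinite rank. The equivariant analogue of Dyckerhoff's analysis \cite{DyckerhoffCompactGenerators} shows its homotopy category is compactly generated, with compact objects the idempotent completion of $\mathrm{HMF}(\C^2, \Gamma_\w, \w)$. By Neeman's generation theorem a set of compact objects generates such a category as soon as it has trivial right orthogonal, and the compact objects then coincide with the thick subcategory it split-generates; so it is enough to prove that an infinite-rank matrix factorisation $M$ with $\Hom^\bullet(k^\mathrm{stab}(l), M) = 0$ for all $l \in L$ is contractible. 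Here one uses that $k^\mathrm{stab}$ is built from the Koszul complex on $x$ and $y$, so that $\Hom^\bullet(k^\mathrm{stab}, M)$ computes a $2$-periodic stabilisation of $k \otimes^{\mathbf L}_R M$; vanishing for all grading shifts forces $\mathrm{Tor}^R_\bullet(k, M)$ to be eventually zero, and a Nakayama-type argument then makes the module underlying $M$ of finite projective dimension, hence perfect, hence zero.

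I expect this last step --- running the $\mathrm{Tor}$-vanishing/Nakayama argument for arbitrary rather than finitely generated modules --- to be the main obstacle: for finitely generated modules it is elementary, but the infinite-rank case genuinely needs the isolated-singularity hypothesis (which forces $R$ to be regular away from $(x,y)$, so that finite projective dimension ``at the origin'' is the only obstruction to perfectness) together with the careful bookkeeping carried out in \cite{DyckerhoffCompactGenerators}. In either approach the essential content is already in the cited work of Orlov and of Polishchuk--Vaintrob, and for $n=2$ both routes actually deliver the stronger conclusion that the grading shifts of $R/(x,y)$ generate $\mathrm{HMF}(\C^2, \Gamma_\w, \w)$ outright.
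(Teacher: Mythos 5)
The paper does not actually prove this lemma: it is imported verbatim from Polishchuk--Vaintrob \cite[Proposition 2.3.1]{PolishchukVaintrobCFT}, with \cref{GenerationRefs} noting that it is a routine equivariant modification of the non-equivariant results of Schoutens, Dyckerhoff, Seidel and Murfet. Your second (``categorical'') route is essentially that cited argument---pass to infinite-rank $L$-graded matrix factorisations, use compact generation, and kill the right orthogonal of the shifts of $k^{\mathrm{stab}}$ using the isolated-singularity hypothesis---so, modulo the details you defer to \cite{DyckerhoffCompactGenerators}, it does establish the split-generation asserted in the lemma; your geometric route via the localisation picture behind \cref{rmkOrlov} is a reasonable alternative.

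There is, however, one genuine error, albeit in a claim stronger than the lemma: both of your assertions that one in fact gets \emph{generation} rather than split-generation are wrong. In the geometric route the subcategory $\mathcal T$ you introduce is by definition closed under summands, and the localisation theory you invoke only ever controls thick closures (the kernel of restriction to the punctured stack $Y$ is the thick closure of the objects supported at the origin, and the quotient is identified with $D^b_\mathrm{sing}(Y)$ only up to idempotent completion in general), so the conclusion is split-generation, not generation. More importantly, plain generation is genuinely false for loop and chain polynomials: as recorded in \cref{rmkGenVsSplit}, the category has a full exceptional collection of size $pq$, so $K_0(\mathrm{mf}(\C^2, \Gamma_\w, \w))$ is free of rank $pq$, whereas the classes of the $R(l)/(x,y)$ span a subgroup of rank at most $|L/\Z\vec{c}| = pq-1$; \cref{rmkGenVsSplitChain} gives the analogous obstruction in the chain case. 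Only for Brieskorn--Pham polynomials do the grading shifts of $R/(x,y)$ generate outright, so your closing sentence should be restricted to that case.
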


\begin{rmk}
\label{GenerationRefs}
The cited result is a simple modification of the non-equivariant case, previously obtained by several authors including Schoutens \cite{Schoutens}, Dyckerhoff \cite[Corollary 5.3]{DyckerhoffCompactGenerators}, Seidel \cite[Lemma 12.1]{SeidelGenusTwo} (building on work of Orlov \cite{Orlov11}), and Murfet \cite[Proposition A.2]{KellerMurfetVandenBergh}.
\end{rmk}
\begin{proof}[Proof of \cref{BGenerates}]
By \cref{PVGeneration} it suffices to show that under the equivalence \eqref{HMFsingEquivalence} the category $\langle V \rangle$ contains all of the $L$-grading shifts of $R/(x, y)$.  In other words, it is enough to prove that for all $l$ in $L$ the $L$-graded $R$-module $R(l)/(x, y)$ can be built from the objects
\[
R((i+1-p)\vec{x})/(x) \text{, } R((j+1-q)\vec{y})/(y) \text{, } R/(\fac) \text{, and } R((i+1)\vec{x}+(j+1)\vec{y})/(x^i, y^j)
\]
with $1 \leq i \leq p-1$ and $1 \leq j \leq q-1$, by taking cones and shifts (in the triangulated category sense, rather than in the $L$-grading).  Since $[2]$ is equivalent to $(\vec{c})$, we actually only need consider $l$ in a set of representatives of $L/\Z\vec{c}$.

For any $1 \leq i \leq p-1$ and $1 \leq j \leq q-1$ we have a morphism (of $L$-graded $R$-modules)
\begin{equation}
\label{Cone1}
R(i\vec{x}+j\vec{y})/(x^{i-1}, y^{j-1}) \xrightarrow{x} R((i+1)\vec{x}+j\vec{y})/(x^i, y^{j-1})
\end{equation}
whose cone---which is just the cokernel in this case---is the module $R((i+1)\vec{x}+j\vec{y})/(x, y^{j-1})$.  Both objects in \eqref{Cone1} lie in $V$ unless $i$ or $j$ is $1$, in which case the offending objects are zero, so we conclude that this cone lies in $\langle V \rangle$.  Similarly $R((i+1)\vec{x}+(j+1)\vec{y})/(x, y^j)$ is in $\langle V \rangle$, and hence
\[
R((i+1)\vec{x}+(j+1)\vec{y})/(x, y) \cong \Cone \big( R((i+1)\vec{x}+j\vec{y})/(x, y^{j-1}) \xrightarrow{y} R((i+1)\vec{x}+(j+1)\vec{y})/(x, y^j) \big)
\]
is also in $\langle V \rangle$.  This gives $(p-1)(q-1)$ of the $pq-1$ objects we need.

Next consider the extension
\[
0 \rightarrow R((i+1)\vec{x}+\vec{y})/(x) \xrightarrow{y^j} R((i+1)\vec{x}+(j+1)\vec{y})/(x) \rightarrow R((i+1)\vec{x}+(j+1)\vec{y})/(x, y^j) \rightarrow 0.
\]
The outer terms are in $\langle V \rangle$ (the first is $\obj{x}{i}[2]$ and the last is built from $R(a\vec{x}+b\vec{y})/(x,y)$ for $a=i+1$ and $b=2, 3, \dots, j+1$ by taking cones), so the middle term is in $\langle V \rangle$.  In particular, taking $j=q-1$ we see that
\[
R(i\vec{x})/(x) = R((i+1)\vec{x}+q\vec{y})[-2]/(x)
\]
lies in $\langle V \rangle$.  If $i$ is at least $2$ then $R(i\vec{x}+\vec{y})/(x) = \obj{x}{i-1}[2]$ is also in $\langle V \rangle$, and hence so is
\[
R(i\vec{x}+\vec{y})/(x, y) \cong \Cone \big( R(i\vec{x})/(x) \xrightarrow{y} R(i\vec{x}+\vec{y})/(x) \big).
\]
One can make a similar argument with the roles of $x$ and $y$ interchanged to construct $R(\vec{x}+j\vec{y})/(x, y)$ when $2 \leq j \leq p-1$.

So far we have thus seen that $R(a\vec{x}+b\vec{y})/(x, y)$ lies in $\langle V \rangle$ for $1 \leq a \leq p$ and $1 \leq b \leq q$, except for the cases $(a, b) = (1, 1)$, $(1, q)$ and $(p, 1)$.  If we can fill in these missing three cases (the latter two are in fact equivalent---both correspond to $R(\vec{c})/(x, y)$) then we will have constructed shifts of $R/(x,y)$ by representatives of each class in $L/\Z\vec{c}$, and will therefore be done.

To build $R(\vec{x}+\vec{y})/(x, y)$ note that it is the cokernel of
\[
R(\vec{x})/(x) \oplus R(\vec{y})/(y) \xrightarrow{(\begin{smallmatrix}y & x\end{smallmatrix})} R(\vec{x}+\vec{y})/(xy).
\]
The two summands in the domain were constructed above, whilst the codomain is $\obj{\fac}{}[1]$.  Finally, to get $R(\vec{c})/(x, y)$ observe that $R/(x,y)$ is the cokernel of
\begin{equation}
\label{Cone2}
R(-\vec{y})/(x, y^{pq-2}) \xrightarrow{y} R/(x, y^{pq-1}).
\end{equation}
The domain can be built from $R(-b\vec{y})/(x, y)$ for $b=1, \dots, pq-2$ by taking cones, and these objects are all (up to repeated applications of $[\pm2]$) ones that we have already constructed.  The codomain, meanwhile, is given by
\[
\Cone \big( R(-(p-1)\vec{c})/(x) \xrightarrow{y^{pq-1}} R/(x) \big),
\]
and the two terms inside the cone are $\obj{x}{p-1}[-2(p-1)]$ and $\obj{x}{p-1}$.  This means that both objects in \eqref{Cone2} lie in $\langle V \rangle$, and hence so does the cokernel $R/(x,y)$.  Shifting by $[2]$ gives the object $R(\vec{c})/(x, y)$ that we need.
\end{proof}

\begin{rmk}
\label{rmkGenVsSplit}
We proved that $\mathcal{B}$ generates $\mathrm{mf}(\C^2, \Gamma_\w, \w)$ by showing that it generates the objects $R(l)/(x, y)$, which \emph{split-}generate the category, and then invoking the fact that $\Tw \mathcal{B}$ is idempotent complete.  The $R(l)/(x,y)$ themselves cannot possibly generate (as opposed to split-generate), for the following reason: $\mathrm{mf}(\C^2, \Gamma_\w, \w)$ has a full exceptional collection of size $pq$, so its Grothendieck group is free of rank $pq$, whereas the span of the $R(l)/(x,y)$ has rank at most $|L/\Z\vec{c}| = pq-1$.
\end{rmk}

As a corollary of \cref{BGenerates}, we obtain:

\begin{thm}[\cref{Thm2}, loop polynomial case]
The object
\[
\mathcal{E} \coloneqq \bigg( \bigoplus_{\substack{i=1, \dots, p-1\\j=1, \dots, q-1}} \obj{0}{i,j} \bigg) \oplus \bigg( \bigoplus_{i=1}^{p-1} \obj{x}{i}[3] \bigg) \oplus \bigg( \bigoplus_{j=1}^{q-1} \obj{y}{j}[3] \bigg) \oplus \obj{w}{}[3]
\]
is a tilting object for $\mathrm{mf}(\C^2, \Gamma_\w, \w)$.
\end{thm}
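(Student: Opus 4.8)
The plan is to obtain this statement as a formal consequence of \cref{LoopEndAlgebra} and \cref{BGenerates}, the real content of the argument having already been carried out there. Write $E_1, \dots, E_{pq}$ for the basic objects $\obj{0}{i,j}$, $\obj{x}{i}[3]$, $\obj{y}{j}[3]$, $\obj{\fac}{}[3]$ spanning $\mathcal{B}$, so that $\mathcal{E} = \bigoplus_k E_k$. Since $\hom^\bullet$ is additive in each variable we have $\End^\bullet(\mathcal{E}) = \bigoplus_{k,l} \hom^\bullet(E_k, E_l)$, and taking cohomology gives $\End^i(\mathcal{E}) \cong \bigoplus_{k,l} \Hom^i(E_k, E_l)$, the $\Hom$'s computed in $\mathrm{HMF}(\C^2, \Gamma_\w, \w)$. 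By \cref{LoopEndAlgebra} the right-hand side is the path algebra of the quiver of \cref{figLoopQuiver} when $i = 0$ and vanishes for all $i \neq 0$, which is exactly the first half of the tilting condition.

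For the second half, suppose $X$ is an object of $\mathrm{mf}(\C^2, \Gamma_\w, \w)$ with $\hom^\bullet(\mathcal{E}, X) \simeq 0$. Then $\hom^\bullet(E_k, X)$ is acyclic for every $k$, i.e.~$\Hom^\bullet(E_k, X) = 0$ in $\mathrm{HMF}(\C^2, \Gamma_\w, \w)$ for each basic object $E_k$. The full subcategory ${}^\perp X$ consisting of those $Z$ with $\Hom^\bullet(Z, X) = 0$ is triangulated and closed under isomorphism, and since it contains all the $E_k$ it contains $\langle V \rangle$. By \cref{BGenerates}---together with the remark following it, that the idempotent completions may be dropped because the objects of $\mathcal{B}$ form a full exceptional collection in $\Tw\mathcal{B}$---the inclusion $\langle V \rangle \hookrightarrow \mathrm{HMF}(\C^2, \Gamma_\w, \w)$ is an equivalence, so ${}^\perp X$ is the whole category. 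In particular $X \in {}^\perp X$, so $\Hom^0(X, X) = 0$, which forces $\mathrm{id}_X = 0$ and hence $X \cong 0$. This completes the verification.

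I do not expect any genuine obstacle: the theorem is bookkeeping layered on top of the morphism computations assembled in \cref{LoopEndAlgebra} and the split-generation result \cref{BGenerates}. The only two points worth stating carefully are that the endomorphism \emph{complex} of $\mathcal{E}$ has cohomology concentrated in degree $0$---precisely what \cref{LoopEndAlgebra} records, once one notes that $\mathcal{E}$ is the direct sum of the basic objects---and the elementary fact that an object right-orthogonal to a generating collection, equivalently to the whole triangulated category, must be zero.
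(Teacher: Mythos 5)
Your proof is correct and follows the paper's own argument essentially verbatim: degree-zero concentration of $\End^\bullet(\mathcal{E})$ from \cref{LoopEndAlgebra}, and the generation statement \cref{BGenerates} (with the idempotent completions removed via the exceptional-collection remark) forcing any right-orthogonal object $X$ to satisfy $\mathrm{id}_X = 0$. The extra bookkeeping about ${}^\perp X$ being triangulated is a harmless elaboration of what the paper leaves implicit.
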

\begin{proof}
We need to show that $\End^i(\mathcal{E}) = 0$ for all $i\neq 0$ and that $\hom^\bullet(\mathcal{E}, X) \simeq 0$ implies $X \cong 0$.
The first statement follows immediately from \cref{LoopEndAlgebra}, whilst the second is a consequence of \cref{BGenerates}: if $\hom^\bullet(\mathcal{E}, X) \simeq 0$ then there are no non-zero morphisms from $\langle V \rangle^\pi$ to $X$ in $\mathrm{HMF}(\C^2, \Gamma_\w, \w)$, which forces $X$ to be quasi-isomorphic to $0$.
\end{proof}


\section{A-model for loop polynomials}
\label{AModel}

\subsection{A resonant Morsification}

We are now interested in the polynomial $\wt = \xt^p\yt + \xt\yt^q$ as a map $\C^2 \rightarrow \C$.  To construct the category $\mathcal{A}$ we should Morsify $\wt$ by adding a small perturbation, fix a regular value $*$, then pick a \emph{distinguished basis of vanishing paths} $(\gamma_1, \dots, \gamma_N)$ in the base $\C$, where $\gamma_i$ is a smooth embedded path from $*$ the $i$th critical value.  We require that the $\gamma_i$ are pairwise disjoint except for their common initial point $\gamma_i(0) = *$, that the vectors $\dot{\gamma}_i(0)$ in $T_*\C$ are non-zero and distinct, and that the corresponding directions are in clockwise order as $i$ increases from $1$ to $N$ (we are free to choose the starting direction for this clockwise ordering).  We then consider the corresponding vanishing cycles in the fibre $\Sigma$ over $*$ (strictly we should take $\Sigma$ to be the Liouville completion of the Milnor fibre, but this is equivalent in our case), and define $\mathcal{A}$ to be the directed $A_\infty$-category on these cycles whose morphisms and compositions in the allowed direction are given by those in the compact Fukaya category $\mathcal{F}(\Sigma)$.  Note that we are free to modify the vanishing cycles by Hamiltonian isotopy in order to compute $\mathcal{A}$ up to quasi-equivalence.

In order to implement this, we first consider the perturbation
\[
\wt_\eps \coloneqq \wt - \eps \xt\yt = \xt\yt(\xt^{p-1}+\yt^{q-1}-\eps)
\]
of $\wt$, where $\eps$ is a small positive real number; in analogy with \cref{BModel} we shall denote $\xt^{p-1}+\yt^{q-1}$ by $\fact$.  We call this a resonant Morsification, since its critical points are Morse but the critical values are not all distinct.  In fact, the critical points fall into four types:
\begin{enumerate}[(i)]
\item\label{crit1} $\xt^{p-1}=\eps$, $\yt=0$
\item\label{crit2} $\xt=0$, $\yt^{q-1}=\eps$
\item\label{crit3} $\xt=\yt=0$
\item\label{crit4} $(\xt^{p-1}, \yt^{q-1}) = \frac{\eps}{pq-1}(q-1, p-1)$.
\end{enumerate}
The critical points of the types (\ref{crit1})--(\ref{crit3}) all lie over the critical value zero, whilst for type (\ref{crit4}) the critical value is $-\xt\yt \eps(p-1)(q-1)/(pq-1)$ so is non-zero and lies on the ray through $-\xt\yt$.

We denote the unique positive real critical point of type (\ref{crit4}) by $(\xt^+_\mathrm{crit}, \yt^+_\mathrm{crit})$, with corresponding critical value $c_\mathrm{crit}$ (this is \emph{negative} real).  Letting $\zeta$ and $\eta$ denote the roots of unity
\[
\label{RootsOfUnity}
\zeta = e^{2\pi i/(p-1)} \quad \text{and} \quad \eta = e^{2\pi i/(q-1)},
\]
the full set of type (\ref{crit4}) critical points is then given by
\[
\{(\zeta^l\xt^+_\mathrm{crit}, \eta^m\yt^+_\mathrm{crit}) : 0 \leq l \leq p-2 \text{, } 0 \leq m \leq q-2\}.
\]
The critical value corresponding to $(\zeta^l\xt^+_\mathrm{crit}, \eta^m\yt^+_\mathrm{crit})$ is $\zeta^l\eta^mc_\mathrm{crit}$, so there are $\gcd(p-1, q-1)$ critical points in each of these critical fibres.

We now fix our regular fibre $\Sigma$ to be $\wt_\eps^{-1}(-\delta)$ where $\delta$ is a positive real number much less than $\eps$ (in other words, we take $*=-\delta$).  The condition $\delta \ll \eps$ is to allow us to understand $\Sigma$ as a smoothing of $\wt_\eps^{-1}(0)$.  For the critical points of types (\ref{crit1})--(\ref{crit3}) we choose the vanishing path given by the straight line segment from $-\delta$ to $0$.  For the critical point $(\zeta^l\xt^+_\mathrm{crit}, \eta^m\yt^+_\mathrm{crit})$, meanwhile, we define the \emph{preliminary vanishing path} $\gamma_{l,m}$ by following the circular arc $-\delta e^{i\theta}$ as $\theta$ increases from $0$ to
\[
\theta_{l,m} \coloneqq 2\pi \left(\frac{l}{p-1}+\frac{m}{q-1}\right)
\]
and then following the radial straight line segment from $-\zeta^l\eta^m\delta$ to $\zeta^l\eta^mc_\mathrm{crit}$.  As the name suggests, we will later modify these preliminary vanishing paths (they currently do not form a distinguished basis since they intersect and overlap each other), but they serve an important intermediate role.

\Cref{figCriticalPoints} shows the critical values of $\wt_\eps$, the vanishing path for the type (\ref{crit1})--(\ref{crit3}) critical points, and the preliminary vanishing paths for $(l,m)=(0,0)$, $(1, 0)$ and $(1, 2)$, all in the case $(p,q)=(4,6)$.  We have slightly separated the arcs for clarity---really they both have radius $\delta$.
\begin{figure}[ht]
\centering
\begin{tikzpicture}[blob/.style={circle, draw=black, fill=black, inner sep=0, minimum size=\blobsize, line width=0.5mm}, critval/.style={cross out, draw=black, fill=black, inner sep=0, minimum size=\crosssize, line width=0.5mm}]
\def\blobsize{1mm}
\def\crosssize{1mm}
\def\masterDELTA{0.7}
\def\DELTA{\masterDELTA}
\def\r{3}

\draw (-\DELTA, 0) node[blob]{};
\draw (0, 0) node[critval]{};

\foreach \a in {1, ..., 15}
{
\draw ($-cos(360*\a/15)*(\r, 0)+sin(360*\a/15)*(0, \r)$) node[critval]{};
}

\def\l{1}
\def\m{2}
\def\theta{360*(\l/3+\m/5)}
\def\DELTA{\masterDELTA*0.97}
\draw ($-cos(\theta)*(\r, 0)-sin(\theta)*(0, \r)$) -- ($-cos(\theta)*(\DELTA, 0)-sin(\theta)*(0, \DELTA)$);
\draw ($-cos(\theta)*(\DELTA, 0)-sin(\theta)*(0, \DELTA)$) arc(\theta+180:180:\DELTA);
\draw ($-cos(\theta)*(\DELTA, 0)-sin(\theta)*(0, \DELTA)-(0, 0.1)$) node[anchor=south west]{\small$\gamma_{1, 2}$};

\def\l{1}
\def\m{0}
\def\theta{360*(\l/3+\m/5)}
\def\DELTA{\masterDELTA*1.05}
\draw ($-cos(\theta)*(\r, 0)-sin(\theta)*(0, \r)$) -- ($-cos(\theta)*(\DELTA, 0)-sin(\theta)*(0, \DELTA)$);
\draw ($-cos(\theta)*(\DELTA, 0)-sin(\theta)*(0, \DELTA)$) arc(\theta+180:180:\DELTA);
\draw ($-cos(\theta)*2*(\DELTA, 0)-sin(\theta)*2*(0, \DELTA)-(0.2, 0.1)$) node[anchor=north]{\small$\gamma_{1, 0}$};

\draw (-\r, 0) -- (-\DELTA, 0);
\draw (0, 0) -- (-\DELTA, 0);

\draw [decorate,decoration={brace,amplitude=5pt}] (-\r, 0.1) -- (-\DELTA, 0.1);
\draw ($(-\r/2, 0.15)+(-\DELTA/2, 0.15)$) node[anchor=south]{\small$\gamma_{0,0}$};

\draw (-\DELTA, 0) node[anchor=north east]{\small$-\delta$};
\draw (0, -0.02) node[anchor=north]{\small$0$};

\end{tikzpicture}
\caption{The critical values of $\wt_\eps$ (crosses), the vanishing path for critical value $0$, and three of the preliminary vanishing paths, when $(p,q)=(4,6)$.\label{figCriticalPoints}}
\end{figure}
Note that $\theta_{l,m}$ may be greater than $2\pi$, in which case $\gamma_{l,m}$ covers more than a full circle, but these paths are difficult to indicate on a diagram.  Note also that different values of $(l, m)$ may give rise to different preliminary vanishing paths, even if the critical values are the same.

\subsection{The zero-fibre and its smoothing $\Sigma$}

The fibre of $\wt_\eps$ over zero has three components: the lines $\{\xt=0\}$ and $\{\yt=0\}$, and the smooth curve $\{\fact = \eps\}$.  Schematically the picture is as in \cref{figZeroFibre}.  The crosses denote transverse intersections between the components, and the dotted line where the planes appear to meet is to indicate that they are actually disjoint in $\C^2$ except for the intersection at the origin.
\begin{figure}[ht]
\centering
\begin{tikzpicture}[blob/.style={cross out, draw=black, fill=black, inner sep=0, minimum size=\blobsize, line width=0.5mm}, noblob/.style={inner sep=0, minimum size=0}]
\def\blobsize{1.5mm}

\begin{scope}[xshift=-0.2cm]
\draw (-2.5, -0.75) -- (1, 0.75) -- (8.5, 0.75) -- (5, -0.75) -- cycle;
\draw[name path global=plane1] (-1.5, -2) -- (2, -0.5) -- (2, 5.5) -- (-1.5, 4) -- cycle;
\draw[fill=white, opacity=0.8] (-1.5, -0.75) -- (5, -0.75) -- (8.5, 0.75) -- (2, 0.75) -- (2, 5.5) -- (-1.5, 4) -- cycle;
\draw[dotted] (-1.5, -0.75) -- (2, 0.75);
\end{scope}

\draw[fill=white, fill opacity=0.8] plot [smooth, tension=1] coordinates {(2.5, 4.5) (1, 3.9) (0, 3.5) (1, 3) (0.2, 2.5) (1, 2) (0, 1.5) (1, 1) (1.5, 0) (2, 1) (2.5, 0.2) (3, 1) (3.5, 0.2) (4, 1) (4.5, 0) (5.2, 1) (6.5, 2)};

\draw plot [smooth, tension=1] coordinates {(3.5, 4.5) (3.2, 3.9) (4.5, 4.5)};
\draw plot [smooth, tension=1] coordinates {(5.2, 4.5) (4.2, 3.1) (6.5, 4)};
\draw plot [smooth, tension=1] coordinates {(6.5, 3.5) (5, 2.2) (6.5, 2.5)};

\begin{scope}[xshift=3.5cm, yshift=2cm, rotate=-20]
\draw (0, 0) to [bend left=30] (1, 0);
\draw (-0.1, 0.075) to [bend right=10] (0, 0) to [bend right=20] (1, 0) to [bend right=10] (1.1, 0.075);
\end{scope}

\begin{scope}[xshift=1.5cm, yshift=2.2cm, rotate=-20]
\draw[name path global=curve1] (0, 0) to [bend left=30] (1, 0);
\draw[name path global=curve2] (-0.1, 0.075) to [bend right=10] (0, 0) to [bend right=20] (1, 0) to [bend right=10] (1.1, 0.075);

\draw[name intersections={of=plane1 and curve1}] (intersection-1) node[noblob](pt1){};
\draw[name intersections={of=plane1 and curve2}] (intersection-1) node[noblob](pt2){};
\draw (pt1) -- (pt2);
\end{scope}

\begin{scope}[xshift=2.2cm, yshift=3.3cm, rotate=-20]
\draw (0, 0) to [bend left=30] (1, 0);
\draw (-0.1, 0.075) to [bend right=10] (0, 0) to [bend right=20] (1, 0) to [bend right=10] (1.1, 0.075);
\end{scope}

\draw (-0.3, 0.8) node[anchor=east]{$\xt=0$};
\draw (5.8, 0.6) node[anchor=north]{$\yt=0$};
\draw (4.2, 2.55) node{$\fact=\eps$};

\draw (0, 1.5) node[blob]{};
\draw (0.2, 2.5) node[blob]{};
\draw (0, 3.5) node[blob]{};

\draw (0, 0) node[blob]{};

\draw (1.5, 0) node[blob]{};
\draw (2.5, 0.2) node[blob]{};
\draw (3.5, 0.2) node[blob]{};
\draw (4.5, 0) node[blob]{};

\draw [decorate,decoration={brace,amplitude=7pt,mirror}] (1.3, -0.05) -- (4.7, -0.05);
\draw (3, -0.2) node[anchor=north]{\small$p-1$};
\draw [decorate,decoration={brace,amplitude=7pt}] (-0.05, 1.3) -- (-0.05, 3.7);
\draw (-0.28, 2.45) node[anchor=east]{\small$q-1$};

\end{tikzpicture}
\caption{The fibre $\wt_\eps^{-1}(0)$ for loop polynomials.\label{figZeroFibre}}
\end{figure}
In $\Sigma$, each of the nodes is smoothed to a thin neck whose waist curve is the corresponding vanishing cycle.  We denote these vanishing cycles by $\vc{\yt\fact}{l}$, $\vc{\xt\fact}{m}$ and $\vc{\xt\yt}{}$ for $l=0, \dots, p-2$ and $m=0, \dots, q-2$, corresponding to critical points $(\zeta^l\eps^{1/(p-1)}, 0)$, $(0, \eta^m\eps^{1/(q-1)})$ and $(0, 0)$ respectively.

\begin{rmk}
We can compute the genus and number of punctures of $\Sigma$ as follows.  The punctures correspond to boundary components at infinity, where the defining equation looks like $\xt^p\yt+\xt\yt^q=0$.  The lines $\{\xt=0\}$ and $\{\yt=0\}$ each give rise to a boundary component, whilst $\{\xt^{p-1}+\yt^{q-1}=0\}$ gives $\gcd(p-1, q-1)$ components.  We deduce
\[
\# \text{ punctures of } \Sigma = \gcd(p-1,q-1)+2.
\]
The $pq$ vanishing cycles form a basis for $\mathrm{H}_1(\Sigma; \Z)$, whose rank is
\[
2g(\Sigma) + \# \text{ punctures} - 1,
\]
so we obtain
\[
g(\Sigma) = \frac{1}{2}\left(pq-\gcd(p-1,q-1)-1\right).
\]
\end{rmk}

If $\delta$ is chosen sufficiently small then the monodromy of parallel transport around the circle of radius $\delta$ is supported in small neighbourhoods of these $p+q-1$ curves, and is simply the product of the Dehn twists in them.  It is not \emph{strictly} true that the monodromy is supported in these neighbourhoods, but as explained in \cite[Section 19]{SeidelThesis} it can be made so by a small deformation of the fibration, which does not affect the categories and which we will not explicitly notate.  After deleting these neighbourhoods (and corresponding neighbourhoods in the other fibres) we may therefore trivialise the fibration $\wt_\eps$ over the disc of radius $\delta$, and identify each fibre with the curve $\Sigma'$ obtained from $\wt_\eps^{-1}(0)$ by removing neighbourhoods of the critical points marked in \cref{figZeroFibre}.  Equivalently, we may think of $\Sigma'$ as being obtained from $\Sigma$ by removing the neck regions.  Concretely, it consists of: a complex line (the $\xt$-axis) with small balls around the origin and the $(p-1)$th roots of $\eps$ removed; a complex line (the $\yt$-axis) with small balls around the origin and the $(q-1)$th roots of $\eps$ removed; and a $(p-1)(q-1)$-fold cover of the line $\{u+v=\eps\}$ with small balls about $(\eps, 0)$ and $(0, \eps)$ removed, with the covering map given by $(u, v) = (\xt^{p-1}, \yt^{q-1})$.  All of the interesting parallel transport occurs in the neck regions which we have deleted, and is described by `partial Dehn twists' which we explicitly describe later in a local model.

\subsection{The preliminary vanishing cycles}

Let $\vcpr{0}{l,m}$ denote the preliminary vanishing cycle in $\Sigma$ corresponding to the critical point $(\zeta^l\xt^+_\mathrm{crit}, \eta^m\yt^+_\mathrm{crit})$ and the preliminary vanishing path $\gamma_{l,m}$.  The goal of this subsection is to describe these cycles, by a combination of symmetry considerations and parallel transport computations.

Since $\wt_\eps$ has real coefficients, we can temporarily view it as a function $\R^2 \rightarrow \R$.  This function has a local minimum at $(\xt^+_\mathrm{crit}, \yt^+_\mathrm{crit})$, where it attains the value $c_\mathrm{crit} < 0$.  There are no critical values in the interval $(c_\mathrm{crit}, 0)$, so the level sets $\wt_\eps^{-1}(c)$ for $c$ in this range have a component which is a smooth loop encircling $(\xt^+_\mathrm{crit}, \yt^+_\mathrm{crit})$, and which shrinks down to this point as $c \downarrow c_\mathrm{crit}$.  As $c \uparrow 0$ this loop, which we'll denote by $\Lambda_c$, converges to the boundary of the region in the upper right quadrant of $\R^2$ that is bounded on the left by $\xt=0$, below by $\yt=0$, and above and to the right by $\fact=\eps$.  We'll denote this piecewise smooth limiting loop by $\Lambda_0$.

Now return from this purely real discussion to the full complex picture.  Symplectic parallel transport between the fibres of $\wt_\eps$ over a path $c(t)$ is described by the ODE
\begin{equation}
\label{ParTransODE}
\begin{pmatrix}\dot{\xt} \\ \dot{\yt}\end{pmatrix} = \frac{\dot{c}}{|\diff \wt_\eps|^2} \begin{pmatrix}\overline{\partial_{\xt}\wt_\eps} \\ \overline{\partial_{\yt}\wt_\eps}\end{pmatrix}.
\end{equation}
This obviously preserves the real part of the fibre when $c$ moves along the real axis, as it did in the previous paragraph, so we see that the loops $\Lambda_c$ are carried to one another by parallel transport.  In particular, $\Lambda_{-\delta}$ is exactly the preliminary vanishing cycle $\vcpr{0}{0,0}$.

Just as we viewed $\Sigma$ as a smoothing of $\wt_\eps^{-1}(0)$, we shall understand $\vcpr{0}{0,0} = \Lambda_{-\delta}$ as a smoothing of $\Lambda_0$.  In $\Sigma'$ it comprises: the real line segment joining the deleted ball about $0$ to the deleted ball about $\eps^{1/(p-1)}$ in the $\xt$-axis; the real line segment joining the deleted ball about $0$ to the deleted ball about $\eps^{1/(q-1)}$ in the $\yt$-axis; the positive real lift of the line segment joining the deleted balls about $(\eps, 0)$ and $(0, \eps)$ in $\{u+v=\eps\}$, under the covering map $(\xt, \yt) \mapsto (u, v)$ described above.  It enters three of the neck regions, namely those corresponding to $\vc{\yt\fact}{0}$, $\vc{\xt\fact}{0}$ and $\vc{\xt\yt}{}$, in each of which it is given by the positive real locus in $(\xt, \yt)$-coordinates.  This is indicated in \cref{figPrelimCycles}, where the deleted balls are indicated by the grey blobs and the three segments of $\vcpr{0}{0,0}$ are respectively the the horizontal dash-dotted line, the vertical dotted line, and the dotted diagonal arc.

\begin{figure}[ht]
\centering
\begin{tikzpicture}[blob/.style={circle, draw=gray, fill=gray, inner sep=0, minimum size=\blobsize, line width=0.5mm}, noblob/.style={inner sep=0, minimum size=0}, scale=1.25]
\def\blobsize{2mm}

\begin{scope}[xshift=-0.2cm, scale=0.9]
\draw (-2.5, -0.75) -- (1, 0.75) -- (8.5, 0.75) -- (5, -0.75) -- cycle;
\draw[name path global=plane1] (-1.5, -1.5) -- (2, 0) -- (2, 5.5) -- (-1.5, 4) -- cycle;
\draw[fill=white, opacity=0.8] (-1.5, -0.75) -- (5, -0.75) -- (8.5, 0.75) -- (2, 0.75) -- (2, 5.5) -- (-1.5, 4) -- cycle;
\draw[dotted] (-1.5, -0.75) -- (2, 0.75);
\end{scope}

\draw[line width=2pt, dash pattern=on 6pt off 2pt] (0, 0) to [bend right=10] (0.2, 2.5);
\draw[line width=2pt] (0, 0) to [bend left=15] (0, 3.5);
\draw[line width=2pt] (0, 0) to [bend left=20] (3.5, 0.2);

\draw[fill=white, fill opacity=0.8] plot [smooth, tension=1] coordinates {(2.5, 4.5) (1, 3.9) (0, 3.5) (1, 3) (0.2, 2.5) (1, 2) (0, 1.5) (1, 1) (1.5, 0) (2, 1) (2.5, 0.2) (3, 1) (3.5, 0.2) (4, 1) (4.5, 0) (5.2, 1) (6.5, 2)};

\draw plot [smooth, tension=1] coordinates {(3.5, 4.5) (3.2, 3.9) (4.5, 4.5)};
\draw plot [smooth, tension=1] coordinates {(5.2, 4.5) (4.2, 3.1) (6.5, 4)};
\draw plot [smooth, tension=1] coordinates {(6.5, 3.5) (5, 2.2) (6.5, 2.5)};

\begin{scope}[xshift=3.5cm, yshift=2cm, rotate=-20]
\draw (0, 0) to [bend left=30] (1, 0);
\draw (-0.1, 0.075) to [bend right=10] (0, 0) to [bend right=20] (1, 0) to [bend right=10] (1.1, 0.075);
\end{scope}

\begin{scope}[xshift=1.5cm, yshift=2.2cm, rotate=-20]
\draw[name path global=curve1] (0, 0) to [bend left=30] (1, 0);
\draw[name path global=curve2] (-0.1, 0.075) to [bend right=10] (0, 0) to [bend right=20] (1, 0) to [bend right=10] (1.1, 0.075);

\draw[name intersections={of=plane1 and curve1}] (intersection-1) node[noblob](pt1){};
\draw[name intersections={of=plane1 and curve2}] (intersection-1) node[noblob](pt2){};
\draw (pt1) -- (pt2);
\end{scope}

\begin{scope}[xshift=2.2cm, yshift=3.3cm, rotate=-20]
\draw (0, 0) to [bend left=30] (1, 0);
\draw (-0.1, 0.075) to [bend right=10] (0, 0) to [bend right=20] (1, 0) to [bend right=10] (1.1, 0.075);
\end{scope}

\draw[line width=2pt] (0, 3.5) .. controls (2.8, 2.8) .. (3.5, 0.2);
\draw[line width=2pt, dash pattern=on 6pt off 2pt] (0.2, 2.5) .. controls (1.7, 2.1) .. (1.5, 0);
\draw[line width=2pt, dash pattern=on 6pt off 2pt on 2pt off 2pt] (1.5, 0) -- (0, 0);
\draw[line width=2pt, dash pattern=on 2pt off 2pt] (0, 0) -- (0, 1.5) .. controls (1.3, 1.3) .. (1.5, 0);

\draw (0, 1.5) node[blob]{};
\draw (0.2, 2.5) node[blob]{};
\draw (0, 3.5) node[blob]{};

\draw (0, 0) node[blob]{};

\draw (1.5, 0) node[blob]{};
\draw (2.5, 0.2) node[blob]{};
\draw (3.5, 0.2) node[blob]{};
\draw (4.5, 0) node[blob]{};

\draw (0.9,1.5) node{\small$\vcpr{0}{0, 0}$};
\draw (2.05,1.35) node{\small$\vcpr{0}{0, 1}$};
\draw (1,3.5) node{\small$\vcpr{0}{2, 2}$};

\end{tikzpicture}
\caption{Schematic picture of some preliminary vanishing cycles in $\Sigma'$ for loop polynomials.\label{figPrelimCycles}}
\end{figure}

To compute the other $\vcpr{0}{l,m}$ we decompose the path $\gamma_{l,m}$ into its radial segment and its circular arc.  The map
\[
f_{l,m} : (\xt, \yt) \mapsto (\zeta^l \xt, \eta^m \yt)
\]
gives a symplectomorphism of $\C^2$ which $\wt_\eps$ intertwines with multiplication by $\zeta^l\eta^m$ on $\C$, so the curve $f_{l,m}(\vcpr{0}{0,0})$ is the vanishing cycle in the fibre over $-\zeta^l\eta^m\delta$ that corresponds to the critical point $(\zeta^l\xt^+_\mathrm{crit}, \eta^m\yt^+_\mathrm{crit})$ and the vanishing path given by the radial segment of $\gamma_{l,m}$.  This means that $\vcpr{0}{l,m}$ is obtained from $f_{l,m}(\vcpr{0}{0,0})$ by parallel transporting around the circular arc of $\gamma_{l,m}$.

We can therefore immediately describe the part of $\vcpr{0}{l,m}$ lying in $\Sigma'$, since it is obtained from the corresponding part of $\vcpr{0}{0,0}$ by applying $f_{l,m}$.  In full detail, it comprises: the radial line segment joining the deleted ball about $0$ to the deleted ball about $\zeta^l\eps^{1/(p-1)}$ in the $\xt$-axis; the real line segment joining the deleted ball about $0$ to the deleted ball about $\eta^m\eps^{1/(q-1)}$ in the $\yt$-axis; the lift to $\zeta^l\R_+ \times \eta^m\R_+ \subset \C^2$ of the line segment joining the deleted balls about $(\eps, 0)$ and $(0, \eps)$ in $\{u+v=\eps\}$, under the covering map $(\xt, \yt) \mapsto (u, v)$.  This is shown in \cref{figPrelimCycles}, where $\vcpr{0}{2,2}$ is drawn in solid black and $\vcpr{0}{0,1}$ is drawn dashed (the segment along which it overlaps with $\vcpr{0}{0,0}$ is shown dash-dotted).  The segments lying in the two coordiate axes should all really be straight, with the grey blobs lying on a circle about the origin, but we have deformed the picture in order to draw it in two dimensions.

To see what $\vcpr{0}{l,m}$ looks like in the three neck regions it meets, namely those corresponding to $\vc{\yt\fact}{l}$, $\vc{\xt\fact}{m}$ and $\vc{\xt\yt}{}$, we simply have to take the $(\zeta^l \R_+ \times \eta^m \R_+)$-locus in each of these necks over $-\zeta^l\eta^m\delta$ and parallel transport clockwise through angle $\theta_{l,m}$ around the circle of radius $\delta$; this is our next task.  Near the critical point $(0, 0)$, where $\xt$ and $\yt$ are both small, we may approximate $\wt_\eps$ by $-\eps \xt \yt$.  This corresponds to the $\vc{\xt\yt}{}$-neck region in $\Sigma$, and in this approximation the parallel transport equation \eqref{ParTransODE} simplifies to
\begin{equation}
\label{LocalParTrans}
\begin{pmatrix}\dot{\xt} \\ \dot{\yt}\end{pmatrix} = \frac{-\dot{c}}{\eps(|\xt|^2+|\yt|^2)} \begin{pmatrix}\overline{\yt} \\ \overline{\xt}\end{pmatrix}.
\end{equation}
We may also approximate the $(\zeta^l \R_+ \times \eta^m \R_+)$-locus in the $\vc{\xt\yt}{}$-neck over $-\zeta^l\eta^m\delta$ by the hyperbola
\[
(\xt, \yt)=\sqrt{\delta/\eps}(\zeta^le^s, \eta^me^{-s})
\]
parametrised by a small real variable $s$.  We want to parallel transport over the path $c(t) = -\delta e^{it}$ as $t$ decreases from $\theta_{l,m}$ to $0$, and we postulate a solution of the form $(\xt, \yt) = \sqrt{\delta/\eps}(e^{s+i\phi}, e^{-s+i(t-\phi)})$ where $\phi$ is a real function of $s$ and $t$.

Plugging this into \eqref{LocalParTrans} we obtain
\[
\begin{pmatrix}\dot{\phi}\xt \\ (1-\dot{\phi})\yt\end{pmatrix} = \frac{\xt\yt}{|\xt|^2+|\yt|^2} \begin{pmatrix}\overline{\yt} \\ \overline{\xt}\end{pmatrix},
\]
so after imposing the initial condition $\phi(s, \theta_{l,m}) = 2\pi l/(p-1)$ we get the unique solution
\begin{equation}
\label{phieqn}
\phi = \frac{2\pi l}{p-1} + \frac{e^{-2s}(t-\theta_{l,m})}{e^{2s}+e^{-2s}}.
\end{equation}
In particular, the value of $\phi$ at the end of the parallel transport ($t=0$), which we denote by $\phi_{l,m}$, is given by
\begin{equation}
\label{eqArgProfile}
\phi_{l,m}(s) \coloneqq \phi(s, 0) = \frac{2\pi}{e^{2s}+e^{-2s}} \left( \frac{e^{2s}l}{p-1} - \frac{e^{-2s}m}{q-1} \right).
\end{equation}
This is supposed to describe the argument of the $\xt$-component of $\vcpr{0}{l,m}$ (or minus the argument of the $\yt$-component) on the $\vc{\xt\yt}{}$-neck region of $\Sigma$, and note that it is consistent with the description we already have on $\Sigma'$: when $s$ becomes large this neck joins the $\xt$-axis, where we know that the $\xt$-component of $\vcpr{0}{l,m}$ has argument $2\pi l/(p-1)$; when $s$ becomes small the neck joins the $\yt$-axis, where we know that $\yt$-component of $\vcpr{0}{l,m}$ has argument $2\pi m/(q-1)$.

We can run analogous arguments on the other two necks that $\vcpr{0}{l,m}$ passes through.  To combine this information into a visualisable format, note that we can coordinatise the union of the $\xt$-axis part of $\Sigma'$ and the $\vc{\xt\yt}{}$- and $\vc{\yt\fact}{l}$-necks by $\xt$.  The $\xt$-projection of this region consists of the complex plane with a puncture at $0$, a puncture at $\zeta^l\xt^+_\mathrm{crit}$, and small balls about all other $\zeta^j\xt^+_\mathrm{crit}$ removed.  Small balls around the two punctures represent the two necks.  Strictly the punctures are extremely tiny deleted balls, but we will not make this distinction.

Away from the two neck regions in this picture, we are simply on the $\xt$-axis part of $\Sigma'$, so $\vcpr{0}{l,m}$ is given by the radial segment connecting them.  On the $\vc{\xt\yt}{}$-neck, near the puncture at $0$, the computation above shows that as we approach the pucture the argument of $\xt$ interpolates from $2\pi l /(p-1)$ to $-2\pi m/(q-1)$.  We can do the same on the $\vc{\yt\fact}{l}$ neck, near the puncture at $\zeta^l \xt^+_\mathrm{crit}$, but now the local coordinate is $\xt'$ where $\xt = \zeta^l\xt^+_\mathrm{crit} - \xt'$, and this time it is the argument of $\xt'$ which interpolates from $2\pi l/(p-1)$ to $-2\pi m/(q-1)$ as we approach the puncture.  The cases $(l,m)=(1, 0)$ and $(l,m)=(1,1)$ with $(p,q)=(4, 3)$ are shown in \cref{figlequalsL}.  We have drawn separate diagrams for the two choices of $(l,m)$ since the cycles overlap along their central segment and so would be difficult to distinguish if drawn on top of each other.
\begin{figure}[ht]
\centering
\begin{tikzpicture}[blob/.style={circle, draw=black, fill=black, inner sep=0, minimum size=\blobsize, line width=0.5mm}, critpt/.style={circle, draw=black, inner sep=0, minimum size=\ballsize}, noblob/.style={inner sep=0, minimum size=0}]
\def\ballsize{1.4cm}
\def\blobsize{0.03cm}
\def\r{2.5}

\draw (0,0) node[critpt, dotted]{};
\draw[opacity=0.2] (0,0) circle (\ballsize*0.25);
\draw (0,0) node[blob]{};
\foreach \a in {0, 4}
{
\draw ($cos(360*\a/6)*(\r, 0)+sin(360*\a/6)*(0, \r)$) node[critpt, dashed]{};
}
\foreach \a in {2}
{
\draw ($cos(360*\a/6)*(\r, 0)+sin(360*\a/6)*(0, \r)$) node(pt0)[critpt, dotted]{};
\draw[opacity=0.2] (pt0) circle (\ballsize*0.25);
\draw ($cos(360*\a/6)*(\r, 0)+sin(360*\a/6)*(0, \r)$) node(pt0)[blob]{};
}

\def\phi{(120*exp(2*\x)-180*exp(-2*\x))/(exp(2*\x)+exp(-2*\x))};
\draw[domain=-2.1:1.5, samples=100, smooth] plot ({0.2*exp(\x)*cos(\phi)}, {0.2*exp(\x)*sin(\phi)}) node(pt1)[noblob]{};
\begin{scope}[rotate=180, shift={(pt0)}]
\draw[domain=-2.1:1.5, samples=100, smooth] plot ({0.2*exp(\x)*cos(\phi)}, {0.2*exp(\x)*sin(\phi)}) node(pt2)[noblob]{};
\end{scope}
\draw[shorten <= -0.01cm, shorten >= -0.01cm] (pt1) -- (pt2);

\begin{scope}[xshift=-8cm]

\draw (0,0) node[critpt, dotted]{};
\draw[opacity=0.2] (0,0) circle (\ballsize*0.25);
\draw (0,0) node[blob]{};
\foreach \a in {0, 4}
{
\draw ($cos(360*\a/6)*(\r, 0)+sin(360*\a/6)*(0, \r)$) node[critpt, dashed]{};
}
\foreach \a in {2}
{
\draw ($cos(360*\a/6)*(\r, 0)+sin(360*\a/6)*(0, \r)$) node(pt0)[critpt, dotted]{};
\draw[opacity=0.2] (pt0) circle (\ballsize*0.25);
\draw ($cos(360*\a/6)*(\r, 0)+sin(360*\a/6)*(0, \r)$) node(pt0)[blob]{};
}

\def\phi{(120*exp(2*\x)-0*exp(-2*\x))/(exp(2*\x)+exp(-2*\x))};
\draw[domain=-2.1:1.5, samples=100, smooth] plot ({0.2*exp(\x)*cos(\phi)}, {0.2*exp(\x)*sin(\phi)}) node(pt1)[noblob]{};
\begin{scope}[rotate=180, shift={(pt0)}]
\draw[domain=-2.1:1.5, samples=100, smooth] plot ({0.2*exp(\x)*cos(\phi)}, {0.2*exp(\x)*sin(\phi)}) node(pt2)[noblob]{};
\end{scope}
\draw[shorten <= -0.01cm, shorten >= -0.01cm] (pt1) -- (pt2);

\end{scope}

\end{tikzpicture}
\caption{The $\xt$-projection of the preliminary vanishing cycles $\vcpr{0}{1,0}$ (left) and $\vcpr{0}{1,1}$ (right) in the $\xt$-axis part of $\Sigma'$ and the $\vc{\xt\yt}{}$- and $\vc{\yt\fact}{l}$-necks, with $(p,q)=(4,3)$.\label{figlequalsL}}
\end{figure}
The dashed circles represent the boundaries of the deleted balls, the dotted circles represent the boundaries of the neck regions, and the blobs represent the punctures.  The feint solid circles are the waist curves $\vc{\xt\yt}{}$ and $\vc{\yt\fact}{l}$.

There is a corresponding picture for the $\yt$-projection of the $\yt$-axis part of $\Sigma'$ and the $\vc{\xt\yt}{}$- and $\vc{\xt\fact}{m}$- necks.  The picture on $\{\fact=\eps\}$ part of $\Sigma'$ is essentially uninteresting since the $\vcpr{0}{l,m}$ are pairwise disjoint there.  This is because on that part the different $\vcpr{0}{l,m}$ are different lifts of the same segment in $\{u+v=\eps\}$.  Combining the pictures on these three parts of $\Sigma$ gives a complete description of all of the preliminary vanishing cycles.

\begin{rmk}
\label{rmkEasyIntersections}
There are some obvious points to note here, which are clear parallels of the structure of the generating set on the B-side.  First, the vanishing cycles $\vc{\xt\yt}{}$, $\vc{\yt\fact}{l}$ and $\vc{\xt\fact}{m}$ are all pairwise disjoint.  Second, each $\vcpr{0}{l,m}$ intersects $\vc{\xt\yt}{}$ exactly once, tranvsersely.  Third, $\vcpr{0}{l,m}$ and $\vc{\yt\fact}{L}$ intersect once, transversely, if $l=L$ and are disjoint otherwise (similarly for $\vc{\xt\fact}{M}$).  And finally, if $l\neq L$ and $m \neq M$ then $\vcpr{0}{l,m}$ and $\vcpr{0}{L,M}$ are disjoint except on the $\vc{\xt\yt}{}$-neck region, where \eqref{eqArgProfile} tells us that they intersect once, transversely, if $l > L$ and $m > M$ or vice versa, and are disjoint otherwise (as $|\xt|$ increases, the difference in their $\xt$-arguments varies monotonically from $2\pi(m-M)/(q-1)$ to $2\pi(L-l)/(p-1)$).
\end{rmk}

\subsection{Modifying the vanishing paths}
\label{ModifyingPaths}

As already noted, the preliminary vanishing paths (plus the vanishing paths connecting $-\delta$ to zero) do not form a distinguished basis of vanishing paths because they intersect and overlap each other.  In this subsection we describe how to remedy this, which also involves perturbing $\wt$ to separate the critical values, in such a way that the vanishing cycles are basically unaffected.

By plotting modulus and argument$+\pi$ we may view the preliminary paths $\gamma_{l,m}$ as right-angled paths in $\R^2$ from $(\delta, 0)$ to $(\delta, \theta_{l,m})$ to $(-c_\mathrm{crit}, \theta_{l,m})$.  We define modified paths $\gamma'_{l,m}$ using this picture to be the piecewise linear paths as follows:
\begin{itemize}
\item From $(\delta, 0)$ to $(\delta+\delta', \theta_{l,m})$ to $(-c_\mathrm{crit}, \theta_{l,m})$ for some small positive $\delta'$, if $\theta_{l,m} < 2\pi$.
\item From $(\delta, 0)$ to $(\delta+\delta', 2\pi+\lambda(\theta_{l,m}-4\pi))$ to $(\delta+2\delta', 2\pi+\lambda(\theta_{l,m}-4\pi))$ to $(\delta+3\delta', \theta_{l,m}-\theta'))$ to $(-c_\mathrm{crit}, \theta_{l,m}-\theta')$ for some small positive $\lambda$ and $\theta'$, if $\theta_{l,m} \geq 2\pi$.
\end{itemize}
In the second case we have moved the end-point of the path so we correspondingly perturb the fibration so that the critical point $(\zeta^l\xt^+_\mathrm{crit}, \eta^m\yt^+_\mathrm{crit})$ has its critical value $\zeta^l\eta^mc_\mathrm{crit}$ rotated by $e^{-i\theta'}$.  The paths are illustrated in the case $(p,q)=(4,6)$ in \cref{figGammap}.
\begin{figure}[ht]
\centering
\begin{tikzpicture}
\def\ccrit{8}
\def\deltap{0.7}
\def\yscale{5}
\def\shifta{0.2}
\def\shiftb{0.1}

\draw[opacity=0.2] (0, 0) -- ($4/5*(0, \yscale)+2/3*(0, \yscale)$);
\foreach \l in {0, ..., 4}
{
\foreach \m in {0, ..., 2}
{
\draw[opacity=0.2] ($\l/5*(0, \yscale)+\m/3*(0, \yscale)$) --+ (\ccrit, 0);
}
}

\draw[dashed] (-1, \yscale) -- (\ccrit+1, \yscale);

\foreach \a in {0, 3, 5, 6, 8, 9, 10, 11, 12, 13, 14}
{
\draw (0, 0) -- ($\a/15*(0, \yscale)+(\deltap, 0)$) -- ($\a/15*(0, \yscale)+(\ccrit, 0)$);
}

\foreach \a in {16, 17, 19, 22}
{
\draw (0, 0) -- ($(0, \yscale)+\a/15*(0, \shifta)-2*(0, \shifta)+(\deltap, 0)$) -- ($(0, \yscale)+\a/15*(0, \shifta)-2*(0, \shifta)+2*(\deltap, 0)$) -- ($\a/15*(0, \yscale)-(0, \shiftb)+3*(\deltap, 0)$) -- ($\a/15*(0, \yscale)-(0, \shiftb)+(\ccrit, 0)$);
}

\end{tikzpicture}
\caption{The paths $\gamma'_{l,m}$ in modulus-(argument$+\pi$) space, when $(p,q)=(4,6)$.\label{figGammap}}
\end{figure}
The feint lines are the preliminary paths $\gamma_{l,m}$ and the dashed line is at height $2\pi$.

This construction has the following key properties:
\begin{itemize}
\item The clockwise ordering of the tangent directions $\dot{\gamma}'_{l,m}(0)$ is by decreasing value of $\theta_{l,m}$.
\item If $\theta_{l,m} = \theta_{L,M}$ then $\gamma'_{l,m} = \gamma'_{L,M}$.
\item If $\theta_{l,m} \neq \theta_{L,M}$ then $\gamma'_{l,m}$ and $\gamma'_{L,M}$ are disjoint unless $\theta_{l,m} > \theta_{L,M}+2\pi$ (or vice versa), in which case they intersect once, transversely, close to $-\zeta^L\eta^M\delta$ (respectively $-\zeta^l\eta^m\delta$).
\end{itemize}
The control on the position of the intersection point in the third property is the reason for the curious kink in the paths $\gamma'_{l,m}$ for $\theta_{l,m} \geq 2\pi$.  If we had instead taken these paths to be $(\delta, 0)$ to $(\delta+\delta', \theta_{l,m}-\theta')$ to $(-c_\mathrm{crit}, \theta_{l,m}-\theta')$ then the intersection between $\gamma'_{l,m}$ and $\gamma'_{L,M}$ when $\theta_{l,m} > \theta_{L,M}+2\pi$ woud have occurred on the sloping regions of both paths, and therefore been awkward to locate.

Our next task is to explain how to modify those $\gamma'_{l,m}$ for which $\theta_{l,m} > 2\pi$ in order to remove the transverse intersections just described.  The key observation is:

\begin{lem}
\label{lemCyclesDisjoint}
Suppose $\theta_{l,m} > \theta_{L,M} + 2\pi$, and let $z$ denote the intersection point of $\gamma'_{l,m}$ and $\gamma'_{L,M}$.  Inside the fibre $\Sigma_z = \wt_\eps^{-1}(z)$ there are vanishing cycles corresponding to the critical points $(\zeta^l\xt^+_\mathrm{crit}, \eta^m\yt^+_\mathrm{crit})$ and $(\zeta^L\xt^+_\mathrm{crit}, \eta^M\yt^+_\mathrm{crit})$ and the truncations of the vanishing paths $\gamma'_{l,m}$ and $\gamma'_{L,M}$.  Denoting these by $V_1$ and $V_2$ respectively, we have
\[
V_1 \cap V_2 = \emptyset.
\]
\end{lem}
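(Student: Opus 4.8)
The plan is to exploit the circular symmetry to move everything into one fibre, reduce to a disjointness statement between two of the preliminary vanishing cycles, and settle that via the argument profile \eqref{eqArgProfile} on the $\vc{\xt\yt}{}$-neck.

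First record what the hypothesis gives. Writing $\theta_{l,m}-\theta_{L,M}=2\pi\bigl(\tfrac{l-L}{p-1}+\tfrac{m-M}{q-1}\bigr)$ and noting that each fraction has absolute value $<1$, the inequality $\theta_{l,m}>\theta_{L,M}+2\pi$ forces $l>L$ and $m>M$; thus $(l-L,m-M)$ is again an admissible pair, $\theta_{L,M}<2\pi$ (so $\gamma'_{L,M}$ falls into the first case of the definition and $\gamma'_{l,m}$ into the second), and $2\pi<\theta_{l-L,m-M}<4\pi$. Since $z$ lies close to $-\zeta^L\eta^M\delta$, it sits on the final radial segment of $\gamma'_{L,M}$ and on the last sloping segment of $\gamma'_{l,m}$, and tracking the turning along each path from $*$ to $z$ shows that $\gamma'_{L,M}|_{[*,z]}$ turns through $\theta_{L,M}$ whereas $\gamma'_{l,m}|_{[*,z]}$ turns through $\theta_{L,M}+2\pi$ --- the surplus full loop being precisely the point of the kink introduced in the modified paths. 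Now apply $f_{L,M}^{-1}$, which identifies $\Sigma_z$ with the fibre over $\zeta^{-L}\eta^{-M}z\approx*$: it carries $V_2$ to the vanishing cycle of the radial path to $(\xt^+_{\mathrm{crit}},\yt^+_{\mathrm{crit}})$, i.e.\ to $\vcpr{0}{0,0}$, and carries $V_1$ to the vanishing cycle over $*$ of a path to $(\zeta^{l-L}\xt^+_{\mathrm{crit}},\eta^{m-M}\yt^+_{\mathrm{crit}})$ turning through $\theta_{l-L,m-M}-2\pi$. By the trivialisation of $\wt_\eps$ over the disc of radius $\delta$, this last cycle differs from $\vcpr{0}{l-L,m-M}$ (whose path turns through $\theta_{l-L,m-M}$) exactly by the monodromy $\mu$ around the circle of radius $\delta$, namely the product of the positive Dehn twists in $\vc{\xt\yt}{}$ and in all the $\vc{\yt\fact}{l}$ and $\vc{\xt\fact}{m}$. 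Hence
\[
V_1\cap V_2=\emptyset\quad\Longleftrightarrow\quad\mu(\vcpr{0}{l-L,m-M})\cap\vcpr{0}{0,0}=\emptyset .
\]

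To evaluate the right-hand side, recall from \cref{rmkEasyIntersections} that $\vcpr{0}{l-L,m-M}$ meets only the pairwise disjoint curves $\vc{\xt\yt}{}$, $\vc{\yt\fact}{l-L}$, $\vc{\xt\fact}{m-M}$, so $\mu(\vcpr{0}{l-L,m-M})=\tau_{\vc{\xt\yt}{}}\,\tau_{\vc{\yt\fact}{l-L}}\,\tau_{\vc{\xt\fact}{m-M}}(\vcpr{0}{l-L,m-M})$. Since $l-L\geq1$ and $m-M\geq1$, the cycle $\vcpr{0}{0,0}$ enters only the necks $\vc{\yt\fact}{0}$, $\vc{\xt\fact}{0}$, $\vc{\xt\yt}{}$, so the twists along $\vc{\yt\fact}{l-L}$ and $\vc{\xt\fact}{m-M}$ are irrelevant to the intersection and it is enough to show $\tau_{\vc{\xt\yt}{}}(\vcpr{0}{l-L,m-M})\cap\vcpr{0}{0,0}=\emptyset$ for a positive Dehn twist supported in the $\vc{\xt\yt}{}$-neck. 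Outside that neck the two cycles are already disjoint --- on $\Sigma'$ they are the standard radial segments at distinct angles, and in the remaining necks they sit in distinct necks --- so the problem localises to the $\vc{\xt\yt}{}$-neck. There $\vcpr{0}{0,0}$ has constant $\xt$-argument $0$ (set $l=m=0$ in \eqref{eqArgProfile}), while along $\vcpr{0}{l-L,m-M}$ the $\xt$-argument increases monotonically from $-\tfrac{2\pi(m-M)}{q-1}$ to $\tfrac{2\pi(l-L)}{p-1}$ as one passes from the $\yt$-axis side to the $\xt$-axis side. A positive twist in the waist strips off one full turn, so after a harmless Hamiltonian perturbation into monotone normal form the $\xt$-argument along $\tau_{\vc{\xt\yt}{}}(\vcpr{0}{l-L,m-M})$ runs monotonically from $2\pi\bigl(1-\tfrac{m-M}{q-1}\bigr)$ to $\tfrac{2\pi(l-L)}{p-1}$. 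Both endpoints lie strictly between $0$ and $2\pi$ because $1\leq l-L\leq p-2$ and $1\leq m-M\leq q-2$; the function is increasing precisely because $\tfrac{l-L}{p-1}+\tfrac{m-M}{q-1}>1$; and its total variation $2\pi\bigl(\tfrac{l-L}{p-1}+\tfrac{m-M}{q-1}-1\bigr)$ is again $<2\pi$. Hence the $\xt$-argument stays in $(0,2\pi)$, in particular is never $0\bmod2\pi$, so the two cycles are disjoint on the neck, and $V_1\cap V_2=\emptyset$.

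The step needing the most care is the orientation bookkeeping: one must check that the surplus loop in $\gamma'_{l,m}|_{[*,z]}$ contributes $\mu$ rather than $\mu^{-1}$, and that the positive twist $\tau_{\vc{\xt\yt}{}}$ shifts the argument profile by $+2\pi$ and not $-2\pi$ --- the opposite choice would leave two intersection points rather than none. Both are pinned down by comparing directly with the local parallel transport equation \eqref{LocalParTrans} behind \eqref{eqArgProfile}; everything else is bookkeeping with turning numbers and the index ranges $0\leq L<l\leq p-2$, $0\leq M<m\leq q-2$.
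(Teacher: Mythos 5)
Your overall strategy is the paper's: force $l>L$, $m>M$, use $f_{L,M}^{-1}$ to reduce to $(L,M)=(0,0)$, observe the two cycles can only meet in the $\vc{\xt\yt}{}$-neck, and show the $\xt$-argument of the transported cycle stays strictly between $0$ and $2\pi$ there (your endpoints $2\pi\bigl(1-\tfrac{m-M}{q-1}\bigr)$ and $\tfrac{2\pi(l-L)}{p-1}$ and the monotonicity criterion are exactly what the paper obtains). The difference is how you get that profile, and this is where there is a genuine gap. The lemma asserts literal set-theoretic disjointness $V_1\cap V_2=\emptyset$ (this is what lets the finger moves act by Dehn twists that genuinely fix $V_1$), but your argument only controls $V_1$ up to Hamiltonian isotopy. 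Concretely: the identification ``$V_1=\mu(\vcpr{0}{l-L,m-M})$ with $\mu$ the product of model Dehn twists supported near the waist curves'' is only an equality up to isotopy --- the actual clockwise transport from angle $2\pi$ to $0$ is a specific symplectomorphism (a ``partial twist'' spread over the whole neck), not the standard annulus-supported model. If you do use the annulus-supported model, the twisted curve agrees with $\vcpr{0}{l-L,m-M}$ outside a thin annulus, so it still crosses the argument-$0$ locus at the original crossing point and picks up a second, cancelling crossing inside the annulus: it is \emph{not} literally disjoint from $\vcpr{0}{0,0}$, only disjoinable. Your final phrase ``after a harmless Hamiltonian perturbation into monotone normal form'' concedes exactly this: it is not a small perturbation, so it shows the geometric intersection number is zero, not that the intersection is empty. (Note the contrast with the approximations the paper does allow itself: those are arbitrarily small, and compactness --- disjoint compact sets are at positive distance --- upgrades approximate disjointness to genuine disjointness; a normal-form isotopy removing a pair of intersection points cannot be made small in that sense.)

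The repair is short and is precisely what the paper does instead of the monodromy-as-model-twists step: compute the transported curve exactly by solving the local parallel transport equation \eqref{LocalParTrans} over the arc from $\theta_{l,m}$ to $2\pi$, i.e.\ set $t=2\pi$ in \eqref{phieqn}. The resulting profile is already the monotone function you wrote down, with no perturbation needed, and then the endpoint bounds you state give $\phi\notin 2\pi\Z$, hence genuine disjointness on the neck. Alternatively you could weaken the statement to ``$V_1$ and $V_2$ can be made disjoint by Hamiltonian isotopy'' and re-justify the finger construction with that, but as a proof of the stated lemma the step from the $t=0$ profile \eqref{eqArgProfile} plus ``a positive twist strips off one full turn'' to the displayed monotone profile does not go through without the exact transport computation.
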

\begin{proof}
First note that if $l \leq L$ then $\theta_{l,m}-\theta_{L,M}$ is at most $2\pi (q-2)/(q-1)$, so we must have $l > L$ and similarly $m > M$.  By applying $f_{L,M}^{-1}$ we may then assume without loss of generality that $L=M=0$ and $l, m > 0$.  The former means that $z$ is approximately $-\delta$, and that $V_2 \subset \Sigma_z$ is approximately $\vcpr{0}{0,0} \subset \Sigma$.  The curve $V_1$, meanwhile, is constructed in approximately the same way as $\vcpr{0}{l,m}$ but with the parallel transport around the circle of radius $\delta$ done from $\theta_{l,m}$ to $2\pi$, rather than to $0$.  For the rest of the argument we take these approximations to be exact.  Since the cycles $V_1$ and $V_2$ are compact, once we show that they are disjoint after our small approximation we automatically deduce that they were disjoint before (compact and disjoint implies separated by a positive distance).

Since $l$ and $m$ are both positive we see that $V_1$ and $V_2 = \vcpr{0}{0,0}$ are disjoint on $\Sigma' \subset \Sigma$, and that the only neck region that they both pass through is that corresponding to $\vc{\xt\yt}{}$.  This means that the only possible intersections occur in this neck, which we can coordinatise by projection to $\xt$.  In this projection we know that $\vcpr{0}{0,0}$ and $V_1$ are parametrised by
\[
\xt=\sqrt{\delta/\eps}e^s \text{\quad and \quad} \xt=\sqrt{\delta/\eps}e^{s+i\phi},
\]
respectively, where $\phi$ is given by setting $t=2\pi$ in \eqref{phieqn}.  It therefore suffices to show that this function $\phi$ never hits $2\pi \Z$.  To prove this, simply note that the function is monotonically increasing from $2\pi - 2\pi m/(q-1)$, which is strictly positive, to $2\pi l/(p-1)$, which is strictly less than $2\pi$.
\end{proof}

Now let $\gamma''_{l,m}$ denote the path obtained from $\gamma'_{l,m}$ by introducing a long thin finger which loops around the radial segment of $\gamma'_{L,M}$, for each $(L, M)$ with $\theta_{l,m} > \theta_{L,M} + 2\pi$.  \Cref{figFingers} illustrates $\gamma''_{2,4}$ in the case $(p,q)=(4,6)$.  The feint lines show the paths $\gamma'_{L,M}$ which we have had to loop around.
\begin{figure}[ht]
\centering
\begin{tikzpicture}[blob/.style={circle, draw=black, fill=black, inner sep=0, minimum size=\blobsize, line width=0.5mm}, critval/.style={cross out, draw=black, fill=black, inner sep=0, minimum size=\crosssize, line width=0.5mm}, noblob/.style={inner sep=0, minimum size=0}]
\def\blobsize{1mm}
\def\crosssize{1mm}
\def\masterDELTA{0.7}
\def\DELTA{\masterDELTA}
\def\r{3}
\def\fingerwidth{7.5}

\begin{scope}[opacity=0.2]
\draw (-\DELTA, 0) node[blob]{};
\draw (0, 0) node[critval]{};

\foreach \a in {1, ..., 15}
{
\draw ($-cos(360*\a/15)*(\r, 0)+sin(360*\a/15)*(0, \r)$) node[critval]{};
}

\draw (-\r, 0) -- (-\DELTA, 0);

\def\l{1}
\def\m{0}
\def\theta{360*(\l/3+\m/5)}
\def\DELTA{\masterDELTA*1.03}
\draw ($-cos(\theta)*(\r, 0)-sin(\theta)*(0, \r)$) -- ($-cos(\theta)*(\DELTA, 0)-sin(\theta)*(0, \DELTA)$);
\draw ($-cos(\theta)*(\DELTA, 0)-sin(\theta)*(0, \DELTA)$) arc(\theta+180:180:\DELTA);

\def\l{0}
\def\m{1}
\def\theta{360*(\l/3+\m/5)}
\def\DELTA{\masterDELTA*1.06}
\draw ($-cos(\theta)*(\r, 0)-sin(\theta)*(0, \r)$) -- ($-cos(\theta)*(\DELTA, 0)-sin(\theta)*(0, \DELTA)$);
\draw ($-cos(\theta)*(\DELTA, 0)-sin(\theta)*(0, \DELTA)$) arc(\theta+180:180:\DELTA);

\def\l{0}
\def\m{2}
\def\theta{360*(\l/3+\m/5)}
\def\DELTA{\masterDELTA*1}
\draw ($-cos(\theta)*(\r, 0)-sin(\theta)*(0, \r)$) -- ($-cos(\theta)*(\DELTA, 0)-sin(\theta)*(0, \DELTA)$);
\draw ($-cos(\theta)*(\DELTA, 0)-sin(\theta)*(0, \DELTA)$) arc(\theta+180:180:\DELTA);
\end{scope}

\def\DELTA{\masterDELTA*1.12}
\draw ($-cos(168)*(\r, 0)-sin(168)*(0, \r)$) -- ($-cos(168)*(\DELTA, 0)-sin(168)*(0, \DELTA)$);

\draw ($-cos(168)*(\DELTA, 0)-sin(168)*(0, \DELTA)$) arc(168+180:144+180+\fingerwidth:\DELTA);
\draw ($-cos(144-\fingerwidth)*(\DELTA, 0)-sin(144-\fingerwidth)*(0, \DELTA)$) arc(144+180-\fingerwidth:120+180+\fingerwidth:\DELTA);
\draw ($-cos(120-\fingerwidth)*(\DELTA, 0)-sin(120-\fingerwidth)*(0, \DELTA)$) arc(120+180-\fingerwidth:72+180+\fingerwidth:\DELTA);
\draw ($-cos(72-\fingerwidth)*(\DELTA, 0)-sin(72-\fingerwidth)*(0, \DELTA)$) arc(72+180-\fingerwidth:180+\fingerwidth:\DELTA);

\foreach \theta in {144, 120, 72, 0}
{
\draw ($-cos(\theta+\fingerwidth)*(\DELTA, 0)-sin(\theta+\fingerwidth)*(0, \DELTA)$) to ($-cos(\theta+\fingerwidth/4)*(\r+0.1, 0)-sin(\theta+\fingerwidth/4)*(0, \r+0.1)$) to [bend left=90] ($-cos(\theta-\fingerwidth/4)*(\r+0.1, 0)-sin(\theta-\fingerwidth/4)*(0, \r+0.1)$) to ($-cos(\theta-\fingerwidth)*(\DELTA, 0)-sin(\theta-\fingerwidth)*(0, \DELTA)$);
}

\draw ($-cos(-\fingerwidth)*(\DELTA, 0)-sin(-\fingerwidth)*(0, \DELTA)$) -- ($-cos(-\fingerwidth*2)*0.84*(\DELTA, 0)-sin(-\fingerwidth*2)*0.84*(0, \DELTA)$) arc(180-\fingerwidth*2:-180:\DELTA*0.84);

\end{tikzpicture}
\caption{The path $\gamma''_{2,4}$ when $(p,q)=(4,6)$.\label{figFingers}}
\end{figure}
In principle, each time we go around one of the fingers the `intermediate vanishing cycle' $V_1$ is changed by the monodromy around $\zeta^L\eta^Mc_\mathrm{crit}$, which is precisely the Dehn twist in $V_2$ (or, more accurately, the product of the Dehn twists in all cycles constructed in the same way as $V_2$ as $(L, M)$ ranges over all pairs with the same value of $\theta_{L,M}$), and by \cref{lemCyclesDisjoint} this has no effect.  We conclude that the vanishing cycles for the new paths $\gamma''_{l,m}$ coincide with those of the previous paths $\gamma'_{l,m}$, which in turn are small perturbations of those of the preliminary paths $\gamma_{l,m}$.  Note also that we can construct the new paths so as not to introduce any new intersections between them (for example, we can make sure the fingers for $\gamma''_{2, 3}$ go \emph{outside} the fingers for $\gamma''_{2,4}$ shown in \cref{figFingers}).

The upshot is that we now have vanishing paths $\gamma''_{l,m}$, plus the vanishing paths connecting $-\delta$ to $0$, which form a distinguished basis except for the fact that some of the paths coincide with each other.  This is straightforwardly fixed by making a small perturbation of the fibration to separate the critical values, and corresponding small perturbations of the paths.  The precise way in which this is done will affect the ordering of the paths, and hence the ordering of the vanishing cycles in $\mathcal{A}$, but this is irrelevant since the ambiguity is always between cycles which are disjoint and therefore orthogonal in the category.

We conclude:
\begin{prop}
\label{propAfromPrelim}
There exists a Morsification of $\wt$ and a distinguished basis of vanishing paths such that the corresponding vanishing cycles are arbitrarily small pertubations of the $\vcpr{0}{l,m}$, $\vc{\yt\fact}{l}$, $\vc{\xt\fact}{m}$ and $\vc{\xt\yt}{}$ as constructed above.  The $\vcpr{0}{l,m}$ are ordered by decreasing value of $\theta_{l,m}$, and by choosing the starting direction for our clockwise ordering to be $e^{i\theta}$, for $\theta$ a small positive angle, they occur before all of the other vanishing cycles.\hfill$\qed$
\end{prop}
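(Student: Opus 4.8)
The proposition collects together the constructions of this subsection, so the plan is to assemble the pieces and check at each step that the modification alters the vanishing cycles only by an arbitrarily small perturbation. Throughout I work with the resonant Morsification $\wt_\eps = \wt - \eps\xt\yt$, taking the straight segments from $-\delta$ to $0$ as vanishing paths for the critical points of types (\ref{crit1})--(\ref{crit3}) and the preliminary paths $\gamma_{l,m}$ for type (\ref{crit4}). These do not form a distinguished basis because the $\gamma_{l,m}$ overlap and cross, and because distinct pairs $(l,m)$ may share a critical value; I remove these defects in three stages. \emph{Stage 1:} replace $\gamma_{l,m}$ by the piecewise-linear paths $\gamma'_{l,m}$ defined above in modulus-(argument$+\pi$) coordinates, and record their three properties — the initial tangent directions are clockwise-ordered by decreasing $\theta_{l,m}$; $\gamma'_{l,m} = \gamma'_{L,M}$ when $\theta_{l,m} = \theta_{L,M}$; and if $\theta_{l,m} \neq \theta_{L,M}$ the paths are disjoint unless $\theta_{l,m} > \theta_{L,M} + 2\pi$ (or vice versa), in which case they meet exactly once, transversely, at a point close to $-\zeta^L\eta^M\delta$ — this last control being precisely what the kink in $\gamma'_{l,m}$ for $\theta_{l,m} \geq 2\pi$ is engineered to force. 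For $\theta_{l,m} < 2\pi$ the passage from $\gamma_{l,m}$ to $\gamma'_{l,m}$ is an isotopy rel endpoints through paths avoiding the critical values, so the vanishing cycle is literally unchanged; for $\theta_{l,m} \geq 2\pi$ it additionally moves the endpoint, which I absorb by rotating the fibration so that the critical value $\zeta^l\eta^m c_\mathrm{crit}$ is carried along, again changing the vanishing cycle only by a small isotopy.

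\emph{Stage 2:} replace $\gamma'_{l,m}$ by $\gamma''_{l,m}$, obtained by adjoining, for each $(L,M)$ with $\theta_{l,m} > \theta_{L,M} + 2\pi$, a long thin finger encircling the radial segment of $\gamma'_{L,M}$; these fingers can be nested so as to introduce no new intersections among the paths. Traversing one finger alters the intermediate vanishing cycle by the monodromy around $\zeta^L\eta^M c_\mathrm{crit}$, namely by the product of the Dehn twists in the cycles constructed exactly as $V_2$ in \cref{lemCyclesDisjoint}, one for each pair with that common value of $\theta_{L,M}$. By \cref{lemCyclesDisjoint} each such cycle is disjoint from the intermediate cycle $V_1$, so these Dehn twists may be taken supported away from $V_1$ and act trivially; hence the vanishing cycles of the $\gamma''_{l,m}$ coincide with those of the $\gamma'_{l,m}$, which are themselves small perturbations of the $\vcpr{0}{l,m}$.

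\emph{Stage 3:} the $\gamma''_{l,m}$ together with the straight segments to $0$ are now pairwise non-crossing and meet only at $-\delta$, so they form a distinguished basis apart from the fact that some coincide. A generic small perturbation of the fibration separates the coincident critical values and the paths are perturbed accordingly; the only ambiguity this creates is in the ordering of cycles with equal $\theta_{l,m}$, and such cycles are disjoint, hence orthogonal in $\mathcal{A}$, so the choice is immaterial. Choosing the starting direction for the clockwise ordering to be $e^{i\theta}$ with $\theta$ a small positive real then places all the (perturbed) $\vcpr{0}{l,m}$ first, ordered by decreasing $\theta_{l,m}$, followed by $\vc{\yt\fact}{l}$, $\vc{\xt\fact}{m}$ and $\vc{\xt\yt}{}$; taking $\delta'$, the finger parameters, and the final perturbation small enough makes all the resulting cycles arbitrarily close to their unperturbed models, which is the assertion. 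The one genuinely substantive ingredient is \cref{lemCyclesDisjoint}, whose proof turns on the explicit parallel-transport solution \eqref{phieqn} on the $\vc{\xt\yt}{}$-neck and the observation that the relevant argument function never meets $2\pi\Z$; granted that, everything above is bookkeeping about paths in the base.
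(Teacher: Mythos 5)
Your proposal is correct and follows essentially the same route as the paper, which proves this proposition simply by the construction in \cref{ModifyingPaths}: the passage $\gamma_{l,m} \rightsquigarrow \gamma'_{l,m}$ with its three listed properties, the fingers $\gamma''_{l,m}$ whose monodromies act trivially by \cref{lemCyclesDisjoint}, and the final small perturbation separating coincident critical values, with the ordering ambiguity harmless because the affected cycles are disjoint. Your three-stage write-up is a faithful assembly of exactly these steps, with \cref{lemCyclesDisjoint} correctly identified as the only substantive ingredient.
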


\subsection{Isotoping the vanishing cycles and computing the morphisms}
\label{Isotopies}

Let us refer to the small perturbations of the preliminary vanishing cycles $\vc{0}{l,m}$ that appear in \cref{propAfromPrelim} as \emph{temporary} vanishing cycles.  In order to compute the category $\mathcal{A}$ we need to understand the intersection pattern of these temporary cycles.  Some pairs of these cycles were already transverse before perturbing, as described in \cref{rmkEasyIntersections}---in fact, all pairs except those of the form $\vcpr{0}{l,m}$, $\vcpr{0}{L,M}$ with $l=L$ or $m=M$---so their intersections are unaffected by the small perturbations.  For the non-transverse pairs of preliminary cycles, however, which actually overlap along segments, we cannot pin down the intersections of the corresponding temporary cycles without keeping more careful track of the perturbations, which is impractical.

In order to overcome this we shall modify these problematic temporary cycles, which are small perturbations of the $\vcpr{0}{l,m}$, by Hamilton isotopies to obtain \emph{final} vanishing cycles $\vc{0}{l,m}$ which we will use to compute $\mathcal{A}$.  This does not affect the quasi-equivalence type of the category.  These isotopies will be small in the absolute sense, and in particular will only affect intersections between pairs of cycles which were non-transverse before perturbing from preliminary to temporary, but will not be small compared with these perturbations.  Indeed, their very point is to undo any uncertainty in the intersection pattern which these perturbations introduced.

\begin{rmk}
Since each waist curve $\vc{\yt\fact}{l}$, $\vc{\xt\fact}{m}$, and $\vc{\xt\yt}{}$ was already transverse to all other cycles, the corresponding perturbed curve in \cref{propAfromPrelim} has the same intersection pattern.  We therefore do not notationally distinguish between the waist curves and their perturbations.
\end{rmk}

We only need describe the isotopies on the regions where the preliminary cycles were non-transverse.  This means that for each $\vcpr{0}{l,m}$ we may focus on neighbourhoods of its segments lying in the $\xt$-axis and $\yt$-axis regions of $\Sigma'$.  So fix an $(l,m)$ and consider the part of $\vcpr{0}{l,m}$ (strictly the temporary cycle obtained from this) lying in the $\xt$-axis part of $\Sigma'$ and the $\vc{\xt\yt}{}$- and $\vc{\yt\fact}{l}$-necks.  We view this in the $\xt$-projection, as in \cref{figlequalsL}.

We first isotope the $\xt$-axis segment, between the two necks, anticlockwise about $\xt=0$ by an amount proportional to $m$.  This of course requires corresponding small modifications at the boundaries of the neck regions to keep the curve continuous. To make the isotopy Hamiltonian, we then push the curve slightly clockwise just inside the $\vc{\xt\yt}{}$-neck.  The result is shown schematically in \cref{figHamPerts} for the $(l,m)=(1,0)$ and $(1,1)$ cycles with $(p,q)=(4,3)$.
\begin{figure}[ht]
\centering
\begin{tikzpicture}[blob/.style={circle, draw=black, fill=black, inner sep=0, minimum size=\blobsize, line width=0.5mm}, critpt/.style={circle, draw=black, inner sep=0, minimum size=\ballsize}, noblob/.style={inner sep=0, minimum size=0}]
\def\ballsize{1.4cm}
\def\blobsize{0.03cm}
\def\r{2.5}

\draw (0,0) node[critpt, dotted]{};
\draw (0,0) node[blob]{};
\foreach \a in {0, 4}
{
\draw ($cos(360*\a/6)*(\r, 0)+sin(360*\a/6)*(0, \r)$) node[critpt, dashed]{};
}
\foreach \a in {2}
{
\draw ($cos(360*\a/6)*(\r, 0)+sin(360*\a/6)*(0, \r)$) node(pt0)[critpt, dotted]{};
\draw ($cos(360*\a/6)*(\r, 0)+sin(360*\a/6)*(0, \r)$) node(pt0)[blob]{};
}

\def\phi{(120*exp(2*\x)-0*exp(-2*\x))/(exp(2*\x)+exp(-2*\x))};
\draw[domain=-2.1:1.5, samples=100, smooth] plot ({0.2*exp(\x)*cos(\phi)}, {0.2*exp(\x)*sin(\phi)}) node(pt1)[noblob]{};
\begin{scope}[rotate=180, shift={(pt0)}]
\draw[domain=-2.1:1.5, samples=100, smooth] plot ({0.2*exp(\x)*cos(\phi)}, {0.2*exp(\x)*sin(\phi)}) node(pt2)[noblob]{};
\end{scope}
\draw[shorten <= -0.01cm, shorten >= -0.01cm] (pt1) -- (pt2);

\def\phi{(130*exp(2*\x)-180*exp(-2*\x))/(exp(2*\x)+exp(-2*\x))};
\draw[domain=-2.1:1.5, samples=100, smooth] plot ({0.2*exp(\x)*cos(\phi)}, {0.2*exp(\x)*sin(\phi)}) node(pt1)[noblob]{};
\begin{scope}[rotate=180, shift={(pt0)}]
\def\phi{(110*exp(2*\x)-180*exp(-2*\x))/(exp(2*\x)+exp(-2*\x))};
\draw[domain=-2.1:1.5, samples=100, smooth] plot ({0.2*exp(\x)*cos(\phi)}, {0.2*exp(\x)*sin(\phi)}) node(pt2)[noblob]{};
\end{scope}
\draw[shorten <= -0.01cm, shorten >= -0.01cm] (pt1) to [bend left=5] (pt2);

\end{tikzpicture}
\caption{The $\xt$-projection of the final vanishing cycles $\vc{0}{1,0}$ and $\vc{0}{1,1}$ in the $\xt$-axis part of $\Sigma'$ and the $\vc{\xt\yt}{}$- and $\vc{\yt\fact}{1}$-necks, with $(p,q)=(4,3)$.\label{figHamPerts}}
\end{figure}
We then do a similar thing on the $\yt$-axis part of $\Sigma'$ and the $\vc{\xt\yt}{}$- and $\vc{\yt\fact}{l}$-necks.

The result is that the final cycles $\vc{0}{l,m}$ are all pairwise disjoint, except on the $\vc{\xt\yt}{}$-neck.  Inside this neck, the intersections between $\vc{0}{l,m}$ and $\vc{0}{L,M}$ remain as described in \cref{rmkEasyIntersections} when $l\neq L$ and $m\neq M$.  When $l=L$ and (without loss of generality) $m>M$ the effect is as follows.  Before perturbing and isotoping, the $\xt$-arguments of the curves on the $\vc{\xt\yt}{}$-neck are described by \eqref{eqArgProfile} and illustrated in the left-hand part of \cref{figArgProfiles}.  In particular, the curves converge as $|\xt|$ becomes large.
\begin{figure}[ht]
\centering
\begin{tikzpicture}[yscale=2]
\def\pert{0.2}
\def\ppert{\pert*tanh(20*\mq*(\x-1.4))}
\def\mpert{\pert*tanh(-20*\lp*(\x+1.4))}

\def\lp{1/2}
\def\mqa{1/3}
\def\mqb{2/3}

\draw[->] (0, -1) -- (0, 1) node[anchor=east]{\tiny$\arg \xt$};
\draw (0, \lp) node[anchor=south east]{\tiny$2\pi l/(p-1)$};
\draw[opacity=0.2] (-1, \lp) -- (12, \lp);
\draw (0, -\mqa) node[anchor=south east]{\tiny$-2\pi M/(q-1)$};
\draw[opacity=0.2] (-1, -\mqa) -- (12, -\mqa);
\draw (0, -\mqb) node[anchor=south east]{\tiny$-2\pi m/(q-1)$};
\draw[opacity=0.2] (-1, -\mqb) -- (12, -\mqb);

\begin{scope}[xshift=3cm]
\draw[->] (-2.4, 0) -- (2.4, 0) node[anchor=north]{\tiny$\log |\xt|$};

\def\mq{\mqa}
\draw[domain=-2:2, samples=100, smooth] plot ({\x}, {(exp(2*\x)*\lp-exp(-2*\x)*\mq)/(exp(2*\x)+exp(-2*\x))});

\def\mq{\mqb}
\draw[domain=-2:2, samples=100, smooth] plot ({\x}, {(exp(2*\x)*\lp-exp(-2*\x)*\mq)/(exp(2*\x)+exp(-2*\x))});
\end{scope}

\begin{scope}[xshift=9cm]
\draw[->] (-2.4, 0) -- (2.4, 0) node[anchor=north]{\tiny$\log |\xt|$};

\def\mq{\mqa}
\draw[domain=-2:-1, samples=100, smooth] plot ({\x}, {(exp(2*\x)*(\lp-\mq*\pert)-exp(-2*\x)*(\mq+\lp*\mpert))/(exp(2*\x)+exp(-2*\x))});
\draw[domain=-1.01:1.01, samples=100, smooth] plot ({\x}, {(exp(2*\x)*(\lp-\mq*\pert)-exp(-2*\x)*(\mq-\lp*\pert))/(exp(2*\x)+exp(-2*\x))});
\draw[domain=1:2, samples=100, smooth] plot ({\x}, {(exp(2*\x)*(\lp+\mq*\ppert)-exp(-2*\x)*(\mq-\lp*\pert))/(exp(2*\x)+exp(-2*\x))});

\def\mq{\mqb}
\draw[domain=-2:-1, samples=100, smooth] plot ({\x}, {(exp(2*\x)*(\lp-\mq*\pert)-exp(-2*\x)*(\mq+\lp*\mpert))/(exp(2*\x)+exp(-2*\x))});
\draw[domain=-1.01:1.01, samples=100, smooth] plot ({\x}, {(exp(2*\x)*(\lp-\mq*\pert)-exp(-2*\x)*(\mq-\lp*\pert))/(exp(2*\x)+exp(-2*\x))});
\draw[domain=1:2, samples=100, smooth] plot ({\x}, {(exp(2*\x)*(\lp+\mq*\ppert)-exp(-2*\x)*(\mq-\lp*\pert))/(exp(2*\x)+exp(-2*\x))});
\end{scope}

\end{tikzpicture}
\caption{The $\vc{\xt\yt}{}$-neck regions of the curves $\vc{0}{l,m}$ and $\vc{0}{l,M}$ before (left) and after (right) perturbing and isotoping.\label{figArgProfiles}}
\end{figure}
The isotoped curves are shown schematically in the right-hand part of the same diagram, and we see that now they intersect once, transversely, where $\vc{0}{l,m}$ has been pushed further anticlockwise than $\vc{0}{l,M}$.

Combining this with \cref{rmkEasyIntersections} and \cref{propAfromPrelim} (with the $\vc{0}{l,m}$ now being used in place of the $\vcpr{0}{l,m}$) we obtain a model for $\mathcal{A}$ with precisely the following basis of morphisms:
\begin{itemize}
\item An identity morphism for each object.
\item A morphism from $\vc{0}{l,m}$ to $\vc{0}{L,M}$ whenever $(l,m) \neq (L,M)$ but both $l \geq L$ and $m \geq M$.
\item A morphism from each $\vc{0}{l,m}$ to each of $\vc{\xt\yt}{}$, $\vc{\yt\fact}{l}$ and $\vc{\xt\fact}{m}$.
\end{itemize}
This is a chain-level description, but for any pair of objects the morphism complexes are either one- or zero-dimensional, so all differentials trivially vanish.  Additively the cohomology algebra therefore matches exactly with the quiver description of $\mathcal{B}$ in \cref{figLoopQuiver}, under the identification
\begin{equation}
\label{ObjectMatch}
\begin{aligned}
\vc{0}{l,m} &\leftrightarrow \obj{0}{i,j}
\\ \vc{\yt\fact}{l} &\leftrightarrow \obj{x}{i}[3]
\\ \vc{\xt\fact}{m} &\leftrightarrow \obj{y}{j}[3]
\\ \vc{\xt\yt}{} &\leftrightarrow \obj{\fac}{}[3]
\end{aligned}
\quad \text{with} \quad
\begin{aligned}
i+l&=p-1
\\ j+m&=q-1.
\end{aligned}
\end{equation}
To complete the proof of \cref{Thm1} in the loop case, we just need to check that the compositions agree, and that the vanishing cycles can be graded so as to place all morphisms in degree $0$.  These are the subjects of the next two subsections.

\begin{rmk}
The identification \eqref{ObjectMatch} is between the objects of $\mathcal{A} \subset \mathcal{F}(\Sigma)$ and $\mathcal{B} \subset \mathrm{mf}(\C^2, \Gamma_\w, \w)$.  In the ultimate equivalence $\mathcal{F}(\wt) \simeq \mathrm{mf}(\C^2, \Gamma_\w, \w)$ the vanishing cycles in \eqref{ObjectMatch} should be replaced by their images under the equivalence $\Tw \mathcal{A} \rightarrow \mathcal{F}(\wt)$, which are the corresponding Lefschetz thimbles.
\end{rmk}

\subsection{Composition}

Suppose $L_0$, $L_1$ and $L_2$ are three (final) vanishing cycles such that $L_0 < L_1 < L_2$ with respect to the ordering on the category $\mathcal{A}$ (we are calling them $L$ rather than $V$ to avoid conflict with our earlier notation for specific cycles).  We need to compute the composition
\begin{equation}
\label{eqFloerProduct}
HF^*(L_1, L_2) \otimes HF^*(L_0, L_1) \rightarrow HF^*(L_0, L_2),
\end{equation}
which is defined by counting pseudo-holomorphic triangles, and Seidel \cite[Section (13b)]{SeidelBook} shows that this can be done combinatorially by simply counting triangular regions bounded by the $L_i$.  The crucial point is that one can do without Hamiltonian perturbations or perturbations of the complex structure because the directedness of the category automatically rules out contributions from constant discs.  (It also rules out discs in which the ordering of the Lagrangians around the boundary does not match their ordering in the category.)

In order for this composition to have a chance of being non-zero (i.e.~in order for all three $HF^*$ groups to be non-zero) we must have $L_0 = \vc{0}{l,m}$ and $L_1 = \vc{0}{L,M}$ for some distinct $(l,m)$ and $(L,M)$ with $l \geq L$ and $m \geq M$.  We then have four cases, depending on whether $L_2$ is $\vc{\xt\yt}{}$, $\vc{\yt\fact}{L}$, $\vc{\xt\fact}{M}$, or of the form $\vc{0}{r,s}$ for some $(r, s) \neq (L, M)$ with $r \leq L$ and $s \leq M$.  We restrict our attention to these four cases from now on.

In each case there is a single obvious holomorphic triangle contributing to the product.  In the first and fourth cases the triangle lies in the $\vc{\xt\yt}{}$-neck region, as illustrated in \cref{figObviousTriangles},
\begin{figure}[ht]
\centering
\begin{tikzpicture}[yscale=2]
\def\pert{0.2}
\def\ppert{\pert*tanh(20*\mq*(\x-1.4))}
\def\mpert{\pert*tanh(-20*\lp*(\x+1.4))}

\def\lp{1/2}
\def\mqa{1/3}
\def\mqb{2/3}

\begin{scope}[xshift=-0.5cm]
\draw[->] (-0.5, 0) -- (1, 0) node[anchor=north]{\tiny$\log|\xt|$};
\draw[->] (0, -0.25) -- (0, 0.5) node[anchor=east]{\tiny$\arg \xt$};
\end{scope}

\begin{scope}[xshift=3.5cm]
\draw (0, -0.9) -- (0, 0.8);

\def\lp{1/2}
\def\mq{1/3}
\draw[domain=-2:-1, samples=100, smooth] plot ({\x}, {(exp(2*\x)*(\lp-\mq*\pert)-exp(-2*\x)*(\mq+\lp*\mpert))/(exp(2*\x)+exp(-2*\x))});
\draw[domain=-1.01:1.01, samples=100, smooth] plot ({\x}, {(exp(2*\x)*(\lp-\mq*\pert)-exp(-2*\x)*(\mq-\lp*\pert))/(exp(2*\x)+exp(-2*\x))});
\draw[domain=1:2, samples=100, smooth] plot ({\x}, {(exp(2*\x)*(\lp+\mq*\ppert)-exp(-2*\x)*(\mq-\lp*\pert))/(exp(2*\x)+exp(-2*\x))});

\def\lp{1/2}
\def\mq{2/3}
\draw[domain=-2:-1, samples=100, smooth] plot ({\x}, {(exp(2*\x)*(\lp-\mq*\pert)-exp(-2*\x)*(\mq+\lp*\mpert))/(exp(2*\x)+exp(-2*\x))});
\draw[domain=-1.01:1.01, samples=100, smooth] plot ({\x}, {(exp(2*\x)*(\lp-\mq*\pert)-exp(-2*\x)*(\mq-\lp*\pert))/(exp(2*\x)+exp(-2*\x))});
\draw[domain=1:2, samples=100, smooth] plot ({\x}, {(exp(2*\x)*(\lp+\mq*\ppert)-exp(-2*\x)*(\mq-\lp*\pert))/(exp(2*\x)+exp(-2*\x))});

\draw (1, 0.2) node{\small$L_0$};
\draw (0.8, 0.55) node{\small$L_1$};
\draw (-0.3, 0.3) node{\small$L_2$};


\end{scope}

\begin{scope}[xshift=8cm]
\def\lp{0}
\def\mq{1/3}
\draw[domain=-2:-1, samples=100, smooth] plot ({\x}, {(exp(2*\x)*(\lp-\mq*\pert)-exp(-2*\x)*(\mq+\lp*\mpert))/(exp(2*\x)+exp(-2*\x))});
\draw[domain=-1.01:1.01, samples=100, smooth] plot ({\x}, {(exp(2*\x)*(\lp-\mq*\pert)-exp(-2*\x)*(\mq-\lp*\pert))/(exp(2*\x)+exp(-2*\x))});
\draw[domain=1:2, samples=100, smooth] plot ({\x}, {(exp(2*\x)*(\lp+\mq*\ppert)-exp(-2*\x)*(\mq-\lp*\pert))/(exp(2*\x)+exp(-2*\x))});

\def\lp{1/2}
\def\mq{1/3}
\draw[domain=-2:-1, samples=100, smooth] plot ({\x}, {(exp(2*\x)*(\lp-\mq*\pert)-exp(-2*\x)*(\mq+\lp*\mpert))/(exp(2*\x)+exp(-2*\x))});
\draw[domain=-1.01:1.01, samples=100, smooth] plot ({\x}, {(exp(2*\x)*(\lp-\mq*\pert)-exp(-2*\x)*(\mq-\lp*\pert))/(exp(2*\x)+exp(-2*\x))});
\draw[domain=1:2, samples=100, smooth] plot ({\x}, {(exp(2*\x)*(\lp+\mq*\ppert)-exp(-2*\x)*(\mq-\lp*\pert))/(exp(2*\x)+exp(-2*\x))});

\def\lp{1/2}
\def\mq{2/3}
\draw[domain=-2:-1, samples=100, smooth] plot ({\x}, {(exp(2*\x)*(\lp-\mq*\pert)-exp(-2*\x)*(\mq+\lp*\mpert))/(exp(2*\x)+exp(-2*\x))});
\draw[domain=-1.01:1.01, samples=100, smooth] plot ({\x}, {(exp(2*\x)*(\lp-\mq*\pert)-exp(-2*\x)*(\mq-\lp*\pert))/(exp(2*\x)+exp(-2*\x))});
\draw[domain=1:2, samples=100, smooth] plot ({\x}, {(exp(2*\x)*(\lp+\mq*\ppert)-exp(-2*\x)*(\mq-\lp*\pert))/(exp(2*\x)+exp(-2*\x))});

\draw (1, 0.2) node{\small$L_0$};
\draw (0.8, 0.55) node{\small$L_1$};
\draw (0.6, -0.25) node{\small$L_2$};


\end{scope}

\end{tikzpicture}
\caption{The obvious triangles in the $\vc{\xt\yt}{}$-neck contributing to the product in the first (left) and fourth (right) cases.\label{figObviousTriangles}}
\end{figure}
whilst in the second (respectively third) case it stretches between the $\vc{\xt\yt}{}$- and $\vc{\yt\fact}{l}$- (respectively $\vc{\xt\fact}{m}$-) neck regions in the $\xt$- (respectively $\yt$-) axis part of $\Sigma'$ as shown in \cref{figObviousTriangles2}.
\begin{figure}[ht]
\centering
\begin{tikzpicture}[blob/.style={circle, draw=black, fill=black, inner sep=0, minimum size=\blobsize, line width=0.5mm}, critpt/.style={circle, draw=black, inner sep=0, minimum size=\ballsize}, noblob/.style={inner sep=0, minimum size=0}]
\def\ballsize{1.4cm}
\def\blobsize{0.03cm}
\def\r{2.5}

\draw (0,0) node[critpt, dotted]{};
\draw (0,0) node[blob]{};
\foreach \a in {0, 4}
{
\draw ($cos(360*\a/6)*(\r, 0)+sin(360*\a/6)*(0, \r)$) node[critpt, dashed]{};
}
\foreach \a in {2}
{
\draw ($cos(360*\a/6)*(\r, 0)+sin(360*\a/6)*(0, \r)$) node(pt0)[critpt, dotted]{};
\def\ballsize{0.7cm}
\draw ($cos(360*\a/6)*(\r, 0)+sin(360*\a/6)*(0, \r)$) node(pt0)[critpt]{};
\draw ($cos(360*\a/6)*(\r, 0)+sin(360*\a/6)*(0, \r)$) node(pt0)[blob]{};
}

\def\phi{(120*exp(2*\x)-0*exp(-2*\x))/(exp(2*\x)+exp(-2*\x))};
\draw[domain=-2.1:1.5, samples=100, smooth] plot ({0.2*exp(\x)*cos(\phi)}, {0.2*exp(\x)*sin(\phi)}) node(pt1)[noblob]{};
\begin{scope}[rotate=180, shift={(pt0)}]
\draw[domain=-2.1:1.5, samples=100, smooth] plot ({0.2*exp(\x)*cos(\phi)}, {0.2*exp(\x)*sin(\phi)}) node(pt2)[noblob]{};
\end{scope}
\draw[shorten <= -0.01cm, shorten >= -0.01cm] (pt1) -- (pt2);

\def\phi{(130*exp(2*\x)-180*exp(-2*\x))/(exp(2*\x)+exp(-2*\x))};
\draw[domain=-2.1:1.5, samples=100, smooth] plot ({0.2*exp(\x)*cos(\phi)}, {0.2*exp(\x)*sin(\phi)}) node(pt1)[noblob]{};
\begin{scope}[rotate=180, shift={(pt0)}]
\def\phi{(110*exp(2*\x)-180*exp(-2*\x))/(exp(2*\x)+exp(-2*\x))};
\draw[domain=-2.1:1.5, samples=100, smooth] plot ({0.2*exp(\x)*cos(\phi)}, {0.2*exp(\x)*sin(\phi)}) node(pt2)[noblob]{};
\end{scope}
\draw[shorten <= -0.01cm, shorten >= -0.01cm] (pt1) to [bend left=5] (pt2);

\draw[->, >=stealth, line width=0.035cm] (1.5, 1.8) -- (-0.7, 1.05);

\end{tikzpicture}
\caption{The $\xt$-projection of the obvious triangle between $\vc{0}{1,0}$, $\vc{0}{1,1}$, and $\vc{\yt\fact}{1}$, when $(p,q)=(4,3)$.\label{figObviousTriangles2}}
\end{figure}
We claim that there are no other triangles, whence \eqref{eqFloerProduct} is the non-degenerate multiplication $e_{12} \otimes e_{01} \mapsto \pm e_{02}$, where $e_{ij}$ is the generator of $HF^*(L_i, L_j)$ corresponding to the unique intersection point of $L_i$ and $L_j$.  In fact there are two natural generators, differing by sign, and the $\pm$ in the multiplication depends on the specific generators chosen as well as the orientation on the moduli space of holomorphic triangles, but we shall argue shortly that all signs can be arranged to be positive.

To prove the claim, suppose $u$ is a non-constant holomorphic triangle with boundary on $L_\cup$, defined to be the union of the $L_i$.  By the open mapping theorem, after deleting $L_\cup$ the image of $u$ consists of a union of components of $\Sigma \setminus L_\cup$ whose closures in $\Sigma$ are compact.  Such components naturally correspond to generators of $\mathrm{H}_2(\Sigma/L_\cup; \Z) \cong \mathrm{H}_2(\Sigma, L_\cup; \Z)$, which the long exact sequence of the pair tells us is isomorphic to the kernel of the inclusion pushforward $\mathrm{H}_1(L_\cup; \Z) \rightarrow \mathrm{H}_1(\Sigma; \Z)$.  In all of our cases, the space $L_\cup$ is homeomorphic to three circles that touch pairwise, so its $\mathrm{H}_1$ has rank four.  Its image in $\mathrm{H}_1(\Sigma; \Z)$ meanwhile, contains the classes of $L_0$, $L_1$ and $L_2$, which are linearly independent since the vanishing cycles form a basis for $\mathrm{H}_1(\Sigma; \Z)$.  We conclude that $\mathrm{H}_2(\Sigma, L_\cup; \Z)$ has rank at most one, so there is at most one component of $\Sigma \setminus L_\cup$ that $u$ can enter.  We have already seen that there is at least one component, and counted the obvious triangle that it contributes, so we conclude that there are no other triangles.

To compute the signs we should equip each $L_i$ with an orientation and the non-trivial spin structure (this is the one that is induced by viewing $L_i$ as the boundary of a Lefschetz thimble in the total space of our Morsified fibration), and then calculate the induced orientation on the moduli space of holomorphic triangles.  As mentioned above, however, we can choose the generators of the morphism spaces so that all of the signs turn out to be positive.  We make these choices by induction on the \emph{length} of the morphism, defined to be the maximal length of a chain of non-identity morphisms whose composition is the given morphism (so, for example, the length of a generator of $HF^*(\vc{0}{l,m}, \vc{0}{L,M})$ is $l-L+m-M$).

First, choose arbitrary signs for the generators of length $1$.  Now modify these as follows.  Start at the bottom left-hand square in the quiver picture \cref{figLoopQuiver}---explicitly this corresponds to the square
\[
\begin{tikzcd}
\vc{0}{p-1,q-2} \arrow{r} & \vc{0}{p-2,q-2}
\\ \vc{0}{p-1,q-1} \arrow{u} \arrow{r} & \vc{0}{p-2,q-1} \arrow{u}
\end{tikzcd}
\]
If this commutes then do nothing, otherwise reverse the sign of the morphism along the top edge.  Then consider the next square to the right and do the same, and continue all the way along to the bottom right-hand square.  Now run the same procedure on the next row of squares up, and then the next, all the way to the top.  In this way we obtain sign choices for all generators of length $1$ such that the small squares commute.

For each morphism space of length $k>1$ we choose its generator by expressing the space as a composition of $k$ morphism spaces of length $1$ and taking the positive generator of each factor.  There may be several different ways of decomposing the space into length $1$ factors, but any two can be joined by a chain of moves where one commutes across a small square.  We have arranged it so that these moves have no effect, so there is no ambiguity in the overall procedure.  This proves that all signs can be taken to be $+$.

We conclude:

\begin{prop}
\label{ALoopUngraded}
There is a model for $\mathcal{A}$ which, under the identification \eqref{ObjectMatch}, is described by the quiver in \cref{figLoopQuiver} up to yet-to-be-determined gradings.\hfill$\qed$
\end{prop}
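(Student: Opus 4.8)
The plan is to combine the morphism computation of \cref{Isotopies} with the triangle count and sign analysis carried out above. From \cref{propAfromPrelim} and the Hamiltonian isotopies we have an explicit model of $\mathcal{A}$ whose objects are the $pq-1$ vanishing cycles $\vc{0}{l,m}$, $\vc{\yt\fact}{l}$, $\vc{\xt\fact}{m}$, $\vc{\xt\yt}{}$, and whose morphism complexes are all at most one-dimensional with identically vanishing differential, occurring in exactly the pattern recorded after \cref{rmkEasyIntersections}. Under \eqref{ObjectMatch} the objects biject with the vertices of the quiver of \cref{figLoopQuiver}, and one checks that the one-dimensional $\Hom$-spaces of $\mathcal{A}$ sit in exactly the positions where the path algebra of that quiver-with-relations is one-dimensional: the morphisms among the $\vc{0}{l,m}$ match the $(p-1)\times(q-1)$ commuting grid, those to the three waist cycles match the arrows into the three outer blocks, and the fact that a $\Hom$-space into an outer block is nonzero only when the relevant index matches is precisely the effect of the \emph{dashed compositions vanish} relation combined with the commuting squares. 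So, as a graded vector space, the cohomology-level endomorphism algebra of $\mathcal{A}$ already agrees with the path algebra; it remains only to match the multiplication (and eventually the grading, deferred to the next subsection).

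For the products I would use Seidel's combinatorial description: since $\mathcal{A}$ is directed, the Floer product $HF^*(L_1,L_2)\otimes HF^*(L_0,L_1)\to HF^*(L_0,L_2)$ counts immersed triangles on $L_\cup\coloneqq L_0\cup L_1\cup L_2$, with no constant contributions and nothing whose boundary ordering disagrees with the categorical order. For such a product to be possibly nonzero all three Floer groups must be nonzero, which forces the four cases already singled out: $L_0=\vc{0}{l,m}$, $L_1=\vc{0}{L,M}$ with $l\ge L$, $m\ge M$, $(l,m)\ne(L,M)$, and $L_2$ equal to $\vc{\xt\yt}{}$, $\vc{\yt\fact}{L}$, $\vc{\xt\fact}{M}$, or $\vc{0}{r,s}$ with $r\le L$, $s\le M$. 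In each case there is one obvious triangle, living in the $\vc{\xt\yt}{}$-neck (first and fourth cases) or stretching between that neck and a $\vc{\yt\fact}{L}$- or $\vc{\xt\fact}{M}$-neck (second, third). To rule out any other triangle, one argues that a non-constant triangle must sweep out a union of components of $\Sigma\setminus L_\cup$ with compact closure; such components correspond to generators of $\mathrm{H}_2(\Sigma,L_\cup)$, and this group has rank at most one because $[L_0]$, $[L_1]$, $[L_2]$ are part of a basis of $\mathrm{H}_1(\Sigma)$. Since the obvious triangle already witnesses one component, there is nothing else. Hence every triple product of non-identity morphisms into a nonzero target is non-degenerate, equal to $\pm e_{02}$, while products into a zero target vanish --- exactly the relations of \cref{figLoopQuiver}.

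Finally, to see that all these signs can be taken to be $+$, orient each $L_i$ and equip it with the non-trivial spin structure (the one inherited from its Lefschetz thimble), compute the induced orientations on the triangle moduli spaces, and normalise the generators: choose arbitrary signs for the length-one morphisms, then sweep across the grid of squares --- bottom-left to bottom-right, then row by row upwards --- reversing the top edge of any square that fails to commute. This terminates with every square commuting, and the result is unambiguous because any two decompositions of a longer morphism into length-one steps differ by a sequence of commuting-square moves, which have been arranged to have no effect; taking each longer generator to be the corresponding composite then identifies the cohomology algebra of $\mathcal{A}$ on the nose with the path algebra of \cref{figLoopQuiver}. Once the grading is determined (all arrows in degree $0$, in the next subsection), directedness forces the higher $A_\infty$-operations to vanish and $\mathcal{A}$ is intrinsically formal, just as for $\mathcal{B}$ in \cref{LoopEndAlgebra}.

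I expect the genuine obstacle to be the ``no other triangles'' step --- the homological bound on $\mathrm{H}_2(\Sigma,L_\cup)$ together with the check that the single obvious triangle is a regular immersed holomorphic polygon contributing $+1$ after normalisation. The sign bookkeeping is the most laborious ingredient, but it closes up precisely because the only relations among the length-one morphisms are the square relations.
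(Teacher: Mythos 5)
Your proposal reproduces the paper's own argument: the additive identification from the intersection pattern of \cref{Isotopies} together with \eqref{ObjectMatch}, the restriction to the four possibly non-trivial products by directedness, the uniqueness of the holomorphic triangle via the bound $\operatorname{rk}\mathrm{H}_2(\Sigma, L_\cup;\Z)\leq 1$ coming from the linear independence of $[L_0],[L_1],[L_2]$ in $\mathrm{H}_1(\Sigma;\Z)$, and the inductive sign normalisation (sweeping through the grid of squares and defining longer generators as composites, well-defined because any two decompositions differ by commuting-square moves). This is essentially the same proof as in the paper, and it is correct.
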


\subsection{Gradings and completing the proof}

Recall from \cite{SeidelGraded, SeidelBook} that to equip the Fukaya category of a symplectic manifold $X$ with a $\Z$-grading one must choose a homotopy class of trivialisation of the square $K_X^{-2}$ of the anticanonical bundle of $X$; this is possible if and only if $2c_1(X)$ vanishes in $\mathrm{H}^2(X; \Z)$, and in this case the set of choices forms a torsor for $\mathrm{H}^1(X; \Z)$.  We are interested in the Fukaya--Seidel category $\mathcal{F}(\wt)$ and the subcategory $\mathcal{A}$ of the compact Fukaya category of the smooth fibre, for which the relevant choices of $X$ are $\C^2$ and $\Sigma$ respectively.  The former has a unique grading, defined by the section $\sigma = (\partial_{\xt} \wedge \partial_{\yt})^{\otimes 2}$ of $K_{\C^2}^{-2}$, which induces a grading of the latter, and it is with respect to this induced grading that the quasi-equivalence $\Tw \mathcal{A} \rightarrow \mathcal{F}(\wt)$ is graded.

Trivialisations of $K_\Sigma^{-2}$ correspond naturally to line fields $\ell$ on $\Sigma$, i.e.~sections of the real projectivisation $\P_\R T\Sigma$ of the tangent bundle, and given a choice of $\ell$ the Lagrangian $L$ represented by an embedded curve $\gamma : S^1 \rightarrow \Sigma$ is gradable if and only if the sections $\gamma^* \ell$ and $\gamma^* TL$ of $\gamma^* \P_\R T\Sigma$ are homotopic.  In this case a \emph{grading} of $L$ is a homotopy class of homotopy between them.  At each point of $L$ we can measure the anticlockwise angle from $\ell$ to $TL$, and we denote this by $\pi \alpha$, where $\alpha$ is an element of $\R/\Z$.  The gradings of $L$ are then in bijection with lifts $\alpha^\#$ of this element to $\R$.  Given two graded Lagrangians $L_0$ and $L_1$, which intersect transversely at a point $x$, let their corresponding lifts at $x$ be $\alpha_0^\#$ and $\alpha_1^\#$ respectively.  By \cite[Example 11.20]{SeidelBook}, the grading of $x$ as a generator of the Floer complex $CF^*(L_0, L_1)$ is then given by
\begin{equation}
\label{eqDegree}
\lfloor \alpha_1^\# - \alpha_0^\# \rfloor + 1.
\end{equation}
From now on we will use $\ell$ to denote the specific (homotopy class of) line field corresponding to the grading on $\Sigma$ induced by the grading on $\C^2$.

To compute $\ell$ note that each Lefschetz thimble $\Delta$ in $\mathcal{F}(\wt)$ is gradable with respect to $\sigma$ ($\Delta$ is contractible so the grading obstruction trivially vanishes), and each choice of grading induces a grading of the corresponding vanishing cycle $V \subset \Sigma$ with respect to $\ell$.  In particular, all of the vanishing cycles $V$ are gradable with respect to $\ell$, and since they form a basis for $\mathrm{H}_1(\Sigma; \Z)$ this property---that $\ell$ has winding number zero around each $V$---determines $\ell$ uniquely.

\begin{rmk}
Recall that the ordering on $\mathcal{A}$ is determined by a choice of starting direction in the base $\C$, and strictly this choice enters into the construction of the bijection between gradings of a thimble $\Delta$ and of the corresponding vanishing cycle $V$.  This is unimportant for our present purposes, but we will see a manifestation of it in \cref{BPAModel}, where a change in this direction leads to a change in the grading of a vanishing cycle.
\end{rmk}

Using this characterisation, one can draw $\ell$ as shown in \cref{figLineField}: the left-hand diagram depicts a foliation of the line $\{u+v=\eps\}$ with the points $(\eps, 0)$ and $(0, \eps)$ deleted, and we lift its tangent distribution to give the line field on the branched cover comprising the $\{\fact=\eps\}$ part of $\Sigma'$ and the attached neck regions; the right-hand diagram depicts a foliation whose tangent distribution gives the line field on the $\xt$-axis part of $\Sigma'$ and the attached neck regions in the case $q=4$---it is clear how this generalises to other values of $q$ and that a similar picture can be drawn for the $\yt$-axis part.
\begin{figure}[ht]
\centering
\begin{tikzpicture}[scale=2, ball/.style={circle, dotted, draw=black, inner sep=0, minimum size=\ballsize}]
\def\ballsize{1cm}

\begin{scope}
\clip (-1.8, -1.2) rectangle (1.8, 1.2);
\draw (1,0) node[ball]{};
\draw (-1,0) node[ball]{};
\draw (-2, 0) -- (2, 0);
\draw (-1, 0) to [bend right=30, looseness=0.8] (1, 0);
\draw (-1, 0) to [bend right=60, looseness=1] (1, 0);
\draw (-1, 0) to [bend right=90, looseness=1.3] (1, 0);
\draw (-1, 0) to [bend right=120, looseness=2] (1, 0);
\draw (-1, 0) to [bend right=150, looseness=4.3] (1, 0);
\draw (-1, 0) to [bend right=-30, looseness=0.8] (1, 0);
\draw (-1, 0) to [bend right=-60, looseness=1] (1, 0);
\draw (-1, 0) to [bend right=-90, looseness=1.3] (1, 0);
\draw (-1, 0) to [bend right=-120, looseness=2] (1, 0);
\draw (-1, 0) to [bend right=-150, looseness=4.3] (1, 0);
\end{scope}

\begin{scope}[xshift=4cm]
\clip (-1.8, -1.2) rectangle (1.8, 1.2);
\draw (0,0) node[ball]{};
\foreach \theta in {0, 120, 240}
{
\begin{scope}[rotate=\theta]
\draw (1,0) node[ball]{};
\draw (0, 0) -- (3, 0);
\draw (0, 0) -- ($cos(60)*(3,0)+sin(60)*(0,3)$);
\draw (0, 0) .. controls (0.5, 0.2) and (0.6, 0.2) .. (1, 0);
\draw (0, 0) .. controls (0.5, 0.4) and (1.2, 0.5) .. (1, 0);
\draw (0, 0) .. controls (0.8, 0.8) and (1.5, 0.7) .. (1, 0);
\draw (0, 0) .. controls (1.2, 1.6) and (2.6, 0.8) .. (1, 0);
\draw (0, 0) .. controls (0.5, -0.2) and (0.6, -0.2) .. (1, 0);
\draw (0, 0) .. controls (0.5, -0.4) and (1.2, -0.5) .. (1, 0);
\draw (0, 0) .. controls (0.8, -0.8) and (1.5, -0.7) .. (1, 0);
\draw (0, 0) .. controls (1.2, -1.6) and (2.6, -0.8) .. (1, 0);
\end{scope}
}
\end{scope}

\end{tikzpicture}
\caption{Foliations defining the line field $\ell$ used to grade $\Sigma$.\label{figLineField}}
\end{figure}
As usual, the dotted circles represent the boundaries of the neck regions.  Note that on each neck region the line field is longitudinal, so the different pictures glue together.

Each $\vc{0}{l,m}$ is approximately tangent to $\ell$ along its approximately straight segments in the three components of $\Sigma'$, and we choose to grade it so that the homotopy from $TL$ to $\ell$ is approximately constant on these regions.  This is consistent, in the sense that these homotopies patch together across the neck regions.  On each neck region, the lift $\alpha^\#$ is valued approximately between $0$ and $1/2$, and where two of these cycles intersect the one with the greater value of $\theta_{l,m}$ is `steeper' and hence has greater $\alpha^\#$.  We conclude that for distinct $(l,m)$ and $(L,M)$ with $l\geq L$ and $m\geq M$ the generator of $HF^*(\vc{0}{l,m}, \vc{0}{L,M})$ lies in degree $0$ (in the notation of \eqref{eqDegree} we have $1/2 > \alpha_0^\# > \alpha_1^\# > 0$).

Each of the other vanishing cycles is a waist curve on a neck region and as such is orthogonal to the line field.  We grade it so that the lift $\alpha^\#$ is $-1/2$.  This puts the generators of
\[
HF^*(\vc{0}{l,m}, \vc{\yt\fact}{l}) \text{, } HF^*(\vc{0}{l,m}, \vc{\xt\fact}{m}) \text{, and } HF^*(\vc{0}{l,m}, \vc{\xt\yt}{})
\]
all in degree $0$.  This means that the identification \eqref{ObjectMatch} matches up gradings, and we deduce:

\begin{thm}[\cref{Thm1}, loop polynomial case]
Under \eqref{ObjectMatch}, the $\Z$-graded $A_\infty$-category $\mathcal{A}$ is described by the quiver with relations in \cref{figLoopQuiver} and is formal.  In particular, by \cref{LoopEndAlgebra} it is quasi-equivalent to $\mathcal{B}$, and hence there is an induced quasi-equivalence
\[
\mathrm{mf}(\C^2, \Gamma_\w, \w) \simeq \mathcal{F}(\wt).
\]
\end{thm}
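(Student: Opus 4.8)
The plan is to assemble the results of the preceding subsections. By \cref{ALoopUngraded} we already know that, under the identification \eqref{ObjectMatch}, the category $\mathcal{A}$ agrees with the quiver-with-relations of \cref{figLoopQuiver} as an \emph{ungraded} $A_\infty$-category; what remains is to pin down the $\Z$-grading and to deduce formality. First I would record the grading computation carried out above: with respect to the line field $\ell$ on $\Sigma$ induced by the section $\sigma = (\partial_{\xt} \wedge \partial_{\yt})^{\otimes 2}$ of $K_{\C^2}^{-2}$, every vanishing cycle is gradable, since each bounds a contractible Lefschetz thimble in the total space of the Morsified fibration; and the explicit description of $\ell$ in \cref{figLineField}, fed into the degree formula \eqref{eqDegree}, shows that with the grading choices made there every non-identity morphism in our model of $\mathcal{A}$ lies in degree $0$. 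Hence $\mathcal{A}$ is a $\Z$-graded $A_\infty$-category with cohomology concentrated in degree $0$.

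Formality is then automatic: passing to a minimal (cohomology-level) model, the operation $\mu^k$ has degree $2-k$, which is negative for $k \geq 3$, so there is no room for any non-trivial higher product and $\mathcal{A}$ is quasi-isomorphic to its cohomology category equipped with the trivial $A_\infty$-structure. By the computations of the preceding subsections (intersection patterns, compositions, and gradings) this cohomology category is isomorphic, under \eqref{ObjectMatch}, to the path algebra of \cref{figLoopQuiver}; by \cref{LoopEndAlgebra} the same is true of $\mathcal{B}$, which is likewise formal. Two formal $A_\infty$-categories with isomorphic cohomology categories are quasi-equivalent, so $\mathcal{A} \simeq \mathcal{B}$ as $\Z$-graded $A_\infty$-categories.

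Finally I would pass to twisted complexes. Seidel's theorem \cite[Theorem 18.24]{SeidelBook} gives a quasi-equivalence $\Tw \mathcal{A} \xrightarrow{\sim} \mathcal{F}(\wt)$, while \cref{BGenerates} gives a quasi-equivalence $\Split(\Tw \mathcal{B}) \xrightarrow{\sim} \Split(\mathrm{mf}(\C^2, \Gamma_\w, \w))$; since the objects of $\mathcal{B}$ form a full exceptional collection, $\Tw \mathcal{B}$ is already idempotent complete by \cite[Remark 5.14]{SeidelBook}, so the $\Split$'s are redundant and we obtain $\Tw \mathcal{B} \xrightarrow{\sim} \mathrm{mf}(\C^2, \Gamma_\w, \w)$. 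The functor $\Tw$ preserves quasi-equivalences, so $\mathcal{A} \simeq \mathcal{B}$ induces $\Tw \mathcal{A} \simeq \Tw \mathcal{B}$, and stringing the three equivalences together gives
\[
\mathcal{F}(\wt) \simeq \Tw \mathcal{A} \simeq \Tw \mathcal{B} \simeq \mathrm{mf}(\C^2, \Gamma_\w, \w),
\]
which is the desired statement.

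All of the genuine content has already been established---in \cref{propAfromPrelim}, in \cref{ALoopUngraded}, and in the identification of the line field $\ell$---so this final assembly is routine. The one point worth a little attention is the compatibility of gradings: one must check that the grading on $\Sigma$ used in Seidel's construction to make $\Tw \mathcal{A} \to \mathcal{F}(\wt)$ graded is exactly the one induced from the canonical grading of $\C^2$ via $\sigma$, and that the bijection between gradings of a thimble and of its vanishing cycle is set up consistently with the chosen clockwise ordering of vanishing paths. Both are standard once $\ell$ has been pinned down by the vanishing of its winding number around each vanishing cycle.
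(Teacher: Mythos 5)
Your proposal is correct and follows essentially the same route as the paper: combine \cref{ALoopUngraded} with the grading computation via the line field $\ell$ and formula \eqref{eqDegree} to get the graded quiver description, deduce formality from concentration in degree $0$ (no room for higher products on a minimal model), match with $\mathcal{B}$ via \cref{LoopEndAlgebra}, and then chain $\mathcal{F}(\wt) \simeq \Tw\mathcal{A} \simeq \Tw\mathcal{B} \simeq \mathrm{mf}(\C^2, \Gamma_\w, \w)$ using Seidel's theorem, \cref{BGenerates}, and the idempotent-completeness of categories with a full exceptional collection, exactly as outlined in \cref{sscOutline}. The extra remarks on grading compatibility are points the paper also addresses, so nothing essential is missing.
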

\begin{proof}
The cohomology-level version of the first statement follows from \cref{ALoopUngraded} plus the above grading computations.  Formality then follows immediately from directedness and the fact that the morphisms are concentrated in degree $0$ as in \cref{LoopEndAlgebra}.  This shows that $\mathcal{A}$ and $\mathcal{B}$ are quasi-equivalent, and the final statement then follows from the argument outlined in \cref{sscOutline}.
\end{proof}


\section{B-model for chain polynomials}
\label{BModelChain}

\subsection{The basic objects}
We now deal with the case of the chain polynomial $\w=x^py+y^q$.  This time the maximal grading group $L$ is the abelian group freely generated by $\vec{x}$, $\vec{y}$ and $\vec{c}$ modulo the relations 
\[
p\vec{x}+\vec{y}=q\vec{y}=\vec{c}.
\]
In contrast to the loop case, we have $L/\Z\vec{c}\cong \Z/(pq)$, generated by $\vec{x}$ but not by $\vec{y}=-p\vec{x}$. In keeping with our earlier notation let $S$ be the $L$-graded algebra $\C[x,y]$, with $x$ and $y$ in degrees $\vec{x}$ and $\vec{y}$ respectively, and let $R=S/(\w)$.  Let $\fac$ now denote $x^{p}+y^{q-1}$ so that $\w=y\fac$.

The stack $[\w^{-1}(0)/\Gamma_\w]$ has two components, whose structure sheaves correspond to the matrix factorisations
\[
\obj{y}{}^\bullet = ( \cdots \rightarrow S(-\vec{c}) \xrightarrow{\fac} S(-\vec{y}) \xrightarrow{y} S \rightarrow \cdots ),
\]
and
\[
\obj{\fac}{}^\bullet = ( \cdots \rightarrow S(-\vec{c}) \xrightarrow{y} S(-\vec{c}+\vec{y}) \xrightarrow{\fac} S \rightarrow \cdots ).
\]
We will need the shifts
\[
\obj{y}{j} = \obj{y}{}((j+1-q)\vec{y}) \quad \text{for } j=1, \dots, q-1.
\]
Note that $\obj{\fac}{}[1] \cong \obj{y}{}(\vec{y})$.

The unique singular point of the stack is still the origin, and the objects we need that are supported at this point are the $\obj{0}{i,j}$ defined by
\begin{center}
	\begin{tikzcd}[row sep=7ex, column sep=10ex]
		S(\vec{y}) \arrow{r}{y^j} \arrow{dr}[near start, outer sep=-2pt]{-x^i} \ar[d, phantom, description, "\cdots\hskip7ex\bigoplus\phantom{\hskip7ex\cdots}"]
		&  S((j+1)\vec{y}) \arrow{r}{y^{q-j}} \arrow{dr}[near start]{x^i} \ar[d, phantom, description, "\bigoplus"]
		& S(\vec{c}+\vec{y}) \ar[d, phantom, description, "\phantom{\cdots\hskip7ex}\bigoplus\hskip7ex\cdots"]
		\\ S(-\vec{c}+i\vec{x}+(j+1)\vec{y}) \arrow{ur}[near start, outer sep=-1pt]{x^{p-i}y} \arrow{r}{y^{q-j}}
		& S(i\vec{x}+\vec{y}) \arrow{ur}[near start, outer sep=-1pt]{-x^{p-i}y} \arrow{r}{y^j}
		& S(i\vec{x}+(j+1)\vec{y})
	\end{tikzcd}
\end{center}
for $i=1, \dots, p-1$ and $j=1, \dots, q-1$, obtained by stabilising $R(i\vec{x}+(j+1)\vec{y})/(x^i,y^j)$.


\subsection{Morphisms between the $\obj{y}{}$'s and $\obj{\fac}{}$}

For all $l$ in $L$, and all integers $m$, we have $\Hom^{2m}(\obj{y}{},\obj{y}{}(l)) \cong (R/(y, \fac))_{m\vec{c}+l}$ and $\Hom^{2m-1}(\obj{y}{},\obj{y}{}(l))=0$.  The analogue of \cref{GradingIdeal} and \cref{GradingZero}, proved by similar arguments, is now:

\begin{lem}
\label{ChainGrading}
Suppose $a$ and $b$ are integers, with $a \leq p-1$, and $s$ is an element of $S$ (or $R$) which is homogeneous modulo $\vec{c}$, of degree $a\vec{x}+b\vec{y} \mod \vec{c}$.  Then:
\begin{enumerate}[(i)]
\item\label{chgr1} $s$ is divisible by $x^a$.
\item\label{chgr2} If also $b \leq q-1$ then $s$ lies in the ideal $(x^ay^b, x^{p+a})$.
\item\label{chgr3} If $a=b=0$ then the non-constant terms of $s$ lie in $(x^{pq}, x^py, y^q)$.\hfill$\qed$
\end{enumerate}
\end{lem}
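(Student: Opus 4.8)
The plan is to mimic the proofs of \cref{GradingIdeal} and \cref{GradingZero}, replacing the arithmetic of the quotient $L/\Z\vec{c}$, which in the chain case is $\Z/(pq)$ rather than $\Z/(pq-1)$. Concretely, the defining relations $p\vec{x}+\vec{y}=q\vec{y}=\vec{c}$ give $\vec{y}\equiv -p\vec{x}\pmod{\vec{c}}$, and $\vec{x}$ has order exactly $pq$ modulo $\vec{c}$; hence a monomial $x^uy^v$ has degree $a\vec{x}+b\vec{y}\pmod{\vec{c}}$ if and only if $(u-a)-p(v-b)\equiv 0\pmod{pq}$. As in the loop case it suffices to verify each assertion monomial-by-monomial in $s$.

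For (\ref{chgr1}) we may assume $0<a\le p-1$, the claim being vacuous otherwise. If some monomial $x^uy^v$ of $s$ had $u<a$, then $-(p-1)\le u-a\le -1$, so $u-a\not\equiv 0\pmod p$; but reducing $(u-a)-p(v-b)\equiv 0\pmod{pq}$ modulo $p$ forces $u-a\equiv 0\pmod p$, a contradiction, so $x^a\mid s$. For (\ref{chgr2}) write $s=x^a t$ and suppose some monomial of $t$ is divisible by neither $y^b$ nor $x^p$; then $s$ has a monomial $x^uy^v$ with $0\le u-a\le p-1$ and $-(q-1)\le v-b\le -1$, whence $(u-a)-p(v-b)$ lies in $[p,\,pq-1]$, an interval containing no nonzero multiple of $pq$---contradiction. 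For (\ref{chgr3}), with $a=b=0$ the congruence reads $u-pv\equiv 0\pmod{pq}$: if $u=0$ then $q\mid v$, so a non-constant term has $v\ge q$; if $v=0$ then $pq\mid u$, so $u\ge pq$; and if $u,v\ge 1$ with $u\le p-1$ and $v\le q-1$, then $u-pv\in[-(pq-p-1),\,-1]\subset(-pq,0)$, again impossible---so in fact $u\ge p$ (and then $x^uy^v\in(x^py)$) or $v\ge q$.

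I do not anticipate any real obstacle: as in the loop-case lemmas, the only point needing care is to make the interval estimates in (\ref{chgr2}) and (\ref{chgr3}) tight enough to trap the relevant integer strictly between consecutive multiples of $pq$. One should also keep in mind that $\fac=x^p+y^{q-1}$ here, so the ideals in the statement are the correct chain-case analogues of those in \cref{GradingIdeal,GradingZero}---for instance $(x^ay^b,x^{p+a})$ replaces $(x^a,y^{q-1+b})\cap(x^{p-1+a},y^b)$---and these are exactly the containments that will be fed into the subsequent $\Hom$-computations between the $\obj{y}{j}$, $\obj{\fac}{}$, and $\obj{0}{i,j}$.
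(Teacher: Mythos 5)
Your proof is correct and is exactly the argument the paper intends: it states \cref{ChainGrading} as "proved by similar arguments" to \cref{GradingIdeal,GradingZero}, namely the monomial-by-monomial congruence $(u-a)-p(v-b)\equiv 0 \pmod{pq}$ coming from $\vec{y}\equiv -p\vec{x}$ and $L/\Z\vec{c}\cong\Z/(pq)$, together with the interval estimates you give. Your bounds $[p,\,pq-1]$ in (ii) and $[-(pq-p-1),\,-1]$ in (iii) are tight and trap the integer strictly between consecutive multiples of $pq$, so nothing is missing.
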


Applying this to the above computation we obtain:

\begin{lem}
The objects $\obj{y}{1}, \dots \obj{y}{q-1}$ are exceptional and pairwise orthogonal.\hfill$\qed$
\end{lem}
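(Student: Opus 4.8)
The plan is to follow the proof of \cref{KIorthonormal} in the loop case almost verbatim. First I would feed the shift identity
\[
\Hom^\bullet(\obj{y}{j}, \obj{y}{J}) \cong \Hom^\bullet(\obj{y}{}, \obj{y}{}((J-j)\vec{y})),
\]
which follows at once from $\obj{y}{j} = \obj{y}{}((j+1-q)\vec{y})$, into the computation $\Hom^{2m}(\obj{y}{}, \obj{y}{}(l)) \cong (R/(y,\fac))_{m\vec{c}+l}$ (with odd-degree morphisms vanishing) recorded just above. This reduces the problem to reading off the graded pieces $(R/(y,\fac))_{m\vec{c}+(J-j)\vec{y}}$. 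I would also record the elementary observation that $\fac = x^p + y^{q-1}$, so that $x^p = \fac - y^{q-1} \in (y, \fac)$; consequently $(y^b, x^p) \subseteq (y,\fac)$ for every $b \geq 1$, and $(x^{pq}, x^p y, y^q) \subseteq (y,\fac)$ as well (the first generator being a power of $x^p$).

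For pairwise orthogonality, fix $j \neq J$. Adding a copy of $\vec{c} = q\vec{y}$ if $J < j$, I may write the relevant degree as $b\vec{y} \bmod \vec{c}$ with $1 \leq b \leq q-1$. Then \cref{ChainGrading}(\ref{chgr2}), applied with $a = 0$, shows that any element of $R$ homogeneous modulo $\vec{c}$ of this degree lies in $(y^b, x^p) \subseteq (y,\fac)$, hence maps to zero in $R/(y,\fac)$. Therefore every morphism between $\obj{y}{j}$ and $\obj{y}{J}$ vanishes, in either direction.

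For exceptionality, set $j = J$, so the degree in question is $0 \bmod \vec{c}$. By \cref{ChainGrading}(\ref{chgr3}) the non-constant terms of any such element lie in $(x^{pq}, x^p y, y^q) \subseteq (y,\fac)$, so only constants survive in $R/(y,\fac)$. Since $L/\Z\vec{c} \cong \Z/pq$ is finite, $\Z\vec{c}$ is infinite cyclic, so a constant (of $L$-degree $0$) contributes to $\Hom^{2m}$ only when $m = 0$; hence $\Hom^i(\obj{y}{j}, \obj{y}{j}) = 0$ for $i \neq 0$ and $\Hom^0(\obj{y}{j}, \obj{y}{j}) = \C$ is spanned by the identity. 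This is exactly the statement that $\obj{y}{j}$ is exceptional.

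The computation is routine given \cref{ChainGrading}; the only place that calls for any care is the grading bookkeeping---choosing the representative $b\vec{y}$ of the shifted degree so that \cref{ChainGrading} can be applied with $a = 0$, and verifying that the ``only constants survive'' conclusion genuinely annihilates all the shifted morphism groups $\Hom^{2m}$ with $m \neq 0$, not merely the odd-degree ones.
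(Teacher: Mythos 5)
Your argument is correct and is exactly the paper's (the paper leaves the details implicit, simply applying \cref{ChainGrading} to the computation $\Hom^{2m}(\obj{y}{},\obj{y}{}(l)) \cong (R/(y,\fac))_{m\vec{c}+l}$ with vanishing odd morphisms). Your bookkeeping—reducing to degree $b\vec{y} \bmod \vec{c}$ with $1 \leq b \leq q-1$ and $a=0$ for orthogonality, using $x^p = \fac - y^{q-1} \in (y,\fac)$, and invoking \cref{ChainGrading}(\ref{chgr3}) together with the infinite order of $\vec{c}$ for exceptionality—is precisely the intended filling-in of those details.
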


Using the fact that $\obj{\fac}{}[1] \cong \obj{y}{}(\vec{y})$, we also get:

\begin{lem}
The object $\obj{\fac}{}$ is exceptional and is orthogonal to the $\obj{y}{j}$. \hfill$\qed$
\end{lem}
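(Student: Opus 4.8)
The plan is to deduce everything from the computation of $\Hom^\bullet(\obj{y}{}, \obj{y}{}(l))$ already carried out, by exploiting the isomorphism $\obj{\fac}{}[1] \cong \obj{y}{}(\vec{y})$ noted above, equivalently $\obj{\fac}{} \cong \obj{y}{}(\vec{y})[-1]$. Throughout we use that $\w = y\fac$, so that $R/(y,\fac) = S/(y,\fac) = \C[x]/(x^p)$ with $x$ in degree $\vec{x}$, and that $x^p = \fac - y^{q-1}$ lies in $(y,\fac)$ since $q \geq 2$.

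For exceptionality: the two cohomological shifts $[-1]$ in $\Hom^\bullet(\obj{y}{}(\vec{y})[-1], \obj{y}{}(\vec{y})[-1])$ contribute $+1$ and $-1$ to the degree and so cancel, as does the common $L$-grading shift $(\vec{y})$, giving $\Hom^\bullet(\obj{\fac}{}, \obj{\fac}{}) \cong \Hom^\bullet(\obj{y}{}, \obj{y}{})$. The latter is $(R/(y,\fac))_{m\vec{c}}$ in degree $2m$ and vanishes in odd degrees, and by \cref{ChainGrading}(\ref{chgr3}) — using $x^{pq} \in (x^p) \subseteq (y,\fac)$ — every non-constant element of degree $0 \bmod \vec{c}$ dies in $R/(y,\fac)$, so $(R/(y,\fac))_{m\vec{c}}$ is $\C$ for $m=0$ and $0$ otherwise. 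Hence $\End^\bullet(\obj{\fac}{})$ is $\C$ in degree $0$ and vanishes elsewhere, so $\obj{\fac}{}$ is exceptional; this is the exact analogue of \cref{KIIIends}.

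For orthogonality: combining $\obj{\fac}{} \cong \obj{y}{}(\vec{y})[-1]$ with $\obj{y}{j} = \obj{y}{}((j+1-q)\vec{y})$ and tracking the single surviving shift, $\Hom^i(\obj{\fac}{}, \obj{y}{j}) \cong \Hom^{i+1}(\obj{y}{}, \obj{y}{}((j-q)\vec{y}))$ and $\Hom^i(\obj{y}{j}, \obj{\fac}{}) \cong \Hom^{i-1}(\obj{y}{}, \obj{y}{}((q-j)\vec{y}))$. These are concentrated in even degrees, where they equal $(R/(y,\fac))_{m\vec{c}+(j-q)\vec{y}}$ and $(R/(y,\fac))_{m\vec{c}+(q-j)\vec{y}}$ respectively; because $q\vec{y} = \vec{c}$ in the chain case, the relevant degrees are $j\vec{y}$ and $(q-j)\vec{y}$ modulo $\vec{c}$, and for $1 \leq j \leq q-1$ both exponents lie in $\{1,\dots,q-1\}$. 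Applying \cref{ChainGrading}(\ref{chgr2}) with $a = 0$ shows that any such element lies in $(y^{j}, x^{p})$, respectively $(y^{q-j}, x^{p})$, and both ideals are contained in $(y,\fac)$; hence all these Hom groups vanish and $\obj{\fac}{}$ is orthogonal to each $\obj{y}{j}$.

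No genuinely new computation is needed beyond \cref{ChainGrading} and the already-established formula for $\Hom^\bullet(\obj{y}{}, \obj{y}{}(l))$; the only mild care required is the bookkeeping of the cohomological shifts $[\pm 1]$ introduced by $\obj{\fac}{} \cong \obj{y}{}(\vec{y})[-1]$, together with remembering that here the defining relation is $q\vec{y} = \vec{c}$ (rather than the loop relation $\vec{x} + q\vec{y} = \vec{c}$), which is what permits rewriting the $\vec{y}$-degrees modulo $\vec{c}$ into the range covered by \cref{ChainGrading}.
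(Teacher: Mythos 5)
Your proposal is correct and follows the paper's own route: the paper likewise deduces this lemma directly from the isomorphism $\obj{\fac}{}[1] \cong \obj{y}{}(\vec{y})$ together with the computation of $\Hom^\bullet(\obj{y}{}, \obj{y}{}(l))$ and \cref{ChainGrading}. You have merely written out the shift bookkeeping and the grading arguments that the paper leaves implicit, and these details check out.
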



\subsection{Morphisms between $\obj{y}{}$'s and $\obj{\fac}{}$ and $\obj{0}{}$'s}

For all $l$ and all $(i,j)$, $\Hom^\bullet(\obj{y}{}(l), \obj{0}{i,j})$ is given by the cohomology of the complex
\[
\cdots \rightarrow (R/(x^i, y^j))_{i\vec{x}+(j+1)\vec{y}-l} \xrightarrow{y} (R/(x^i, y^j))_{i\vec{x}+(j+2)\vec{y}-l} \xrightarrow{\fac} (R/(x^i, y^j))_{\vec{c}+i\vec{x}+(j+1)\vec{y}-l} \rightarrow \cdots.
\]
By \cref{ChainGrading}(\ref{chgr1}) we see that for all $J$
\[
\Hom^\bullet(\obj{y}{J}, \obj{0}{i,j}) = \Hom^\bullet(\obj{\fac}{}, \obj{0}{i,j}) = 0.
\]

Morphisms in the other directions are computed by the complex
\begin{center}
	\begin{tikzcd}[row sep=7ex, column sep=10ex]
		(R/(y))_{-\vec{c}-\vec{y}+l} \arrow{r}{y^{q-j}}\arrow{dr}[near start, outer sep=-2pt]{-x^{p-i}y} \ar[d, phantom, description, "\cdots\hskip7ex\bigoplus\phantom{\hskip7ex\cdots}"]
		& (R/(y))_{-(j+1)\vec{y}+l} \arrow{r}{y^j} \arrow{dr}[near start]{x^{p-i}y} \ar[d, phantom, description, "\bigoplus"]
		& (R/(y))_{-\vec{y}+l} \ar[d, phantom, description, "\phantom{\cdots\hskip7ex}\bigoplus\hskip7ex\cdots"]
		\\ (R/(y))_{-i\vec{x}-(j+1)\vec{y}+l} \arrow{ur}[near start, outer sep=-1pt]{x^i} \arrow{r}{y^j}
		& (R/(y))_{-i\vec{x}-\vec{y}+l} \arrow{ur}[near start, outer sep=-1pt]{-x^i} \arrow{r}{y^{q-j}}
		& (R/(y))_{\vec{c}-i\vec{x}-(j+1)\vec{y}+l}
	\end{tikzcd}
\end{center}
The only non-vanishing differentials are $x^i$, so we get
\begin{gather*}
\Hom^{2m}(\obj{0}{i,j}, \obj{y}{J}) \cong (R/(x^i, y))_{(m-2)\vec{c}+J\vec{y}},
\\ \Hom^{2m+1}(\obj{0}{i,j}, \obj{y}{J}) \cong (R/(x^i, y))_{(m-1)\vec{c}+(J-j)\vec{y}},
\\ \Hom^{2m}(\obj{0}{i,j}, \obj{\fac}{}) \cong (R/(x^i, y))_{(m-1)\vec{c}-j\vec{y}},
\\ \Hom^{2m+1}(\obj{0}{i,j}, \obj{\fac}{}) \cong (R/(x^i, y))_{(m-1)\vec{c}},
\end{gather*}
and hence:

\begin{lem}
	In $\mathrm{HMF}(\C^2,\Gamma_\w,\w)$ there are no morphisms from $\obj{y}{J}$ or $\obj{\fac}{}$ to $\obj{0}{i,j}$. The morphism spaces in the other direction are spanned by
\[
(1, 0) \in \Hom^3(\obj{0}{i,j}, \obj{y}{j})
\]
and
\[
(0, 1) \in \Hom^3(\obj{0}{i,j}, \obj{\fac}{})
\]
in the above complexes.
\end{lem}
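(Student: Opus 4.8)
The plan is to read the statement straight off the four cohomology computations displayed just above, together with \cref{ChainGrading}. The first assertion costs nothing: every term of the complex computing $\Hom^\bullet(\obj{y}{}(l),\obj{0}{i,j})$ is a graded piece of $R/(x^i,y^j)$ in a degree whose $\vec{x}$-coefficient modulo $\vec{c}$ is $i$, so by \cref{ChainGrading}(\ref{chgr1}) every such element is divisible by $x^i$ and hence vanishes in the quotient by $(x^i,y^j)$; taking $\obj{y}{}(l)=\obj{y}{J}$ or $\obj{\fac}{}$ gives $\Hom^\bullet(\obj{y}{J},\obj{0}{i,j})=\Hom^\bullet(\obj{\fac}{},\obj{0}{i,j})=0$, as was essentially already recorded.

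For morphisms in the other direction I would first observe that $R/(x^i,y)\cong\C[x]/(x^i)$ as an $L$-graded ring (the defining relation $\w=x^py+y^q$ holds automatically modulo $(x^i,y)$), with $x^k$ in degree $k\vec{x}$; so its nonzero graded pieces live only in degrees $0,\vec{x},\dots,(i-1)\vec{x}$, and the degree-$0$ piece is spanned by the constant $1$. Then I would go through the four displayed groups. The even groups $\Hom^{2m}(\obj{0}{i,j},\obj{y}{J})\cong(R/(x^i,y))_{(m-2)\vec{c}+J\vec{y}}$ and $\Hom^{2m}(\obj{0}{i,j},\obj{\fac}{})\cong(R/(x^i,y))_{(m-1)\vec{c}-j\vec{y}}$ have degree $b\vec{y}$ modulo $\vec{c}$ with $1\le b\le q-1$ (take $b=J$, respectively $b=q-j$), so \cref{ChainGrading}(\ref{chgr2}) forces any representative into $(y^b,x^p)\subseteq(x^i,y)$ and the groups vanish; the same applies to $\Hom^{2m+1}(\obj{0}{i,j},\obj{y}{J})$ for $J\neq j$, whose degree modulo $\vec{c}$ is $b\vec{y}$ with $b=J-j$ or $b=J-j+q$ (whichever lies between $1$ and $q-1$).

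That leaves $\Hom^{2m+1}(\obj{0}{i,j},\obj{y}{j})\cong\Hom^{2m+1}(\obj{0}{i,j},\obj{\fac}{})\cong(R/(x^i,y))_{(m-1)\vec{c}}$, of degree $0$ modulo $\vec{c}$, where \cref{ChainGrading}(\ref{chgr3}) shows only the constant term can survive; so the group is $\C$ exactly when $(m-1)\vec{c}=0$ in $L$ and $0$ otherwise, and since $\vec{c}$ has infinite order in $L$ this happens only for $m=1$, i.e.\ in degree $3$. The last step is to match these two surviving classes with the elements $(1,0)$ and $(0,1)$ of the displayed complexes: since $y=0$ in $R/(y)=\C[x]$, the only entry of either differential matrix that survives is multiplication by $x^i$, which is injective on $\C[x]$ with cokernel $\C[x]/(x^i)=R/(x^i,y)$, so the degree-$3$ cohomology is precisely this cokernel placed in one summand of the relevant column, generated by the class of $1$ --- which is $(1,0)$ in the $\obj{y}{j}$ complex and $(0,1)$ in the $\obj{\fac}{}$ complex. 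I do not expect a genuine obstacle here: all the arithmetic content is already packaged in \cref{ChainGrading} (the chain-polynomial counterpart of \cref{GradingIdeal,GradingZero}), and what remains is the clerical bookkeeping of the $L$-degrees, the shift by $\vec{c}$ from $2$-periodicity, and which summand of a direct sum each surviving class occupies.
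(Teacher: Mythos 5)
Your argument is correct and is essentially the paper's own proof: the first claim follows from \cref{ChainGrading}(\ref{chgr1}) applied termwise, the even and degree-mismatched odd groups are killed by \cref{ChainGrading}(\ref{chgr2}) after rewriting the degree as $b\vec{y} \bmod \vec{c}$ with $1\leq b\leq q-1$, and \cref{ChainGrading}(\ref{chgr3}) leaves only the constants in the remaining odd groups. Your extra remarks --- that $R/(x^i,y)\cong\C[x]/(x^i)$, that the constant survives only when $(m-1)\vec{c}=0$, i.e.\ in degree $3$, and the identification of the generators with $(1,0)$ and $(0,1)$ --- just make explicit bookkeeping that the paper leaves implicit in the displayed complexes.
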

\begin{proof}
The even degree morphisms all vanish by \cref{ChainGrading}(\ref{chgr2}), and if $j\neq J$ then the same holds for $\Hom^{2m+1}(\obj{0}{i,j}, \obj{y}{J})$ (if $J<j$ then rewrite the grading as $(q+J-j)\vec{y} \mod \vec{c}$).  \Cref{ChainGrading}(\ref{chgr3}) tells us that the only surviving odd morphisms are the constants.
\end{proof}


\subsection{Morphisms between the $\obj{0}{}$'s}

The complex computing $\Hom^\bullet(\obj{0}{i,j}, \obj{0}{I,J})$ is
\begin{center}
	\begin{tikzcd}[row sep=7ex, column sep=10ex]
		(R/(x^I, y^J))_{-\vec{c}+I\vec{x}+J\vec{y}} \arrow{r}{y^{q-j}}\arrow{dr}[near start, outer sep=-2pt]{-x^{p-i}y} \ar[d, phantom, description, "\cdots\hskip7ex\bigoplus\phantom{\hskip7ex\cdots}"]
		& (R/(x^I, y^J))_{I\vec{x}+(J-j)\vec{y}} \arrow{r}{y^j} \arrow{dr}[near start]{x^{p-i}y} \ar[d, phantom, description, "\bigoplus"]
		& (R/(x^I, y^J))_{I\vec{x}+J\vec{y}} \ar[d, phantom, description, "\phantom{\cdots\hskip7ex}\bigoplus\hskip7ex\cdots"]
		\\ (R/(x^I, y^J))_{(I-i)\vec{x}+(J-j)\vec{y}} \arrow{ur}[near start, outer sep=-1pt]{x^i} \arrow{r}{y^j}
		& (R/(x^I, y^J))_{(I-i)\vec{x}+J\vec{y}} \arrow{ur}[near start, outer sep=-1pt]{-x^i} \arrow{r}{y^{q-j}}
		& (R/(x^I, y^J))_{\vec{c}+(I-i)\vec{x}+(J-j)\vec{y}}
	\end{tikzcd}
\end{center}
The top row vanishes by \cref{ChainGrading}(\ref{chgr1}), and the same is true of the bottom row if $I < i$  (after adding $p\vec{x}+\vec{y} \mod \vec{c}$ to the gradings), so assume that $I\geq i$.  The complex becomes
\[
\cdots \rightarrow (R/(x^I, y^J))_{(I-i)\vec{x}+(J-j)\vec{y}} \xrightarrow{y^j} (R/(x^I, y^J))_{(I-i)\vec{x}+J\vec{y}} \xrightarrow{y^{q-j}} (R/(x^I, y^J))_{\vec{c}+(I-i)\vec{x}+(J-j)\vec{y}} \rightarrow \cdots
\]
and the odd position terms vanish by \cref{ChainGrading}(\ref{chgr2}), so $\Hom^{2m+1}(\obj{0}{i,j}, \obj{0}{I,J})=0$ and
\[
\Hom^{2m}(\obj{0}{i,j}, \obj{0}{I,J}) \cong (R/(x^I, y^J))_{m\vec{c}+(I-i)\vec{x}+(J-j)\vec{y}}.
\]
If $J<j$ then this is zero by \cref{ChainGrading}(\ref{chgr2}) (after adding $q\vec{y} \mod \vec{c}$ to the grading), so assume $J \geq j$.  \Cref{ChainGrading}(\ref{chgr2}) tells us that any element is divisible by $x^{I-i}y^{J-j}$ modulo $(x^I, y^J)$, and then \cref{ChainGrading}(\ref{chgr3}) tells us that only constant multiples survive.  We conclude:

\begin{lem}
For all $(i,j)$ and $(I,J)$ we have that
\begin{flalign*}
&& \Hom^\bullet (\obj{0}{i,j}, \obj{0}{I,J}) &\cong \begin{cases} \C \cdot x^{I-i}y^{J-j} & \text{if } I\geq i \text{, } J \geq j \text{ and } \bullet=0 \\ 0 & \text{otherwise.} \end{cases} & \qed
\end{flalign*}
\end{lem}


\subsection{The total endomorphism algebra of the basic objects}

It is easy to compute the compositions between the morphisms and obtain the following description of the full subcategory $\mathcal{B}$ of $\mathrm{mf}(\C^2, \Gamma_\w, \w)$ on the objects $\obj{y}{j}[3]$, $\obj{\fac}{}[3]$, $\obj{0}{i,j}$:

\begin{thm}
	\label{ChainEndAlgebra}
	The cohomology category $\mathrm{H}(\mathcal{B})$ is the path algebra of the quiver-with-relations described in \cref{figChainQuiver}.  Any $\Z$-graded $A_\infty$-structure on this algebra---and hence in particular that induced from the dg-structure on $\mathrm{mf}(\C^2, \Gamma_\w, \w)$---is formal.\hfill$\qed$	
\begin{figure}[ht]
\centering
\begin{tikzpicture}[blob/.style={circle, draw=black, fill=black, inner sep=0, minimum size=\blobsize}, arrow/.style={->, shorten >=6pt, shorten <=6pt}]
\def\blobsize{1.2mm}

\draw (0, 0) node[blob]{};
\draw (1, 0) node[blob]{};
\draw (2, 0) node[blob]{};
\draw (3.25, 0) node{\ $\cdots$};
\draw (4.5, 0) node[blob]{};
\draw (6, 0) node[blob]{};

\draw[arrow] (0, 0) -- (1, 0);
\draw[arrow] (1, 0) -- (2, 0);
\draw[arrow] (2, 0) -- (3, 0);
\draw[arrow] (3.5, 0) -- (4.5, 0);
\draw[arrow] (4.5, 0) -- (6, 0);

\draw[arrow] (0, 0) -- (0, 1);
\draw[arrow] (1, 0) -- (1, 1);
\draw[arrow] (2, 0) -- (2, 1);
\draw[arrow] (4.5, 0) -- (4.5, 1);

\begin{scope}[yshift=1cm]
\draw (0, 0) node[blob]{};
\draw (1, 0) node[blob]{};
\draw (2, 0) node[blob]{};
\draw (3.25, 0) node{\ $\cdots$};
\draw (4.5, 0) node[blob]{};
\draw (6, 0) node[blob]{};

\draw[arrow] (0, 0) -- (1, 0);
\draw[arrow] (1, 0) -- (2, 0);
\draw[arrow] (2, 0) -- (3, 0);
\draw[arrow] (3.5, 0) -- (4.5, 0);
\draw[arrow] (4.5, 0) -- (6, 0);

\draw[arrow] (0, 0) -- (0, 1);
\draw[arrow] (1, 0) -- (1, 1);
\draw[arrow] (2, 0) -- (2, 1);
\draw[arrow] (4.5, 0) -- (4.5, 1);
\end{scope}

\begin{scope}[yshift=3.5cm]
\draw (0, 0) node[blob]{};
\draw (1, 0) node[blob]{};
\draw (2, 0) node[blob]{};
\draw (3.25, 0) node{\ $\cdots$};
\draw (4.5, 0) node[blob]{};
\draw (6, 0) node[blob]{};

\draw[arrow] (0, 0) -- (1, 0);
\draw[arrow] (1, 0) -- (2, 0);
\draw[arrow] (2, 0) -- (3, 0);
\draw[arrow] (3.5, 0) -- (4.5, 0);
\draw[arrow] (4.5, 0) -- (6, 0);
\end{scope}

\begin{scope}[yshift=5cm]
\draw (6, 0) node[blob]{};
\end{scope}

\draw (0, 2.37) node{$\vdots$};
\draw (1, 2.37) node{$\vdots$};
\draw (2, 2.37) node{$\vdots$};
\draw (4.5, 2.37) node{$\vdots$};
\draw (6, 2.37) node{$\vdots$};

\begin{scope}[yshift=2.5cm]
\draw[arrow] (0, 0) -- (0, 1);
\draw[arrow] (1, 0) -- (1, 1);
\draw[arrow] (2, 0) -- (2, 1);
\draw[arrow] (4.5, 0) -- (4.5, 1);
\end{scope}

\draw (3.25, 2.37) node{\ $\iddots$};
\draw[arrow] (4.5, 3.5) -- (6, 5);

\draw[line width=1mm, opacity=0.2] (-0.5, -0.5) rectangle (5, 4);
\draw[line width=1mm, opacity=0.2] (5.5, -0.5) rectangle (6.5, 4);

\draw (-1.2, 2) node{$\obj{0}{i,j}$};
\draw (7.25, 2) node{$\obj{y}{j}[3]$};
\draw (6.6, 5) node{$\obj{\fac}{}[3]$};

\draw[->, dashed, rounded corners] (4.72, 0.15) -- (4.72, 0.78) -- (5.85, 0.78);
\draw[->, dashed, rounded corners] (4.72, 2.65) -- (4.72, 3.28) -- (5.85, 3.28);

\draw (10.6, 2.25) node{\parbox{115pt}{\small \ \textbf{Relations:} \begin{enumerate}[(i)]\item Squares commute\item Dashed compositions vanish\end{enumerate}}};

\end{tikzpicture}
\caption{The quiver describing the category $\mathcal{B}$ for chain polynomials.\label{figChainQuiver}}
\end{figure}
	
\end{thm}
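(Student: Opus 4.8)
The plan is to imitate the proof of \cref{LoopEndAlgebra} almost verbatim, exploiting the simplification that there is now no third family of objects supported away from the origin---no analogue of the $\obj{x}{i}$---only the two families $\obj{y}{j}$ and $\obj{\fac}{}$. By the computations of the preceding subsections (all of which rest on \cref{ChainGrading}), in $\mathrm{HMF}(\C^2,\Gamma_\w,\w)$ the objects $\obj{y}{j}$, $\obj{\fac}{}$ and $\obj{0}{i,j}$ are exceptional; the $\obj{y}{j}$ and $\obj{\fac}{}$ are mutually orthogonal; there are no morphisms from any $\obj{y}{J}$ or from $\obj{\fac}{}$ into any $\obj{0}{i,j}$; the only morphisms between distinct $\obj{0}{}$'s are the multiplications $x^{I-i}y^{J-j}\in\Hom^0(\obj{0}{i,j},\obj{0}{I,J})$ with $I\geq i$ and $J\geq j$; and the only remaining morphisms are $(1,0)\in\Hom^3(\obj{0}{i,j},\obj{y}{j})$ and $(0,1)\in\Hom^3(\obj{0}{i,j},\obj{\fac}{})$. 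After applying the shifts $[3]$ to the $\obj{y}{j}$ and to $\obj{\fac}{}$, these last two classes move into degree $0$; hence $\mathrm{H}(\mathcal{B})$ is concentrated in degree $0$, every morphism space between distinct objects is at most one-dimensional, and morphisms run only ``one way'' between distinct objects, so in particular the objects of $\mathcal{B}$ form an exceptional collection.

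The first step is to read off the quiver. Its arrows are the elementary multiplications $\obj{0}{i,j}\to\obj{0}{i+1,j}$ and $\obj{0}{i,j}\to\obj{0}{i,j+1}$, the maps $\obj{0}{p-1,j}\to\obj{y}{j}[3]$, and the map $\obj{0}{p-1,q-1}\to\obj{\fac}{}[3]$; every other morphism is forced to be a composite of these, since for instance $\obj{0}{i,j}\to\obj{y}{j}$ must factor through the chain $\obj{0}{i,j}\to\cdots\to\obj{0}{p-1,j}$ (there being no morphisms into $\obj{y}{j}$ from the other $\obj{y}{J}$ or from $\obj{\fac}{}$), and likewise $\obj{0}{i,j}\to\obj{\fac}{}$ factors through $\obj{0}{p-1,q-1}$. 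This reproduces \cref{figChainQuiver}. The second step is to verify the relations. The squares of $\obj{0}{}$'s commute because both composites around such a square equal multiplication by $xy$, which spans $\Hom^0(\obj{0}{i,j},\obj{0}{i+1,j+1})$; more generally, exactly as in the proof of \cref{LoopEndAlgebra}, one writes down the map of six-term matrix factorisations induced by $x^{I-i}y^{J-j}$---again a diagonal matrix of monomials---and checks that post-composing it with the degree-$3$ generator $(1,0)$ or $(0,1)$ out of $\obj{0}{I,J}$ returns the degree-$3$ generator out of $\obj{0}{i,j}$. The vanishing of the dashed compositions is then automatic: the composite $\obj{0}{p-1,j}\xrightarrow{y}\obj{0}{p-1,j+1}\to\obj{y}{j+1}[3]$ lies in $\Hom^\bullet(\obj{0}{p-1,j},\obj{y}{j+1}[3])$, which vanishes because a morphism from $\obj{0}{i,j}$ to $\obj{y}{J}$ is nonzero only when $J=j$. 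Together these identify $\mathrm{H}(\mathcal{B})$ with the path algebra described in \cref{figChainQuiver}.

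Finally, formality. Since $\mathrm{H}(\mathcal{B})$ is concentrated in degree $0$, any minimal $A_\infty$-structure compatible with it must have $\mu^1=0$ and $\mu^d=0$ for all $d\geq 3$, purely for degree reasons: $\mu^d$ has degree $2-d$ and there are no morphisms outside degree $0$. So the only such structure is the formal one $(\mathrm{H}(\mathcal{B}),\mu^2)$, and this in particular covers the $A_\infty$-structure induced on $\mathcal{B}$ by the dg-category $\mathrm{mf}(\C^2,\Gamma_\w,\w)$. I do not anticipate any genuine obstacle here: the whole argument is a direct simplification of the loop case, and the only mildly delicate point---the six-term matrix-factorisation bookkeeping behind the composition check---is entirely routine.
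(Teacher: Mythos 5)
Your proposal is correct and follows essentially the same route the paper intends: the theorem is stated with the proof left implicit precisely because it is the chain-case analogue of \cref{LoopEndAlgebra}, resting on the preceding morphism computations, the composition check via the explicit diagonal chain maps $\operatorname{diag}(1,x^{I-i}y^{J-j})$ and $\operatorname{diag}(y^{J-j},x^{I-i})$, the vanishing of the dashed composites because $\Hom^\bullet(\obj{0}{i,j},\obj{y}{J})=0$ for $J\neq j$, and intrinsic formality of a $\Z$-graded algebra concentrated in degree $0$. Your one ``mildly delicate point'' does indeed go through: the degree-$3$ generator towards $\obj{\fac}{}$ is supported on the summand on which the lift of $x^{I-i}y^{J-j}$ acts by $1$, so generators map to generators exactly as in the loop case.
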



\subsection{Generation}

The final thing we need to check is:

\begin{lem}
The objects in $\mathcal{B}$ split-generate $\mathrm{HMF}(\C^2, \Gamma_\w, \w)$.
\end{lem}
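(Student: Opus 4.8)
The plan is to run the argument of the proof of \cref{BGenerates} essentially verbatim, making the few changes forced by the different shape of the B-side data.  Set $V = \{\obj{y}{j}, \obj{\fac}{}, \obj{0}{i,j}\}$ with $1\le i\le p-1$ and $1\le j\le q-1$.  By \cref{PVGeneration} and the equivalence \eqref{HMFsingEquivalence}, it suffices to show that the triangulated subcategory $\langle V\rangle \subset D^b_\mathrm{sing}(\gr R)$ contains the stabilisation of every $L$-grading shift $R(l)/(x,y)$; and since $[2]$ is equivalent to $(\vec{c})$ we may let $l$ run over the representatives $\{a\vec{x} : 0\le a < pq\}$ of $L/\Z\vec{c} \cong \Z/pq$.

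First I would build the ``interior'' objects.  Exactly as in \cref{BGenerates}, iterated cokernels of multiplication maps between $L$-grading shifts of the $\obj{0}{i,j}$ (with the zero module appearing at the boundary of the range) produce $R(a\vec{x}+b\vec{y})/(x,y)$ for all $1\le a\le p-1$ and $1\le b\le q-1$, together with the intermediate modules $R(l)/(x,y^{j})$ and $R(l)/(x^{i},y)$ that arise en route.  This covers the $(p-1)(q-1)$ classes $a\vec{x}+b\vec{y} \equiv (a-pb)\vec{x}\bmod\vec{c}$, so the classes still to be realised are $a\vec{x}$ for $1\le a\le p-1$, the residue field $R/(x,y)$ itself, and $pk\vec{x}\equiv -k\vec{y}$ for $1\le k\le q-1$.

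The heart of the matter is producing these remaining $p+q-1$ twists, and here the main structural difference from the loop case --- and the reason the paper calls the chain case a simplification --- is that there is no analogue of the objects $\obj{x}{i}$: since $x$ is a non-zero-divisor in $R$, each $R(l)/(x^{i})$ has a two-term free resolution $R(l-i\vec{x})\xrightarrow{x^{i}}R(l)$ and is therefore perfect, i.e.\ zero in $D^b_\mathrm{sing}(\gr R)$.  Substituting this into the short exact sequences relating $R(l)/(x^{i},y^{j})$, $R(l)/(x^{i})$ and $R(l)/(x^{i},y^{q-j})$ gives ``reflection'' isomorphisms $R(l)/(x^{i},y^{j}) \cong R(l-j\vec{y})/(x^{i},y^{q-j})[1]$ in $D^b_\mathrm{sing}(\gr R)$; in particular each $R(l)/(x,y^{q-1})$ is, up to a shift, an interior module $R(l+\vec{y})/(x,y)$, and descending from $y^{q-1}$ back down to $y$ through the cones of the injections $y\colon R(l-\vec{y})/(x,y^{j-1})\hookrightarrow R(l)/(x,y^{j})$ puts $R(a\vec{x})/(x,y)$ in $\langle V\rangle$ for $1\le a\le p-1$.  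For the remaining $\vec{y}$-direction twists (and for $R/(x,y)$) I would use the identity $\obj{\fac}{}[1]\cong\obj{y}{}(\vec{y})$, in its twisted form $R(l)/(\fac)[1]\cong R(l+\vec{y})/(y)$; the congruences $\fac\equiv x^{p}\bmod y$ and $\fac\equiv y^{q-1}\bmod x$, so that $(y,\fac)=(y,x^{p})$ and $(x,\fac)=(x,y^{q-1})$; and the fact that $x$ is a non-zero-divisor on $R/(y)=\C[x]$ and on $R/(\fac)$, which gives short exact sequences $0\to R(l-i\vec{x})/(y)\xrightarrow{x^{i}}R(l)/(y)\to R(l)/(x^{i},y)\to 0$ and its analogue for $R/(\fac)$.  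Since $\obj{y}{j}$ and $\obj{\fac}{}$ already supply $R(l)/(y)$ for all $l$ in the subgroup $\Z\vec{y}+\Z\vec{c}$ of $L$ (the classes that are multiples of $p$), and the quotients $R(l)/(x^{p-1},y)$ that occur above are assembled from copies of $R(\cdot)/(x,y)$ already known to lie in $\langle V\rangle$, one can then move between the residue classes modulo $p$ and obtain $R(-\vec{x})/(y)$, hence $R/(x,y)$, and similarly each $R(-k\vec{y})/(x,y)$.

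The only genuine obstacle I anticipate is the bookkeeping in this last step: every individual exact sequence is an elementary divisibility statement proved just like \cref{ChainGrading}, but verifying that the recursion visits all $pq$ classes of $L/\Z\vec{c}$, and in the correct gradings, requires care --- the twists handled in the loop case by $\obj{x}{p-1}$ must here be reached more circuitously, through the perfect objects $R(\cdot)/(x^{i})$ and the single curve object $\obj{\fac}{}$.  A cleaner alternative worth attempting is to invoke the chain analogue of the Orlov semiorthogonal decomposition of \cref{rmkOrlov}, $D^b_\mathrm{sing}(\gr R)=\langle\mathcal{C},D^b(Y)\rangle$, and to check separately that $\{\obj{y}{j},\obj{\fac}{}\}$ split-generates the $D^b(Y)$ factor (a disjoint union of stacky points) while $\langle\obj{0}{i,j}\rangle$ split-generates $\mathcal{C}$; in either approach the split-generation of $\mathrm{HMF}(\C^2,\Gamma_\w,\w)$ follows.
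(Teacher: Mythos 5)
Your overall route is the paper's: reduce via \cref{PVGeneration} and the equivalence \eqref{HMFsingEquivalence} to producing every shift $R(l)/(x,y)$ with $l$ running over $L/\Z\vec{c}\cong\Z/pq$, build the ``interior'' shifts by cones out of the $\obj{0}{i,j}$, use the $\obj{y}{j}$ together with $\obj{\fac}{}\cong\obj{y}{}(\vec{y})[-1]$ to supply $R(l)/(y)$ for all $l\in\Z\vec{y}+\Z\vec{c}$, and then reach the remaining $p+q-1$ classes by exact sequences involving powers of $x$. The one genuinely different ingredient is your treatment of the missing block of skyscraper twists: where the paper uses the extension $0\to R/(\w)\xrightarrow{x^i}R(i\vec{x})/(\w)\to R(i\vec{x})/(x^i,y^{q-1})\to 0$ (outer terms $\obj{\fac}{}$ and $\obj{0}{i,q-1}[-2]$), you exploit that $x$ is a non-zero-divisor in $R$, so $R(l)/(x^i)$ is perfect, giving the reflection $R(l)/(x^i,y^j)\cong R(l-j\vec{y})/(x^i,y^{q-j})[1]$ in $D^b_\mathrm{sing}(\gr R)$, and then peel off known composition factors. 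That isomorphism is correct (it is just the $2$-periodicity of $\obj{0}{i,j}$ seen module-theoretically) and it does the job; only the bookkeeping of which block is ``interior'' is off by one: the cone construction on the $\obj{0}{i,j}$ naturally yields $R(i\vec{x}+(j+1)\vec{y})/(x,y)$, i.e.\ the classes $a\vec{x}+b\vec{y}$ with $b\in\{2,\dots,q\}$ (so $b\equiv 0$ occurs but $b=1$ does not), and the block your reflection must recover is $a\vec{x}+\vec{y}$ rather than $a\vec{x}$; applied with $l=a\vec{x}+\vec{y}$ it does exactly that, so this is harmless.

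The genuine soft spot is the last step, which you yourself flag. As written, ``obtain $R(-\vec{x})/(y)$, hence $R/(x,y)$'' is circular: $-\vec{x}\notin\Z\vec{y}+\Z\vec{c}$, and the only sequence of your type producing $R(-\vec{x})/(y)$ from a known $R(l)/(y)$ with $i=1$ has quotient $R/(x,y)$ itself, while the $x^{p-1}$-variants you mention leave an as-yet-unknown multiple-of-$p$ class among the factors of $R(l)/(x^{p-1},y)$. The repair stays inside your toolkit and is what the paper does: take the power $x^{p}$, so that both ends of $0\to R((j+1)\vec{y}-\vec{c})/(y)\xrightarrow{x^{p}}R(j\vec{y})/(y)\to R(j\vec{y})/(x^{p},y)\to 0$ have twists in $\Z\vec{y}+\Z\vec{c}$; the quotient's factors $R(j\vec{y}-k\vec{x})/(x,y)$ are interior for $k=1,\dots,p-1$, so peeling yields the multiples-of-$p$ classes $R(j\vec{y})/(x,y)$ for $1\le j\le q-1$; the final class $0$ then requires the longer cone of $x^{pq}\colon R(-(q-1)\vec{c})/(y)\to R/(y)$, whose result $R/(x^{pq},y)$ has all composition factors known except $R/(x,y)$. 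Finally, your proposed shortcut through the Orlov/Hirano--Ouchi semiorthogonal decomposition is not obviously easier: as \cref{rmkOrlov} notes, $\langle\obj{0}{i,j}\rangle$ only \emph{replaces} Orlov's category $\mathcal{C}$ of grading shifts of the structure sheaf of the origin, it is not equal to it, so proving that it split-generates that factor is essentially the same work as the module-by-module argument above.
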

\begin{proof}
Let $V=\{\obj{y}{j}, \obj{\fac}{},\obj{0}{i,j}\}$. As in the loop case, it suffices to prove that the category $\langle V\rangle$ contains all of the $L/\Z\vec{c}$-grading shifts of $R/(x,y)$.  Again following the loop case, we easily have that $R(i\vec{x}+(j+1)\vec{y})/(x, y)$ lies in $\langle V\rangle$ for any $1\leq i \leq p-1$ and $1\leq j\leq q-1$.

By combining $\obj{\fac}{} \cong \obj{y}{}(\vec{y})[-1]$, the $\obj{y}{j}$, and all of their $[\cdot]$-shifts, we see that $\langle V \rangle$ contains $R(l)/(y)$ for all $l$ in $\Z\vec{y}+\Z\vec{c}$ (the $\Z\vec{c}$ is redundant here but we include it for clarity).  Consequently, for each integer $j$ we have that $\langle V \rangle$ contains the cokernel of
\[
R((j+1)\vec{y}-\vec{c})/(y) \xrightarrow{x^p} R(j\vec{y})/(y),
\]
which is $R(j\vec{y})/(x^p, y)$.  Peeling off one-dimensional pieces $R(i\vec{x}+j\vec{y})/(x,y)$ for $i=-1, \dots, -(p-1)$, by taking cones, we're left with $R(j\vec{y})/(x,y)$.  If $j$ lies in $1, \dots, q-1$ then (after applying the trivial operation $(p\vec{x}+\vec{y})[-2]$) each of these pieces is in $\langle V \rangle$ by the previous paragraph.  The conclusion is that $R(j\vec{y})/(x,y)$ lies in $\langle V \rangle$ for all such $j$.

We have therefore constructed $R(a\vec{x}+b\vec{y})/(x,y)$ for $0 \leq a \leq p-1$ and $0 \leq b \leq q-1$ except for $(a,b)=(0,0)$ and $(a,b)=(1,1), \dots, (1, p-1)$.  To obtain the latter, consider the extension
\[
0\rightarrow R/(w)\xrightarrow{x^i} R(i\vec{x})/(w)\rightarrow R(i\vec{x})/(x^i,y^{q-1})\rightarrow 0
\]
for $i=1, \dots, p-1$.  The outer terms lie in $\langle V \rangle$ (they are $\obj{w}{}$ and $\obj{0}{i, q-1}[-2]$), so we deduce that $R(i\vec{x})/(w)$ also lies in $\langle V\rangle$.  Again using the fact that $\obj{w}{}\cong\obj{y}{}(\vec{y})[-1]$, we get that $R(i\vec{x}+\vec{y})/(y)$ is in $\langle V \rangle$ for $i=0, \dots, p-1$ (the $i=0$ case comes from $\obj{w}{}[1]$ itself, not from the preceding argument).  From these we see that
\[
R(i\vec{x}+\vec{y})/(x,y)\cong\Cone\big(R((i-1)\vec{x}+y)/(y)\xrightarrow{x}R(i\vec{x}+\vec{y})/(y)\big)
\]
lies in $\langle V\rangle$ for $i=1, \dots, p-1$.

All that is left to show now is that we have $R/(x,y)$ in $\langle V\rangle$, and this closely follows the loop case: we can realise this module as the cokernel of
\[
R(-\vec{x})/(x^{pq-1},y)\xrightarrow{x}R/(x^{pq},y),
\]
and the domain can be built of the shifts of $R/(x,y)$ that we already have.  The codomain, meanwhile, is given by
\[
\Cone\big(R(-(q-1)\vec{c})/(y)\xrightarrow{x^{pq}}R/(y)\big).\qedhere
\]
\end{proof}

\begin{rmk}
\label{rmkGenVsSplitChain}
The $R(l)/(x,y)$ still only \emph{split-}generate the category (which we saw for loop polynomials in \cref{rmkGenVsSplit}), since the above proof shows that they are annihilated by the homomorphism
\[
K_0(\mathrm{mf}(\C^2, \Gamma_\w, \w)) \rightarrow \Z/2
\]
which sends the basis elements $\obj{0}{i,j}$ to $0$ but $\obj{y}{j}$ and $\obj{\fac}{}$ to $1$.
\end{rmk}

As in the loop case, we deduce:

\begin{thm}[\cref{Thm2}, chain polynomial case]
	The object
	\[
	\mathcal{E} \coloneqq \bigg( \bigoplus_{\substack{i=1, \dots, p-1\\j=1, \dots, q-1}} \obj{0}{i,j} \bigg) \oplus \bigg( \bigoplus_{j=1}^{q-1} \obj{y}{j}[3] \bigg) \oplus \obj{w}{}[3]
	\]
	is a tilting object for $\mathrm{mf}(\C^2, \Gamma_\w, \w)$.\hfill$\qed$
\end{thm}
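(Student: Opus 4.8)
The plan is to check the two defining properties of a tilting object directly, in exactly the same way as in the loop case. First, I would observe that $\End^\bullet(\mathcal{E})$ splits as the direct sum, over all ordered pairs of indecomposable summands of $\mathcal{E}$, of the morphism spaces computed in the preceding subsections and assembled in \cref{ChainEndAlgebra}. Since the path-algebra-with-relations appearing there lives entirely in cohomological degree $0$, this immediately gives $\End^i(\mathcal{E}) = 0$ for all $i \neq 0$, which is the first property.

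For the second property, I would take an object $X$ of $\mathrm{mf}(\C^2, \Gamma_\w, \w)$ with $\hom^\bullet(\mathcal{E}, X) \simeq 0$ and argue that $X \cong 0$. Since the summands of $\mathcal{E}$ are precisely the shifts $\obj{y}{j}[3]$, $\obj{\fac}{}[3]$, $\obj{0}{i,j}$ of the objects of $\mathcal{B}$, the object $X$ receives no nonzero morphisms in $\mathrm{HMF}(\C^2, \Gamma_\w, \w)$ from any object of $\mathcal{B}$, hence none from the triangulated subcategory $\langle V \rangle$ they generate, where $V = \{\obj{y}{j}, \obj{\fac}{}, \obj{0}{i,j}\}$, and hence also none from the idempotent completion $\langle V \rangle^\pi$. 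By the generation lemma established just above, $V$ split-generates, so that $\langle V \rangle^\pi \simeq \mathrm{HMF}(\C^2, \Gamma_\w, \w)^\pi$; combining this with the fact that $\Tw\mathcal{B}$ is already idempotent complete — its objects form a full exceptional collection, so \cite[Remark 5.14]{SeidelBook} applies — shows that $X$ is right-orthogonal to the whole category, forcing $X \cong 0$.

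In short, both ingredients have already been proved, so no genuinely new argument is required; the only point needing a little care is the passage from the split-generation statement of the preceding lemma to actual generation of the category, which (just as in \cref{rmkGenVsSplit} and the proof of the loop-case tilting theorem) must proceed via the idempotent completeness of $\Tw\mathcal{B}$ rather than by trying to generate the category on the nose, since by \cref{rmkGenVsSplitChain} the modules $R(l)/(x,y)$ on their own fail to generate. I do not anticipate any real obstacle beyond bookkeeping.
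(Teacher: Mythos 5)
Your proposal is correct and takes essentially the same route as the paper, which deduces the chain case verbatim from the loop-case argument: the vanishing of $\End^i(\mathcal{E})$ for $i\neq 0$ comes from \cref{ChainEndAlgebra}, and the co-generation property from the split-generation lemma for $V=\{\obj{y}{j}, \obj{\fac}{},\obj{0}{i,j}\}$. The only superfluous ingredient is your appeal to idempotent completeness of $\Tw\mathcal{B}$: split-generation alone already shows that $X$ receives no non-zero morphisms from $\langle V\rangle^\pi \simeq \mathrm{HMF}(\C^2,\Gamma_\w,\w)^\pi$, in particular from $X$ itself, so $\mathrm{id}_X=0$ and $X\cong 0$.
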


This was proved by Futaki--Ueda \cite[Section 4]{FutakiUedaD} in the case $q=2$.


\section{A-model for chain polynomials}
\label{AModelChain}

\subsection{The setup}

Just as for the B-model, our basic strategy for understanding the A-model will closely follow the loop polynomial case.  This time the Berglund--H\"ubsch transpose is $\wt = \xt^p + \xt\yt^q$, and our starting point is once more the resonant Morsification $\wt_\eps = \wt - \eps \xt\yt$ for small positive real $\eps$.  We denote $\xt^{p-1} + \yt^q$ by $\fact$.  The critical points now fall into three types:
\begin{enumerate}[(i)]
\item $\xt=0$, $\yt^{q-1}=\eps$
\item $\xt=\yt=0$
\item\label{crit3Chain} $\yt^{q-1} = \frac{\eps}{q}$, $\xt^{p-1} = \frac{(q-1)\eps \yt}{pq}$.
\end{enumerate}
The first two types have critical value zero, whilst the third type has critical value
\[
-\xt\yt\eps(p-1)(q-1)/pq,
\]
on the ray through $-\xt\yt$.  These critical points are indeed all Morse.

There is a unique positive real solution to (\ref{crit3Chain}) which we denote by $(\xt^+_\mathrm{crit}, \yt^+_\mathrm{crit})$, and again we call the corresponding (negative real) critical value $c_\mathrm{crit}$.  Still letting $\zeta$ and $\eta$ denote the roots of unity
\[
\zeta = e^{2\pi i/(p-1)} \quad \text{and} \quad \eta = e^{2\pi i/(q-1)},
\]
but now also letting $\mu = e^{2\pi i/(p-1)(q-1)}$, the type (\ref{crit3Chain}) critical points are
\[
\{(\zeta^l\mu^m\xt^+_\mathrm{crit}, \eta^m\yt^+_\mathrm{crit}) : 0 \leq l \leq p-2 \text{, } 0 \leq m \leq q-2\},
\]
with critical values $\mu^{(q-1)l+pm}c_\mathrm{crit}$.

Taking regular fibre $\Sigma = \wt_\eps^{-1}(-\delta)$ with $0 < \delta \ll \eps$, we again choose the straight line segment from $-\delta$ to $0$ as the vanishing path for the critical points over zero, and denote the corresponding vanishing cycles by $\vc{\xt\fact}{m}$ and $\vc{\xt\yt}{}$.  We also choose the same preliminary vanishing paths $\gamma_{l,m}$ as before, but with $\theta_{l,m}$ now given by
\[
\theta_{l,m} = 2\pi \left(\frac{l}{p-1}+\frac{pm}{(p-1)(q-1)}\right),
\]
and write $\vcpr{0}{l,m}$ for the preliminary vanishing cycles.

\subsection{The vanishing cycles}

The central fibre $\wt_\eps^{-1}(0)$, shown in \cref{figZeroFibreChain}, now has only two components, namely the line $\{\xt=0\}$ and the smooth curve $\{\fact = \eps \yt\}$.  The $q$ nodes are smoothed to thin necks in $\Sigma$, whose complement we again refer to as $\Sigma'$, and we trivialise the fibration $\wt_\eps$ on this complement over the disc of radius $\delta$.
\begin{figure}[ht]
\centering
\begin{tikzpicture}[blob/.style={cross out, draw=black, fill=black, inner sep=0, minimum size=\blobsize, line width=0.5mm}, noblob/.style={inner sep=0, minimum size=0}]
\def\blobsize{1.5mm}

\begin{scope}[xshift=-0.2cm]
\draw[fill=white, opacity=0.8, name path global=plane1] (-1.5, -2) -- (2, -0.5) -- (2, 5.5) -- (-1.5, 4) -- cycle;
\end{scope}

\draw[fill=white, fill opacity=0.8] plot [smooth, tension=0.7] coordinates {(3.5, 4.5) (3, 4.2) (1, 3.9) (0, 3.5) (1, 3) (0.2, 2.5) (1, 2) (0, 1.5) (1.25, 0.75) (0, 0) (1, -0.2) (2, 0) (3.5, 0.5) (6.5, 1)};

\draw plot [smooth, tension=1] coordinates {(4.5, 4.5) (4, 3.8) (5.5, 4.5)};
\draw plot [smooth, tension=1] coordinates {(6.4, 4.5) (4.5, 2.8) (6.5, 3.2)};
\draw plot [smooth, tension=1] coordinates {(6.5, 2.5) (5, 1.6) (6.5, 1.7)};

\begin{scope}[xshift=3.4cm, yshift=2.2cm, rotate=-20]
\draw (0, 0) to [bend left=30] (1, 0);
\draw (-0.1, 0.075) to [bend right=10] (0, 0) to [bend right=20] (1, 0) to [bend right=10] (1.1, 0.075);
\end{scope}

\begin{scope}[xshift=1.6cm, yshift=3.5cm, rotate=-20]
\draw[name path global=curve1] (0, 0) to [bend left=30] (1, 0);
\draw[name path global=curve2] (-0.1, 0.075) to [bend right=10] (0, 0) to [bend right=20] (1, 0) to [bend right=10] (1.1, 0.075);

\draw[name intersections={of=plane1 and curve1}] (intersection-1) node[noblob](pt1){};
\draw[name intersections={of=plane1 and curve2}] (intersection-1) node[noblob](pt2){};
\draw (pt1) -- (pt2);
\end{scope}

\begin{scope}[xshift=2cm, yshift=1.2cm, rotate=-20]
\draw (0, 0) to [bend left=30] (1, 0);
\draw (-0.1, 0.075) to [bend right=10] (0, 0) to [bend right=20] (1, 0) to [bend right=10] (1.1, 0.075);
\end{scope}

\draw (-0.3, 0.7) node[anchor=east]{$\xt=0$};
\draw (2.1, 2.2) node{$\fact=\eps\yt$};

\draw (0, 1.5) node[blob]{};
\draw (0.2, 2.5) node[blob]{};
\draw (0, 3.5) node[blob]{};

\draw (0, 0) node[blob]{};

\draw [decorate,decoration={brace,amplitude=7pt}] (-0.05, 1.3) -- (-0.05, 3.7);
\draw (-0.28, 2.45) node[anchor=east]{\small$q-1$};

\end{tikzpicture}
\caption{The fibre $\wt_\eps^{-1}(0)$ for chain polynomials.\label{figZeroFibreChain}}
\end{figure}
This time we compute
\begin{gather*}
\# \text{ punctures of } \Sigma = \gcd(p-1, q)+1
\\ g(\Sigma) = \frac{1}{2}\left(pq-p+1-\gcd(p-1,q)\right).
\end{gather*}

Just as in the loop case, the preliminary cycle $\vcpr{0}{0,0}$ is given by the loop in the positive quadrant of the real part of $\Sigma$.  On $\Sigma'$ the other preliminary cycles are given by the action of $(\zeta^l\mu^m, \eta^m)$. In particular, they are pairwise disjoint on the $\{\fact = \eps \yt\}$ part of $\Sigma'$ (since $\xt$ and $\yt$ are both nowhere-zero here).  The only intersections on the $\{\xt=0\}$ part occur when the $m$-values coincide, and in this case the cycles overlap (at least in the limit $\delta \downarrow 0$) exactly as before.

On the $\vc{\xt\yt}{}$-neck region, the argument of the $\yt$-component of $\vcpr{0}{l,m}$ interpolates from
\[
-2\pi \left( \frac{l}{p-1} + \frac{m}{(p-1)(q-1)} \right) \quad \text{to} \quad \frac{2\pi m}{q-1}
\]
as $|\yt|$ increases, whilst on the $\vc{\xt\fact}{m}$-neck the argument of $\yt - \eta^m \yt^+_\mathrm{crit}$ interpolates back the other way as its argument decreases.  This is completely analogous to the picture in \cref{figlequalsL}.

We modify the preliminary paths, and correspondingly perturb the fibration, exactly as in \cref{ModifyingPaths}.  The chain polynomial version of \cref{lemCyclesDisjoint} is:

\begin{lem}
Suppose $\theta_{l,m} > \theta_{L,M} + 2\pi$, and let $z = \gamma'_{l,m} \cap \gamma'_{L,M}$.  Inside $\Sigma_z = \wt_\eps^{-1}(z)$ we have vanishing cycles $V_1$ and $V_2$ corresponding to the critical points $(\zeta^l\mu^m\xt^+_\mathrm{crit}, \eta^m\yt^+_\mathrm{crit})$ and $(\zeta^L\mu^M\xt^+_\mathrm{crit}, \eta^M\yt^+_\mathrm{crit})$ and the truncations of $\gamma'_{l,m}$ and $\gamma'_{L,M}$.  These cycles are disjoint.
\end{lem}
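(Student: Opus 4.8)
The plan is to mimic the proof of \cref{lemCyclesDisjoint} essentially line by line. First I would note that the symplectomorphism $f_{L,M}\colon (\xt,\yt)\mapsto(\zeta^L\mu^M\xt,\eta^M\yt)$ of $\C^2$ intertwines $\wt_\eps$ with multiplication by $\mu^{(q-1)L+pM}$ on the base, so applying $f_{L,M}^{-1}$ reduces us to the case $L=M=0$. Before that I would check that the hypothesis $\theta_{l,m}>\theta_{L,M}+2\pi$ forces, after this reduction, that $0\leq l\leq p-2$ and $1\leq m\leq q-2$ (and of course $\theta_{l,m}>2\pi$): using $\theta_{l,m}=2\pi\big(\tfrac{l}{p-1}+\tfrac{pm}{(p-1)(q-1)}\big)$, one checks that if $m\leq 0$ or $l\leq -1$ then $l(q-1)+pm\leq (p-1)(q-1)$, contradicting $\theta_{l,m}>2\pi$. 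Exactly as in the loop case, after the reduction the intersection point $z$ is approximately $-\delta$, the cycle $V_2$ is approximately $\vcpr{0}{0,0}$, and $V_1$ is built just like $\vcpr{0}{l,m}$ except that the parallel transport around the circle of radius $\delta$ is performed only from $\theta_{l,m}$ down to $2\pi$ rather than all the way to $0$; since the $V_i$ are compact it is enough to establish disjointness for these approximations.

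Because $m\geq 1$, the cycles $V_1$ and $V_2=\vcpr{0}{0,0}$ are disjoint on $\Sigma'$ — their segments in the $\{\xt=0\}$ part lie over different $(q-1)$st roots of $\eps$, and on the $\{\fact=\eps\yt\}$ part they are distinct lifts of a single arc — and the only neck region entered by both is the one corresponding to $\vc{\xt\yt}{}$, since the other neck met by $V_1$ is $\vc{\xt\fact}{m}$ with $m\neq 0$. So everything reduces to showing $V_1\cap V_2=\emptyset$ inside the $\vc{\xt\yt}{}$-neck.

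There I would argue exactly as in the loop case: near the origin $\wt_\eps\approx -\eps\xt\yt$, so the parallel transport is governed by \eqref{LocalParTrans}; coordinatising the neck by $\yt$, the cycle $V_2$ has $\yt$-component of constant argument $0$, while the $\yt$-argument of $V_1$ is the function $\psi(s)$ obtained by solving \eqref{LocalParTrans} with initial data (at transport parameter $\theta_{l,m}$) given by the arguments $2\pi\big(\tfrac{l}{p-1}+\tfrac{m}{(p-1)(q-1)}\big)$ and $\tfrac{2\pi m}{q-1}$ of the $\xt$- and $\yt$-components of $\vcpr{0}{l,m}$, and then evaluating at transport parameter $2\pi$. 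Just as \eqref{phieqn} yielded a monotone function in the loop case, $\psi$ here is strictly monotone in $s$ — strictly decreasing, precisely because $\theta_{l,m}-2\pi>0$ — interpolating between $\tfrac{2\pi m}{q-1}$ and $2\pi\tfrac{(p-1-l)(q-1)-m}{(p-1)(q-1)}$. With $1\leq m\leq q-2$ and $0\leq l\leq p-2$ one checks that $(p-1-l)(q-1)-m$ lies strictly between $0$ and $(p-1)(q-1)$, so both endpoints of $\psi$ lie strictly in $(0,2\pi)$; hence $\psi$ never meets $2\pi\Z$ and $V_1$ and $V_2$ cannot intersect in the neck.

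The main point requiring care — and the only place where this differs from \cref{lemCyclesDisjoint} in substance rather than notation — is the index-range bookkeeping: in the loop case the symmetry of $\theta_{l,m}$ made it immediate that the hypothesis forces $l>L$ and $m>M$, whereas here the corresponding fact, in particular $l-L\geq 0$, needs the slightly less transparent inequality above. Once that is settled, the parallel-transport computation, the monotonicity of $\psi$, and the endpoint estimates are a direct transcription of the loop argument, and I would expect the determination of the precise form and limits of $\psi$ to be where one is most prone to slip.
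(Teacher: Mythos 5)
Your proposal is correct and follows essentially the same route as the paper's proof: reduce to $(L,M)=(0,0)$ via $f_{L,M}^{-1}$ after checking the index constraints $l\geq L$, $m>M$ forced by $\theta_{l,m}>\theta_{L,M}+2\pi$, use $m\geq 1$ to confine any intersection to the $\vc{\xt\yt}{}$-neck, and there run the local parallel-transport computation to see that the $\yt$-argument of $V_1$ stays strictly inside $(0,2\pi)$, with the endpoints $\tfrac{2\pi m}{q-1}$ and $2\pi\bigl(1-\tfrac{l}{p-1}-\tfrac{m}{(p-1)(q-1)}\bigr)$ exactly as in the paper. Your version is in fact more detailed than the paper's brief argument (which leans on the loop case), and the extra bookkeeping you flag is handled correctly.
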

\begin{proof}
We must have $l \geq L$ and $m > M$, so we can apply $f_{L,M}^{-1}$ to get $(L,M) = (0,0)$ with $m > 0$.  The latter ensures that $V_1$ and $V_2$ are disjoint on $\Sigma' \subset \Sigma \approx \Sigma_z$, and that their only possible intersection is in the $\vc{\xt\yt}{}$-neck region.  On this region the argument of $\yt$ is approximately $0$ for $V_2$, and interpolates between
\[
2\pi \left( 1 - \frac{l}{p-1} - \frac{m}{(p-1)(q-1)} \right) \quad \text{and} \quad \frac{2\pi m}{q-1}
\]
for $V_1$, so they are disjoint there too.
\end{proof}

This allows us to introduce fingers to the vanishing paths $\gamma'_{l,m}$, as before, without affecting the vanishing cycles.  We then make Hamiltonian isotopies as in \cref{Isotopies} (but now only in the $\yt$-axis part of $\Sigma'$ and the $\vc{\xt\yt}{}$- and $\vc{\xt\fact}{m}$-necks) to obtain the final vanishing cycles.  This gives a model for $\mathcal{A}$ with the following basis of morphisms:
\begin{itemize}
\item An identity morphism for each object.
\item A morphism from $\vc{0}{l,m}$ to $\vc{0}{L,M}$ whenever $(l,m) \neq (L,M)$ but both $l \geq L$ and $m \geq M$.
\item A morphism from each $\vc{0}{l,m}$ to $\vc{\xt\yt}{}$ and to $\vc{\xt\fact}{m}$.
\end{itemize}
As in the loop case the differentials on morphism complexes trivially vanish so we are left to check compositions and gradings.

\subsection{Composition and gradings}

Once more we have one obvious triangle contributing to each non-trivial product, and by the same homology computation as for loop polynomials there can be no others.  We can also run the same inductive argument to ensure that all of the signs in the compositions are positive.

To grade the category we must again take the unique homotopy class of line field $\ell$ on $\Sigma$ whose winding number along each vanishing cycle $V$ is zero, and then pick a homotopy class of homotopy from $\ell|_V$ to $TV$.  By homotoping $\ell$ we may assume it points longitudinally in each neck region, orthogonal to the waist curves, and then up to homotopy it must look like the right-hand diagram in \cref{figLineField} in the union of the neck regions and the $\yt$-axis part of $\Sigma'$.  We can then define the gradings in the same way as in the loop case, and see that all morphisms then lie in degree $0$.

The conclusion is:

\begin{thm}[\cref{Thm1}, chain polynomial case]
Under the correspondence
\[
\begin{aligned}
\vc{0}{l,m} &\leftrightarrow \obj{0}{i,j}
\\ \vc{\xt\fact}{m} &\leftrightarrow \obj{y}{j}[3]
\\ \vc{\xt\yt}{} &\leftrightarrow \obj{\fac}{}[3]
\end{aligned}
\quad \text{with} \quad
\begin{aligned}
i+l&=p-1
\\ j+m&=q-1
\end{aligned}
\]
the $\Z$-graded $A_\infty$-category $\mathcal{A}$ is described by the quiver with relations in \cref{figChainQuiver} and is formal, so there is a quasi-equivalence
\begin{flalign*}
&& \mathrm{mf}(\C^2, \Gamma_\w, \w) &\simeq \mathcal{F}(\wt). & \qed
\end{flalign*}
\end{thm}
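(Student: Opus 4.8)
The plan is to reduce everything to \cref{ChainEndAlgebra}: I would show that the $\Z$-graded $A_\infty$-category $\mathcal{A}$ built from the final vanishing cycles of the previous subsection agrees, on cohomology, with the quiver algebra of \cref{figChainQuiver} and has all of its morphisms in degree $0$, after which formality and the desired quasi-equivalence follow formally. The geometric input --- the resonant Morsification $\wt_\eps = \wt - \eps\xt\yt$, the preliminary and modified vanishing paths, and the Hamiltonian isotopies producing the $\vc{0}{l,m}$ --- has already been assembled, so only the triangle products and the gradings remain. For the former: every morphism complex $CF^\bullet$ in the basis listed above is at most one-dimensional, so all Floer differentials vanish, and the only structure to pin down is the triangle product $HF^\bullet(L_1,L_2)\otimes HF^\bullet(L_0,L_1)\to HF^\bullet(L_0,L_2)$. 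By directedness this is computed combinatorially following Seidel, counting immersed triangular regions bounded by the three cycles; exactly as in the loop case there is one obvious such region --- lying in the $\vc{\xt\yt}{}$-neck when the target is $\vc{\xt\yt}{}$ or another $\vc{0}{r,s}$, and stretching into the $\vc{\xt\fact}{m}$-neck along the $\yt$-axis part of $\Sigma'$ when the target is of the form $\vc{\xt\fact}{m}$ --- and the nonexistence of others follows from the identical homology argument: $L_\cup$ is three pairwise-tangent circles, so its $H_1$ has rank four, and its image in $H_1(\Sigma;\Z)$ already contains the linearly independent classes $[L_0],[L_1],[L_2]$, whence $H_2(\Sigma,L_\cup;\Z)$ has rank at most one. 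Hence each product is the non-degenerate multiplication $e_{12}\otimes e_{01}\mapsto\pm e_{02}$, and the inductive sign normalisation --- on the length of a morphism, starting from the bottom-left square of \cref{figChainQuiver} --- makes all signs positive exactly as before.

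Next I would fix the gradings. As in \cref{Isotopies}, the grading of $\C^2$ given by $(\partial_{\xt}\wedge\partial_{\yt})^{\otimes 2}$ induces a grading of $\Sigma$, realised by the unique homotopy class of line field $\ell$ whose winding number along every vanishing cycle is zero; this is well-defined because the $\vc{0}{l,m}$, $\vc{\xt\fact}{m}$ and $\vc{\xt\yt}{}$ form a basis of $H_1(\Sigma;\Z)$. Homotoping $\ell$ to be longitudinal in each neck region forces it to look like the right-hand diagram of \cref{figLineField} on the union of the necks with the $\yt$-axis part of $\Sigma'$, and like the left-hand diagram on the $\{\fact=\eps\yt\}$ part. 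Grading each $\vc{0}{l,m}$ so that the homotopy from its tangent line to $\ell$ is approximately constant along its straight segments, and grading each waist curve with lift $\alpha^\# = -1/2$, the degree formula \eqref{eqDegree} places every morphism in degree $0$: for $\vc{0}{l,m}\to\vc{0}{L,M}$ one has $1/2 > \alpha_0^\# > \alpha_1^\# > 0$, and for the maps into the waist curves the relevant difference lies in $(-1,0)$.

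Putting this together, $\mathrm{H}(\mathcal{A})$ --- under the stated correspondence $\vc{0}{l,m}\leftrightarrow\obj{0}{i,j}$, $\vc{\xt\fact}{m}\leftrightarrow\obj{y}{j}[3]$, $\vc{\xt\yt}{}\leftrightarrow\obj{\fac}{}[3]$ with $i+l = p-1$ and $j+m = q-1$ --- is precisely the path algebra of the quiver with relations of \cref{figChainQuiver}, concentrated in degree $0$. Since $\mathcal{A}$ is directed and its morphisms are concentrated in degree zero, it is formal, hence quasi-equivalent to $\mathcal{B}$ by \cref{ChainEndAlgebra}. Feeding this into the chain of quasi-equivalences of \cref{sscOutline} --- Seidel's $\Tw\mathcal{A}\to\mathcal{F}(\wt)$ on the A-side, and $\Tw\mathcal{B}\to\mathrm{mf}(\C^2,\Gamma_\w,\w)$ on the B-side, the latter being a quasi-equivalence by the chain-case generation lemma and already idempotent complete because $\mathcal{B}$ is a full exceptional collection --- yields $\mathrm{mf}(\C^2,\Gamma_\w,\w)\simeq\mathcal{F}(\wt)$.

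I expect the only genuinely delicate point to be the bookkeeping of the argument interpolations on the $\vc{\xt\yt}{}$- and $\vc{\xt\fact}{m}$-neck regions, which underpins both the no-extra-triangle step and the degree-$0$ verification. But the chain-case interpolation formulas are the direct analogues of \eqref{eqArgProfile}, and $\Sigma'$ here has one fewer family of necks than in the loop case, so this is a simplification of an argument already carried out and no new ideas should be required.
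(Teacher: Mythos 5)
Your proposal is correct and follows essentially the same route as the paper: one obvious triangle per potential product with the loop-case homology argument (rank of $\mathrm{H}_1(L_\cup)$ versus the independence of the three vanishing-cycle classes) excluding others, the same inductive sign normalisation, and the same line-field/grading conventions placing all morphisms in degree $0$, after which formality and \cref{ChainEndAlgebra} give the quasi-equivalence. The only cosmetic differences are harmless (e.g.\ the extra remark about the line field on the $\{\fact=\eps\yt\}$ part, which the paper does not need since all intersection points lie in the neck regions).
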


This was also proved by Futaki--Ueda for $q=2$, as a special case of \cite[Theorem 1.2]{FutakiUedaD}.  They state the result at the level of derived categories, i.e.~after passing to cohomology, but as we have seen it is trivial to upgrade from this to the full $A_\infty$ result.


\section{Brieskorn--Pham polynomials}
\label{BrieskornPham}

\subsection{B-model}

Now $\w$ is given by $x^p + y^q$, and the maximal grading group $L$ is generated by $\vec{x}$, $\vec{y}$ and $\vec{c}$ modulo
\[
p\vec{x} = q\vec{y} = \vec{c},
\]
so is simply $\Z/p \oplus \Z/q$, generated by $\vec{x} = (1, 0)$ and $\vec{y} = (0, 1)$.  Let $S = k[x,y]$, graded by $L$ in the obvious way, and let $R = S/(\w)$.

The stack $[\w^{-1}(0)/\Gamma_\w]$ has only one component this time, and the objects that we need are the matrix factorisations $\obj{0}{i,j}$ given by
\begin{center}
	\begin{tikzcd}[row sep=7ex, column sep=10ex]
		S \arrow{r}{y^j} \arrow{dr}[near start, outer sep=-2pt]{-x^i} \ar[d, phantom, description, "\cdots\hskip7ex\bigoplus\phantom{\hskip7ex\cdots}"]
		&  S(j\vec{y}) \arrow{r}{y^{q-j}} \arrow{dr}[near start]{x^i} \ar[d, phantom, description, "\bigoplus"]
		& S(\vec{c}) \ar[d, phantom, description, "\phantom{\cdots\hskip7ex}\bigoplus\hskip7ex\cdots"]
		\\ S(-\vec{c}+i\vec{x}+j\vec{y}) \arrow{ur}[near start, outer sep=-2pt]{x^{p-i}} \arrow{r}{y^{q-j}}
		& S(i\vec{x}) \arrow{ur}[near start, outer sep=-2pt]{-x^{p-i}} \arrow{r}{y^j}
		& S(i\vec{x}+j\vec{y})
	\end{tikzcd}
\end{center}
for $i=1, \dots, p-1$ and $j=1, \dots, q-1$, stabilising $R(i\vec{x}+j\vec{y})/(x^i,y^j)$.

For any $(i,j)$ and $(I,J)$ the morphism space $\Hom^\bullet(\obj{0}{i,j}, \obj{0}{I,J})$ is computed by the complex
\begin{center}
	\begin{tikzcd}[row sep=7ex, column sep=10ex]
		(R/(x^I, y^J))_{-\vec{c}+I\vec{x}+J\vec{y}} \arrow{r}{y^{q-j}}\arrow{dr}[near start, outer sep=-2pt]{-x^{p-i}} \ar[d, phantom, description, "\cdots\hskip7ex\bigoplus\phantom{\hskip7ex\cdots}"]
		& (R/(x^I, y^J))_{I\vec{x}+(J-j)\vec{y}} \arrow{r}{y^j} \arrow{dr}[near start]{x^{p-i}} \ar[d, phantom, description, "\bigoplus"]
		& (R/(x^I, y^J))_{I\vec{x}+J\vec{y}} \ar[d, phantom, description, "\phantom{\cdots\hskip7ex}\bigoplus\hskip7ex\cdots"]
		\\ (R/(x^I, y^J))_{(I-i)\vec{x}+(J-j)\vec{y}} \arrow{ur}[near start, outer sep=-1pt]{x^i} \arrow{r}{y^j}
		& (R/(x^I, y^J))_{(I-i)\vec{x}+J\vec{y}} \arrow{ur}[near start, outer sep=-1pt]{-x^i} \arrow{r}{y^{q-j}}
		& (R/(x^I, y^J))_{\vec{c}+(I-i)\vec{x}+(J-j)\vec{y}}
	\end{tikzcd}
\end{center}
By considering gradings modulo $\vec{x}$ and modulo $\vec{y}$, one sees that the top row and the odd position terms in the bottom row vanish, and the remaining terms vanish if $I<i$ or $J<j$.  We therefore assume that $I\geq i$ and $J\geq j$, and read off that $\Hom^{2m+1}(\obj{0}{i,j}, \obj{0}{I,J})=0$ and
\[
\Hom^{2m}(\obj{0}{i,j}, \obj{0}{I,J}) \cong (R/(x^I, y^J))_{(I-i)\vec{x}+(J-j)\vec{y}}.
\]
Arguing as in the loop and chain cases, this is spanned by $x^{I-i}y^{J-j}$.

The full $A_\infty$-subcategory of $\mathrm{mf}(\C^2, \Gamma_\w, \w)$ on the objects $\obj{0}{i,j}$ is therefore described by the quiver with relations in \cref{figBrieskornPhamQuiver}, and is formal as before.
\begin{figure}[ht]
\centering
\begin{tikzpicture}[blob/.style={circle, draw=black, fill=black, inner sep=0, minimum size=\blobsize}, arrow/.style={->, shorten >=6pt, shorten <=6pt}]
\def\blobsize{1.2mm}

\draw (0, 0) node[blob]{};
\draw (1, 0) node[blob]{};
\draw (2, 0) node[blob]{};
\draw (3.25, 0) node{\ $\cdots$};
\draw (4.5, 0) node[blob]{};

\draw[arrow] (0, 0) -- (1, 0);
\draw[arrow] (1, 0) -- (2, 0);
\draw[arrow] (2, 0) -- (3, 0);
\draw[arrow] (3.5, 0) -- (4.5, 0);

\draw[arrow] (0, 0) -- (0, 1);
\draw[arrow] (1, 0) -- (1, 1);
\draw[arrow] (2, 0) -- (2, 1);
\draw[arrow] (4.5, 0) -- (4.5, 1);

\begin{scope}[yshift=1cm]
\draw (0, 0) node[blob]{};
\draw (1, 0) node[blob]{};
\draw (2, 0) node[blob]{};
\draw (3.25, 0) node{\ $\cdots$};
\draw (4.5, 0) node[blob]{};

\draw[arrow] (0, 0) -- (1, 0);
\draw[arrow] (1, 0) -- (2, 0);
\draw[arrow] (2, 0) -- (3, 0);
\draw[arrow] (3.5, 0) -- (4.5, 0);

\draw[arrow] (0, 0) -- (0, 1);
\draw[arrow] (1, 0) -- (1, 1);
\draw[arrow] (2, 0) -- (2, 1);
\draw[arrow] (4.5, 0) -- (4.5, 1);
\end{scope}

\begin{scope}[yshift=3.5cm]
\draw (0, 0) node[blob]{};
\draw (1, 0) node[blob]{};
\draw (2, 0) node[blob]{};
\draw (3.25, 0) node{\ $\cdots$};
\draw (4.5, 0) node[blob]{};

\draw[arrow] (0, 0) -- (1, 0);
\draw[arrow] (1, 0) -- (2, 0);
\draw[arrow] (2, 0) -- (3, 0);
\draw[arrow] (3.5, 0) -- (4.5, 0);
\end{scope}

\draw (0, 2.37) node{$\vdots$};
\draw (1, 2.37) node{$\vdots$};
\draw (2, 2.37) node{$\vdots$};
\draw (4.5, 2.37) node{$\vdots$};

\begin{scope}[yshift=2.5cm]
\draw[arrow] (0, 0) -- (0, 1);
\draw[arrow] (1, 0) -- (1, 1);
\draw[arrow] (2, 0) -- (2, 1);
\draw[arrow] (4.5, 0) -- (4.5, 1);
\end{scope}

\draw (3.25, 2.37) node{\ $\iddots$};

\draw[line width=1mm, opacity=0.2] (-0.5, -0.5) rectangle (5, 4);

\draw (-1.2, 2) node{$\obj{0}{i,j}$};

\draw (8, 2) node{\parbox{115pt}{\small \ \textbf{Relations:} \begin{enumerate}[(i)]\item Squares commute\end{enumerate}}};

\end{tikzpicture}
\caption{The quiver describing the category $\mathcal{B}$ for Brieskorn--Pham polynomials.\label{figBrieskornPhamQuiver}}
\end{figure}
This is the tensor product of the $A_{p-1}$ and $A_{q-1}$ quivers, which describe the one-variable graded matrix factorisations of $x^p$ and $y^q$ respectively.

To prove these objects generate we just need to check that we can build all $L/\Z\vec{c}$-shifts of $R/(x,y)$ from them.  One easily constructs $R(a\vec{x}+b\vec{y})/(x,y)$ for $a=1, \dots, p-1$, $b=1, \dots, q-1$ by taking cones on these generators as in the previous cases.  To construct the remaining shifts, note that the modules $R(i\vec{x}+j\vec{y})/(x^i,y^j)$ and $R(\vec{c})/(x^{p-i},y^{q-j})$ are isomorphic in the singularity category, as they give rise to equivalent matrix factorisations.  Taking $i=p-1$ and $j=q-1, q-2, \dots, 1$ in turn, we can inductively build the $a=0$ shifts from $R(\vec{c})/(x^{p-i}, q^{q-j})$.  Reversing the roles of $x$ and $y$ gives the remaining shifts.  In contrast to \cref{rmkGenVsSplit} and \cref{rmkGenVsSplitChain}, the $R(l)/(x,y)$ now generate the category, rather than just split-generate.

We conclude the following well-known result, which goes back to at least \cite[Theorem 6]{FutakiUedaBrieskornPhamProceedings}, \cite[Theorem 1.2]{FutakiUedaBrieskornPhamJournal}:

\begin{thm}[\cref{Thm2}, Brieskorn--Pham polynomial case]
The object
\[
\mathcal{E} \coloneqq \bigoplus_{\substack{i=1, \dots, p-1\\j=1, \dots, q-1}} \obj{0}{i,j}
\]
is a tilting object for $\mathrm{mf}(\C^2, \Gamma_\w, \w)$.\hfill$\qed$
\end{thm}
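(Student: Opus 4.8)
The plan is to follow the template of the loop and chain cases verbatim, since in the Brieskorn--Pham situation both ingredients of the tilting condition are even easier to assemble. Recall that $\mathcal{E}$ is tilting precisely when $\End^i(\mathcal{E}) = 0$ for all $i \neq 0$ and $\hom^\bullet(\mathcal{E}, X) \simeq 0$ forces $X \cong 0$.

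For the first condition I would simply quote the morphism calculation just completed. We have $\End^\bullet(\mathcal{E}) = \bigoplus \Hom^\bullet(\obj{0}{i,j}, \obj{0}{I,J})$, and the computation above shows each summand is $\C \cdot x^{I-i}y^{J-j}$ concentrated in degree $0$ (when $I \geq i$ and $J \geq j$) and zero otherwise; equivalently, the cohomology-level endomorphism algebra is the path algebra of the quiver in \cref{figBrieskornPhamQuiver}, the tensor product $A_{p-1} \otimes A_{q-1}$, with all arrows in degree zero. Hence $\End^i(\mathcal{E}) = 0$ for $i \neq 0$, and as in \cref{LoopEndAlgebra} the whole $A_\infty$-structure is formal, though we do not even need formality here.

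For the second condition the key input is \cref{PVGeneration}: the $L$-grading shifts of the stabilisation of $R/(x,y)$ split-generate $\mathrm{HMF}(\C^2, \Gamma_\w, \w)$. Under the equivalence \eqref{HMFsingEquivalence} it therefore suffices to show that the triangulated subcategory $\langle V \rangle$ generated by the modules $R(i\vec{x}+j\vec{y})/(x^i,y^j)$ contains $R(l)/(x,y)$ for all $l$ in a set of representatives of $L/\Z\vec{c}$ (using that $[2] \simeq (\vec{c})$). I would first build $R(a\vec{x}+b\vec{y})/(x,y)$ for $1 \leq a \leq p-1$ and $1 \leq b \leq q-1$ by taking iterated cones on the generators, exactly as in \cref{BGenerates}; the only subtlety is that when $a$ or $b$ reaches its extremal value the auxiliary modules appearing as domains vanish, so the relevant cone is just a cokernel lying in $\langle V \rangle$. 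To reach the remaining shifts (those with $a = 0$ or $b = 0$) I would use the fact that $R(i\vec{x}+j\vec{y})/(x^i,y^j)$ and $R(\vec{c})/(x^{p-i},y^{q-j})$ stabilise to the same matrix factorisation up to shift, hence are isomorphic in the singularity category; running an induction on $j$ with $i = p-1$ fixed produces the $a=0$ shifts, and the symmetric argument with the roles of $x$ and $y$ exchanged produces the $b=0$ shifts. Having thus built a full set of representatives of $L/\Z\vec{c}$, any $X$ with $\hom^\bullet(\mathcal{E},X) \simeq 0$ admits no non-zero morphisms from $\langle V \rangle$ in $\mathrm{HMF}(\C^2, \Gamma_\w, \w)$, which by split-generation forces $X \cong 0$.

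The main obstacle is nothing more than the cone bookkeeping of the previous paragraph: checking that each cone genuinely yields the asserted shift of $R/(x,y)$ and that the boundary cases degenerate correctly. This is strictly simpler than the loop case handled in \cref{BGenerates}, where the extra generators $\obj{x}{i}$, $\obj{y}{j}$, $\obj{\fac}{}$ had to be fed in; with a single component of $\w^{-1}(0)$ the argument collapses, and indeed --- in contrast to \cref{rmkGenVsSplit} and \cref{rmkGenVsSplitChain} --- the $R(l)/(x,y)$ here generate rather than merely split-generate, so one could even avoid invoking idempotent completion. Everything else is immediate from the computations already in hand.
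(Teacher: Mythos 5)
Your proposal is correct and follows essentially the same route as the paper: the vanishing of $\End^i(\mathcal{E})$ for $i\neq 0$ is read off from the degree-zero quiver computation (\cref{figBrieskornPhamQuiver}), and generation is established via \cref{PVGeneration} by building all $L/\Z\vec{c}$-shifts of $R/(x,y)$ --- first the shifts with $1\leq a\leq p-1$, $1\leq b\leq q-1$ by cones as in \cref{BGenerates}, then the remaining ones using the singularity-category isomorphism $R(i\vec{x}+j\vec{y})/(x^i,y^j)\cong R(\vec{c})/(x^{p-i},y^{q-j})$ with the same induction ($i=p-1$, descending $j$) and the $x\leftrightarrow y$ symmetry. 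Your closing observation that the $R(l)/(x,y)$ genuinely generate (not merely split-generate) in this case is also exactly the paper's remark.
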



\subsection{A-model}
\label{BPAModel}

We consider the resonant Morsification $\wt_\eps \coloneqq \xt^p + \yt^q - \eps \xt\yt$ of the Berglund--H\"ubsch transpose $\wt = \xt^p + \yt^q$.  The critical points are:
\begin{enumerate}[(i)]
\item $\xt=\yt=0$
\item\label{crit2BP} $\xt^{p-1} = \frac{\eps \yt}{p}$, $\yt^{q-1} = \frac{\eps \xt}{q}$.
\end{enumerate}
These are Morse, with critical values $0$ and $-\xt\yt\eps(pq-p-q)/pq$ respectively.  The equations (\ref{crit2BP}) reduce to
\[
\xt^{(p-1)(q-1)-1} = \frac{\eps^q}{p^{q-1}q} \quad \text{and} \quad \yt = \frac{p\xt^{p-1}}{\eps},
\]
so there is a unique positive real solution $(\xt^+_\mathrm{crit}, \yt^+_\mathrm{crit})$ whose critical value we denote by $c_\mathrm{crit}$ as before.  All other critical points differ by the action of $(pq-p-q)$th roots of unity with weights $(q-1, 1)$, or equivalently $(1, p-1)$, on $(\xt,\yt)$.  We parametrise these critical points, and the associated vanishing paths and cycles, by
\[
(l,m) \in \left(\{0, \dots, p-2\} \times \{0, \dots, q-2\}\right) \setminus \{(p-2, q-2)\}
\]
as
\[
(\mu^{(q-1)l+m} \xt^+_\mathrm{crit}, \mu^{l+(p-1)m} \yt^+_\mathrm{crit}),
\]
where $\mu = e^{2\pi i/(pq-p-q)}$.

The fibre $\wt_\eps^{-1}(0)$ is shown in \cref{figZeroFibreBP}.
\begin{figure}[ht]
\centering
\begin{tikzpicture}[blob/.style={cross out, draw=black, fill=black, inner sep=0, minimum size=\blobsize, line width=0.5mm}, noblob/.style={inner sep=0, minimum size=0}]
\def\blobsize{1.5mm}

\begin{scope}[yshift=2cm]
\draw plot [smooth, tension=0.6] coordinates {(5, 2.5) (4, 2) (1.3, 1.5) (0.2, 1.2) (-0.2, 0.7) (0,0) (0.6, 0.5) (1.5,0) (0.6, -0.5) (0, 0) (-0.2, -0.7) (0.2, -1.2) (1.3, -1.2) (4, -0.5) (6.5, -0.5)};
\end{scope}

\draw plot [smooth, tension=1] coordinates {(5.8, 4.5) (4.6, 3.3) (6.5, 3.9)};
\draw plot [smooth, tension=1] coordinates {(6.5, 3.2) (5, 2.4) (6.5, 2.2)};

\begin{scope}[xshift=1.7cm, yshift=3.2cm, rotate=-20]
\draw (0, 0) to [bend left=30] (1, 0);
\draw (-0.1, 0.075) to [bend right=10] (0, 0) to [bend right=20] (1, 0) to [bend right=10] (1.1, 0.075);
\end{scope}

\begin{scope}[xshift=3.5cm, yshift=3.1cm, rotate=-20]
\draw (0, 0) to [bend left=30] (1, 0);
\draw (-0.1, 0.075) to [bend right=10] (0, 0) to [bend right=20] (1, 0) to [bend right=10] (1.1, 0.075);
\end{scope}

\begin{scope}[xshift=2.8cm, yshift=2.4cm, rotate=-20]
\draw (0, 0) to [bend left=30] (1, 0);
\draw (-0.1, 0.075) to [bend right=10] (0, 0) to [bend right=20] (1, 0) to [bend right=10] (1.1, 0.075);
\end{scope}

\draw (2.3, 1.7) node{$\fact=\eps\xt\yt$};

\draw (0, 2cm) node[blob]{};

\end{tikzpicture}
\caption{The fibre $\wt_\eps^{-1}(0)$ for Brieskorn--Pham polynomials.\label{figZeroFibreBP}}
\end{figure}
This time it is irreducible.  At infinity the defining equation looks like $\xt^p+\yt^q=0$ so the smooth fibre $\Sigma = \wt_\eps^{-1}(-\delta)$ satisfies
\begin{gather*}
\# \text{ punctures of }\Sigma = \gcd(p,q)
\\ g(\Sigma) = \frac{1}{2}\left((p-1)(q-1)-\gcd(p,q)+1\right).
\end{gather*}

We divide $\Sigma$ into $\Sigma'$ and a single neck region, and trivialise the fibration on $\Sigma'$ over a small disc.  We define preliminary vanishing paths and cycles $\vc{\xt\yt}{}$ and $\vcpr{0}{l,m}$ as usual, taking
\[
\theta_{l,m} = \frac{2\pi(ql + pm)}{pq-p-q}.
\]
Note that by our bounds on $l$ and $m$ this lies in $[0, 4\pi)$.  The cycle $\vc{\xt\yt}{}$ is the waist curve on the neck, whilst $\vcpr{0}{0,0}$ lives in the positive quadrant of the real part of $\Sigma$.  The other $\vcpr{0}{l,m}$ are obtained from $\vcpr{0}{0,0}$ by the action of roots of unity on $\Sigma'$ and by a local parallel transport computation on the neck.  In particular, all intersections between the vanishing cycles occur on the neck.  We modify the vanishing paths (and correspondingly perturb the fibration), introducing fingers to remove their intersections, in the familiar way.

There is now no need to isotope the cycles further, since they are already all transverse.  In particular, on the $\vc{\xt\yt}{}$-neck the argument of $\xt$ along $\vcpr{0}{l,m}$ interpolates from
\[
-2\pi\frac{l+(p-1)m}{pq-p-q} \quad \text{to} \quad 2\pi\frac{(q-1)l+m}{pq-p-q}
\]
as its modulus increases.  The intersection pattern is thus described by the morphisms in the quiver \cref{figBrieskornPhamQuiver2}, in the sense that the number of intersections between two curves is the dimension of the corresponding morphism space; the $l$ and $m$ indices decrease from bottom left to top right.
\begin{figure}[ht]
\centering
\begin{tikzpicture}[blob/.style={circle, draw=black, fill=black, inner sep=0, minimum size=\blobsize}, arrow/.style={->, shorten >=6pt, shorten <=6pt}]
\def\blobsize{1.2mm}

\draw (1, 0) node[blob]{};
\draw (2, 0) node[blob]{};
\draw (3.25, 0) node{\ $\cdots$};
\draw (4.5, 0) node[blob]{};

\draw[arrow] (1, 0) -- (2, 0);
\draw[arrow] (2, 0) -- (3, 0);
\draw[arrow] (3.5, 0) -- (4.5, 0);

\draw[arrow] (1, 0) -- (1, 1);
\draw[arrow] (2, 0) -- (2, 1);
\draw[arrow] (4.5, 0) -- (4.5, 1);

\begin{scope}[yshift=1cm]
\draw (0, 0) node[blob]{};
\draw (1, 0) node[blob]{};
\draw (2, 0) node[blob]{};
\draw (3.25, 0) node{\ $\cdots$};
\draw (4.5, 0) node[blob]{};

\draw[arrow] (0, 0) -- (1, 0);
\draw[arrow] (1, 0) -- (2, 0);
\draw[arrow] (2, 0) -- (3, 0);
\draw[arrow] (3.5, 0) -- (4.5, 0);

\draw[arrow] (0, 0) -- (0, 1);
\draw[arrow] (1, 0) -- (1, 1);
\draw[arrow] (2, 0) -- (2, 1);
\draw[arrow] (4.5, 0) -- (4.5, 1);
\end{scope}

\begin{scope}[yshift=3.5cm]
\draw (0, 0) node[blob]{};
\draw (1, 0) node[blob]{};
\draw (2, 0) node[blob]{};
\draw (3.25, 0) node{\ $\cdots$};
\draw (4.5, 0) node[blob]{};

\draw[arrow] (0, 0) -- (1, 0);
\draw[arrow] (1, 0) -- (2, 0);
\draw[arrow] (2, 0) -- (3, 0);
\draw[arrow] (3.5, 0) -- (4.5, 0);
\end{scope}

\draw (0, 2.37) node{$\vdots$};
\draw (1, 2.37) node{$\vdots$};
\draw (2, 2.37) node{$\vdots$};
\draw (4.5, 2.37) node{$\vdots$};

\begin{scope}[yshift=2.5cm]
\draw[arrow] (0, 0) -- (0, 1);
\draw[arrow] (1, 0) -- (1, 1);
\draw[arrow] (2, 0) -- (2, 1);
\draw[arrow] (4.5, 0) -- (4.5, 1);
\end{scope}

\draw (3.25, 2.37) node{\ $\iddots$};

\draw[line width=1mm, opacity=0.2] (-0.5, -0.5) rectangle (5, 4);

\draw (-1.2, 2) node{$\vc{0}{l,m}$};

\draw (8, 2) node{\parbox{115pt}{\small \ \textbf{Relations:} \begin{enumerate}[(i)]\item Squares commute\end{enumerate}}};

\draw[arrow] (4.5, 3.5) -- (5.5, 4.5);
\draw (5.5, 4.5) node[blob]{};
\draw (6.1, 4.5) node{$\vc{\xt\yt}{}$};

\end{tikzpicture}
\caption{The quiver describing the intersection pattern.\label{figBrieskornPhamQuiver2}}
\end{figure}
This is not quite the pattern we want, but this can be rectified as follows.  Recall that the ordering of the cycles is determined by the clockwise ordering of the directions of their vanishing paths as they emanate from the reference base point $-\delta$.  We have so far been starting the ordering from the direction $e^{i\theta}$ for $0 < \theta \ll 2\pi$, but we now change this to $e^{-i\theta}$.  This has the effect of moving $\vc{\xt\yt}{}$ from last to first in the ordering, and hence modifying the quiver from \cref{figBrieskornPhamQuiver2} to \cref{figBrieskornPhamQuiver}.

\begin{rmk}
Alternatively, one can leave the starting direction as $e^{i\theta}$ and instead replace the indexing set
\[
\left(\{0, \dots, p-2\} \times \{0, \dots, q-2\}\right) \setminus \{(p-2, q-2)\},
\]
over which $(l,m)$ ranges, by
\[
\left(\{0, \dots, p-2\} \times \{0, \dots, q-2\}\right) \setminus \{(0, 0)\}.
\]
This moves the top right vertex inside the rectangle in \cref{figBrieskornPhamQuiver2} to the bottom left.  Now $\theta_{l,m}$ lies in $(0, 4\pi]$, rather than $[0, 4\pi)$, so the prescription given at the start of \cref{ModifyingPaths} has to be modified so that $\gamma'_{l,m}$ is described in modulus-(argument$+\pi$) space by the piecewise linear path:
\begin{itemize}
\item From $(\delta, 0)$ to $(\delta+\delta', \theta_{l,m})$ to $(-c_\mathrm{crit}, \theta_{l,m})$ for some small positive $\delta'$, if $\theta_{l,m} \leq 2\pi$.
\item From $(\delta, 0)$ to $(\delta+\delta', 2\pi+\lambda\theta_{l,m})$ to $(\delta+2\delta', 2\pi+\lambda\theta_{l,m})$ to $(\delta+3\delta', \theta_{l,m}-\theta'))$ to $(-c_\mathrm{crit}, \theta_{l,m}-\theta')$ for some small positive $\lambda$ and $\theta'$, if $\theta_{l,m} > 2\pi$.
\end{itemize}
Note that the inequalities $< 2\pi$ and $\geq 2\pi$ have become $\leq 2\pi$ and $> 2\pi$, whilst the $\lambda(\theta_{l,m}-4\pi)$ terms have become $\lambda\theta_{l,m}$, so that the short horizontal segments in \cref{figGammap} are pushed slightly above the dashed $2\pi$ line.
\end{rmk}

Compositions are non-degenerate by the standard argument, and we can arrange all signs to be positive.  To fix gradings we take the unique homotopy class of line field $\ell$ on $\Sigma$ with respect to which all vanishing cycles are gradable.  We may assume $\ell$ is longitudinal on the neck, and equip the $\vc{0}{l,m}$ with the standard gradings (we choose the lift $\alpha^\#$ to be approximately between $0$ and $1/2$).  We previously gave $\vc{\xt\yt}{}$ the grading with $\alpha^\# = -1/2$, but now that we have changed the ordering we should choose $\alpha^\# = 1/2$ to put all morphisms in degree $0$.

We arrive at the following result Futaki--Ueda \cite[Theorem 5]{FutakiUedaBrieskornPhamProceedings}, \cite[Theorem 1.3]{FutakiUedaBrieskornPhamJournal}:

\begin{thm}[\cref{Thm1}, Brieskorn--Pham polynomial case]
Under the correspondence
\[
\begin{aligned}
\vc{0}{l,m} &\leftrightarrow \obj{0}{i,j}
\\ \vc{\xt\yt}{} &\leftrightarrow \obj{0}{1,1}
\end{aligned}
\quad \text{with} \quad
\begin{aligned}
i+l&=p-1
\\ j+m&=q-1
\end{aligned}
\]
the $\Z$-graded $A_\infty$-category $\mathcal{A}$ is described by \cref{figBrieskornPhamQuiver} and is formal, so there is a quasi-equivalence
\begin{flalign*}
&& \mathrm{mf}(\C^2, \Gamma_\w, \w) &\simeq \mathcal{F}(\wt). & \qed
\end{flalign*}
\end{thm}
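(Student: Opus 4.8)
The plan is to assemble the geometric facts established earlier in this section into the quiver description and then feed this through the B-side computation together with Seidel's structure theorem. On the A-side we already have, after adding fingers to the preliminary vanishing paths and taking the clockwise ordering to start from $e^{-i\theta}$, a distinguished basis of vanishing cycles for $\wt_\eps$ realising the arrows of \cref{figBrieskornPhamQuiver} as geometric intersections. First I would observe that since all of these cycles are pairwise transverse and every Floer complex $CF^\bullet$ between distinct objects is at most one-dimensional, the differentials vanish identically and the cohomology-level $\Hom$-spaces are exactly as drawn. By directedness the triangle products may be computed combinatorially, and the rank-one bound on $\mathrm{H}_2(\Sigma, L_\cup; \Z)$ (argued just as in the loop case) shows that each composable triple bounds a unique triangle, lying in the single neck region, so every composition is non-degenerate; the inductive sign-fixing argument from the loop case, sweeping across the small squares of \cref{figBrieskornPhamQuiver} and reversing signs to make each square commute, makes all structure constants equal to $+1$.

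Next I would record the gradings. Taking $\ell$ to be the unique line field on $\Sigma$ of winding number zero along every vanishing cycle, homotoped to be longitudinal on the neck, I would grade each $\vc{0}{l,m}$ with lift $\alpha^\# \in (0, \tfrac12)$ and — this is the one place where the change of starting direction matters — grade $\vc{\xt\yt}{}$ with $\alpha^\# = \tfrac12$ rather than $-\tfrac12$, so that formula \eqref{eqDegree} puts every arrow in degree $0$. Since $\mathcal{A}$ is then a directed $A_\infty$-category with morphisms concentrated in degree $0$, there is no room for nontrivial higher products and $\mathcal{A}$ is formal, quasi-equivalent to the path algebra of \cref{figBrieskornPhamQuiver} with its commutation relations.

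Finally I would close the loop: the B-side computation above identifies the full subcategory $\mathcal{B} \subset \mathrm{mf}(\C^2, \Gamma_\w, \w)$ on the $\obj{0}{i,j}$ with the same path algebra, also formal, and shows that the $\obj{0}{i,j}$ generate (indeed the $R(l)/(x,y)$ generate, not merely split-generate, so $\Tw\mathcal{B} \to \mathrm{mf}(\C^2, \Gamma_\w, \w)$ is already a quasi-equivalence without passing to an idempotent completion). Combining this with Seidel's quasi-equivalence $\Tw\mathcal{A} \to \mathcal{F}(\wt)$ \cite[Theorem 18.24]{SeidelBook} yields the chain $\mathcal{F}(\wt) \simeq \Tw\mathcal{A} \simeq \Tw\mathcal{B} \simeq \mathrm{mf}(\C^2, \Gamma_\w, \w)$ under the stated correspondence, which is the theorem. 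The only genuinely new ingredient relative to the loop and chain arguments — and hence the step I would take the most care over — is the bookkeeping of the change of starting direction, which simultaneously converts \cref{figBrieskornPhamQuiver2} into \cref{figBrieskornPhamQuiver} and shifts the grading of $\vc{\xt\yt}{}$; every other ingredient here is a strict simplification of computations already carried out.
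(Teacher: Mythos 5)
Your proposal is correct and follows essentially the same route as the paper: transversality of the final cycles with at most one intersection point per pair, combinatorial triangle counts via the rank-one bound on $\mathrm{H}_2(\Sigma, L_\cup;\Z)$, the inductive sign normalisation, the line field grading with $\alpha^\#=\tfrac12$ on $\vc{\xt\yt}{}$ after switching the starting direction to $e^{-i\theta}$, formality from directedness in degree $0$, and the comparison with the B-side exceptional collection plus Seidel's theorem. The only cosmetic difference is that you invoke genuine (rather than split-) generation by the $R(l)/(x,y)$ to avoid idempotent completion, whereas the paper can equally well fall back on idempotent completeness of $\Tw\mathcal{B}$ coming from the full exceptional collection; both close the argument in the same way.
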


\bibliography{LoopPolyBibliography}
\bibliographystyle{utcapsor2}

\end{document}